\def\HH{\EuFrak H}
\def\RR{\mathbb R}
\def\R{\mathbb R}
\def\E{\mathbb E}
\def\EE{\mathbb E}
\def\DD{\mathbb D}
\def\Z{\mathbb Z}
\def\Var{{\rm Var}}
\def\dom{{\rm Dom}}
\newtheorem{prop}{Proposition}[section]
\newtheorem{proposition}{Proposition}[section]
\newtheorem{lemma}[prop]{Lemma}
\newtheorem{theorem}[prop]{Theorem}
\newtheorem{remark}[prop]{Remark}
\numberwithin{equation}{section}
\begin{document}
   
   \title[Rate of convergence in the BM theorem]{Rate of convergence in the Breuer-Major theorem via chaos expansions}

 \author[S. Kuzgun]{Sefika Kuzgun}
\address{University of Kansas, Department of Mathematics, USA}
\email{sefika.kuzgun@ku.edu}

\author[D. Nualart]{David Nualart} \thanks {D.\ Nualart is supported by the  NSF Grant DMS 1811181.}
\address{University of Kansas, Department of Mathematics, USA}
\email{nualart@ku.edu}

\begin{abstract}
We show new estimates for the total variation and Wasserstein distances in the framework of the
Breuer-Major theorem. The results are based  on the combination of Stein's method for normal approximations and Malliavin calculus together with
Wiener chaos expansions.

\medskip\noindent
{\bf Mathematics Subject Classifications (2010)}: 	60H15, 60H07, 60G15, 60F05. 
\end{abstract}

\maketitle

   \section{Introduction}
 Suppose that $X=\{ X_n , n\ge 0\}$  is a centered stationary Gaussian  sequence of random variables     with unit variance.
  For all $k \in \Z$, set $\rho(k) = \E(X_0 X_k)$ if $k\ge 0$ and $\rho(k) = \rho(-k)$ if $k<0$. 
   We say that a function $g \in L^2( \mathbb{R}, \gamma)$, where $\gamma$ is the standard Gaussian measure, has  
  {\it Hermite rank}   $d\ge 1$ if
	\begin{equation}\label{hexp}
g(x)= \sum_{q=d} ^\infty c_q  H_q(x),
\end{equation}
 where $c_d \not =0$ and $H_q$ is the $q$-th Hermite polynomial. We will  make use of    the following condition that relates the covariance function $\rho$ to the Hermite rank of a function $g$:
 \begin{equation} \label{h1}
\sum_{j \in \Z} |\rho(j)|^d < \infty.
\end{equation}
   The Breuer-Major theorem  (see \cite{bm}) says that, under condition (\ref{h1}), 
    the sequence
	  \begin{equation}\label{yn}
	  F_n := \frac{1}{\sqrt{n}} \sum_{i=1}^n g(X_i) 
	  \end{equation}
  converges in law to the  normal distribution $N(0, \sigma^2)$, where
	\begin{equation}\label{bm.sig}
		\sigma^2 = \sum_{q=d}^\infty q! c_q^2 \sum_{k \in \Z} \rho(k)^q.
	\end{equation}
 
   The aim of this paper is to  estimate the rate of convergence to zero  of the total variation and Wasserstein distances between  the normalized  sequence  
   \begin{equation} \label{yn}
   Y_n:=\frac {F_n } {\sqrt{\Var (F_n)} } 
   \end{equation}
and  the standard normal law $N(0,1)$, assuming minimal regularity and integrability  conditions on the function $g$.
To show these results we will apply a combination of Stein's method for normal approximations and techniques of Malliavin calculus, and we will make use of the Wiener chaos expansion of the random variable $F_n$. The combination of Stein's method with Malliavin calculus to study normal approximations was first developed by Nourdin and Peccati  (see the pioneering work \cite{np-ptrf} and the monograph  \cite{np-book}). For random variables on a fixed Wiener chaos, these techniques provide a quantitative version of the Fourth Moment Theorem proved by Nualart and Peccati in \cite{nunugio}.  
  
Given a function $g \in L^2( \mathbb{R}, \gamma)$ with expansion (\ref{hexp}), we denote by $A(g)$ the function in $ L^2( \mathbb{R}, \gamma)$, whose Hermite coefficients are the absolute values of the coefficients of $g$, that is,
\begin{equation} \label{ma2}
A(g)(x) =\sum_{q=d} ^\infty |c_q| H_q(x).
\end{equation}
For any integer $k\ge 1$ and any real $p\ge 1$, we denote by $\DD^{k,p}(\R,\gamma)$ the Sobolev space of functions which are $k$ times weakly differentiable, such that together with their derivatives  up to order  $k$, they have finite moments of order $p$ with respect to the measure $\gamma$.
Also, we denote by $   d_{\rm TV}$ and $d_{\rm W}$ the total variation and Wasserstein distances, respectively. Along the paper, $Z$ will denote a $N(0,1)$ random variable.
Our first result is the following.

\begin{theorem} \label{thm1}
Assume that $g\in L^2(\RR,\gamma)$ has Hermite rank $d\ge 2$ and  satisfies $A(g)\in \DD^{1,4}(\RR,\gamma)$. Suppose that   (\ref{h1}) holds true and
let $Y_n$ be the random variable defined in (\ref{yn}).  Then we have the following estimates:
\begin{itemize}
\item[(i)] If $d=2$, then
	\begin{align} \label{ecu1}
				   d_{\rm TV}(Y_n , Z)  \leq  C n^{-\frac{1}{2}} \left(\sum_{|k| \leq n} |\rho(k)|\right)^{\frac{1}{2}}  + C n^{-\frac{1}{2}} \left(\sum_{|k| \leq n} |\rho(k)|^{\frac{4}{3}}\right)^{\frac 32 } \,.
		\end{align}
\item[(ii)] If $d\ge 3$, we have
		   \begin{align} 
		   d_{\rm TV} (Y_n ,Z) &\le C n^{-\frac{1}{2}} \sum_{|k| \leq n} |\rho(k)|^{d-1} \left(\sum_{|k| \leq n} |\rho(k)|^{2}\right)^{\frac{1}{2}} \notag\\
		& \quad +    C n^{-\frac{1}{2}} \left(\sum_{|k|\leq n}|\rho(k)|^2\right)^{\frac{1}{2}} \left(\sum_{|k|\leq n}|\rho(k)|\right)^{\frac{1}{2}}\label{ecu2}\,.
\end{align}
\end{itemize}
\end{theorem}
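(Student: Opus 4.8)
The plan is to combine the Malliavin--Stein bound for the total variation distance with the Wiener chaos expansion of $F_n$ and with quantitative estimates of contraction norms. Realize $X$ inside an isonormal Gaussian process $W$ over a real separable Hilbert space $\mathfrak H$, choosing $\{e_i\}_{i\ge1}\subset\mathfrak H$ with $\langle e_i,e_j\rangle_{\mathfrak H}=\rho(i-j)$ and $X_i=W(e_i)$. Since $H_q(W(e_i))=I_q(e_i^{\otimes q})$, the expansion \eqref{hexp} gives
\[
F_n=\sum_{q\ge d}I_q(f_{n,q}),\qquad f_{n,q}=\frac{c_q}{\sqrt n}\sum_{i=1}^{n}e_i^{\otimes q}.
\]
Write $\sigma_n^2=\Var(F_n)$; a direct covariance computation gives $\sigma_n^2\to\sigma^2$, which is strictly positive, so $\sigma_n^2$ stays bounded away from $0$ for $n$ large. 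Using the general estimate $d_{\rm TV}(Y_n,Z)\le 2\sqrt{\Var(\langle DY_n,-DL^{-1}Y_n\rangle_{\mathfrak H})}$ (see \cite{np-book}) together with $\langle DY_n,-DL^{-1}Y_n\rangle_{\mathfrak H}=\sigma_n^{-2}\langle DF_n,-DL^{-1}F_n\rangle_{\mathfrak H}$, it suffices to bound $\Var(\langle DF_n,-DL^{-1}F_n\rangle_{\mathfrak H})$ by the square of the right-hand side of \eqref{ecu1} when $d=2$, or of \eqref{ecu2} when $d\ge3$.

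Next I would expand $\langle DF_n,-DL^{-1}F_n\rangle_{\mathfrak H}$. From $DI_q(f_{n,q})=qI_{q-1}(f_{n,q})$ and $-DL^{-1}I_q(f_{n,q})=I_{q-1}(f_{n,q})$ one obtains $\langle DF_n,-DL^{-1}F_n\rangle_{\mathfrak H}=\sum_{p,q\ge d}p\,\langle I_{p-1}(f_{n,p}),I_{q-1}(f_{n,q})\rangle_{\mathfrak H}$, and the multiplication formula for multiple integrals rewrites each inner product as $\sum_{r=1}^{p\wedge q}(r-1)!\binom{p-1}{r-1}\binom{q-1}{r-1}I_{p+q-2r}(\widetilde{f_{n,p}\otimes_r f_{n,q}})$. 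The term $p=q$, $r=q$ has chaos order $0$, and the total deterministic contribution is $\sum_q q!\,\|f_{n,q}\|^{2}=\sigma_n^2$; grouping the remaining terms by chaos order, using orthogonality of Wiener chaoses, $\E[I_m(h)^2]\le m!\,\|h\|^2$, and $\|\widetilde{f_{n,p}\otimes_r f_{n,q}}\|\le\|f_{n,p}\otimes_r f_{n,q}\|$, one arrives at an estimate of the form
\[
\Var\!\bigl(\langle DF_n,-DL^{-1}F_n\rangle_{\mathfrak H}\bigr)\ \le\ C\Biggl(\,\sum_{p,q\ge d}\ \sum_{r}\ \kappa_{p,q,r}\,\bigl\|f_{n,p}\otimes_r f_{n,q}\bigr\|\Biggr)^{2},
\]
where the inner sum runs over $1\le r\le p\wedge q$ with $p+q-2r\ge1$ and $\kappa_{p,q,r}$ collects the combinatorial weights, including the factors $\sqrt{(p+q-2r)!}$ coming from the cross contributions inside each chaos. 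Applying the inequality $\|f_{n,p}\otimes_r f_{n,q}\|\le\|f_{n,p}\otimes_{p-r}f_{n,p}\|^{1/2}\|f_{n,q}\otimes_{q-r}f_{n,q}\|^{1/2}$ (and treating the full contractions $r=p\wedge q$ directly) reduces everything to the diagonal contractions $\|f_{n,q}\otimes_s f_{n,q}\|$.

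The heart of the proof is the estimate of
\[
\bigl\|f_{n,q}\otimes_s f_{n,q}\bigr\|^{2}=\frac{c_q^{4}}{n^{2}}\sum_{i,j,i',j'=1}^{n}\rho(i-j)^{s}\,\rho(i'-j')^{s}\,\rho(i-i')^{q-s}\,\rho(j-j')^{q-s}
\]
and of the analogous four-fold sums attached to the cross and full contractions. Passing to the differences of the indices turns such a sum into $n$ times a three-fold convolution-type sum of the $|\rho|$'s, to which I would apply Young's and H\"older's inequalities, separating exponents that are $\ge d$ --- so that $\sum_{k\in\Z}|\rho(k)|^{\text{(that exponent)}}<\infty$ by \eqref{h1} and may be absorbed into a constant --- from exponents $<d$, each of which produces a factor $\sum_{|k|\le n}|\rho(k)|^{\alpha}$. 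Inspecting which configurations of exponents extremize the bound, one finds that for $d=2$ the dominant contributions come from $\|f_{n,2}\otimes_1 f_{n,2}\|\lesssim n^{-1/2}\bigl(\sum_{|k|\le n}|\rho(k)|^{4/3}\bigr)^{3/2}$, obtained by applying Young's inequality to the four-cycle $\rho(i-j)\rho(j-j')\rho(j'-i')\rho(i'-i)$, together with full-contraction type terms bounded by $n^{-1/2}\bigl(\sum_{|k|\le n}|\rho(k)|\bigr)^{1/2}$; while for $d\ge3$ they come from $\|f_{n,d}\otimes_{d-1}f_{n,d}\|\lesssim n^{-1/2}\bigl(\sum_{|k|\le n}|\rho(k)|^{d-1}\bigr)\bigl(\sum_{|k|\le n}|\rho(k)|^{2}\bigr)^{1/2}$ together with (cross) terms bounded by $n^{-1/2}\bigl(\sum_{|k|\le n}|\rho(k)|^{2}\bigr)^{1/2}\bigl(\sum_{|k|\le n}|\rho(k)|\bigr)^{1/2}$; all remaining configurations are dominated by these.

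Finally I would substitute these bounds into the estimate for the variance and sum over $p,q,r$. Since each $\|f_{n,p}\otimes_r f_{n,q}\|$ carries a factor $|c_p||c_q|$ and the $\rho$-dependent part of the estimate is uniform in $p,q,r$, the remaining numerical series are combinations of sums of the type $\bigl(\sum_q q|c_q|\cdot(\text{polynomial in }q)\bigr)^{2}$; their convergence, including the control of the combinatorial factors $\kappa_{p,q,r}$ against the decay of the $c_q$, is precisely what the hypothesis $A(g)\in\DD^{1,4}(\RR,\gamma)$ guarantees, because $\E[A(g)(Z)^{4}]<\infty$ and $\E[(A(g))'(Z)^{4}]<\infty$, with $(A(g))'=\sum_{q\ge d}q|c_q|H_{q-1}$, control exactly the four-fold products of Hermite coefficients that occur. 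The absolute values in $A(g)$ are forced because the contraction bounds are obtained after replacing $\rho$ and the $c_q$ by $|\rho|$ and $|c_q|$. Assembling the pieces yields \eqref{ecu1} and \eqref{ecu2}. The main obstacle is the third step: producing the contraction estimates with the exponent configurations that give exactly the two displayed terms --- in particular checking that the configurations with all exponents $<d$ do not yield worse bounds --- and doing so with constants that remain summable against the Wiener chaos expansion.
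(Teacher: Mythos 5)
Your overall strategy (Malliavin--Stein bound plus Wiener chaos expansion) matches the paper's, and your starting point is in fact identical: the vector $-DL^{-1}F_n$ you use coincides with the paper's $u_n=\frac{1}{\sqrt n}\sum_j g_1(X_j)e_j$, so both arguments reduce to bounding $\Var(\langle DF_n,u_n\rangle_{\HH})$. The divergence --- and the gap --- lies in how this variance is controlled. You propose the classical route: expand $\langle DF_n,-DL^{-1}F_n\rangle_{\HH}$ by the multiplication formula, bound the variance by $\bigl(\sum_{p,q,r}\kappa_{p,q,r}\|f_{n,p}\otimes_r f_{n,q}\|\bigr)^2$, decouple the cross contractions by Cauchy--Schwarz, and then assert that the resulting numerical series over $(p,q,r)$ converges because $A(g)\in\DD^{1,4}(\R,\gamma)$. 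That last step is unjustified, and it is precisely where this route breaks down under so weak a hypothesis. The weights $\kappa_{p,q,r}$ are not ``polynomial in $q$'': they contain $\sqrt{(p+q-2r)!}$ (coming from $\E(I_m(h)^2)\le m!\,\|h\|^2$ and the triangle inequality across chaoses) together with $(r-1)!\binom{p-1}{r-1}\binom{q-1}{r-1}$, i.e.\ factorial-type growth. The hypothesis $A(g)\in\DD^{1,4}$ controls one specific fourth-order object, namely $\sum_{q_1,\dots,q_4}q_1q_3\prod_i|c_{q_i}|\,\E\bigl(\prod_i H_{q_i-1}(Z)\bigr)$, in which the matching coefficients intertwine all four indices; once you separate $p$ from $q$ via $\|f_{n,p}\otimes_r f_{n,q}\|\le\|f_{n,p}\otimes_{p-r}f_{n,p}\|^{1/2}\|f_{n,q}\otimes_{q-r}f_{n,q}\|^{1/2}$ and the $\sqrt{m!}$ factors, the identification with this fourth moment is lost and there is no reason the decoupled series converges.

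The paper avoids this entirely: it computes $\Var(\Phi_{n,N})$ exactly via the moment formula for products of four multiple integrals (Lemma \ref{lem1}), obtaining a sum of $K_{q,\beta}|c_{q_1}\cdots c_{q_4}|\prod_{j<k}|\rho(i_j-i_k)|^{\beta_{jk}}$ over $\beta\in\mathcal{E}_q$; it then factors out $\sup_\beta A_{n,\beta}$ (handled by the graph-theoretic analysis of the exponents and the convolution inequalities (\ref{equ6a})--(\ref{equ6c}), which is essentially your worst-configuration analysis) and recognizes the remaining combinatorial series as $\Var\bigl(A(g')(X_1)A(g_1)(X_1)\bigr)$, finite by H\"older because $A(g'),A(g_1)\in L^4(\R,\gamma)$. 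Even this identification needs Proposition \ref{prop1} (convergence in $L^2$ of powers of truncated expansions for functions with nonnegative Hermite coefficients) to pass to the limit in the truncation level $N$; your proposal has no analogue of that step. To complete your argument you would either have to retain the exact fourth-moment structure as the paper does, or impose far stronger decay on the coefficients $c_q$ than $A(g)\in\DD^{1,4}$ provides.
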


The proof of these results is based on Proposition \ref{prop2}, that requires the estimation of $\Var (\langle DF_n, u_n \rangle_{\HH})$, where $u_n$ is such that $F_n=\delta(u_n)$. Here $D$ and $\delta$ are the derivative and divergence operators associated with the Malliavin calculus for the Gaussian sequence $X$. Following the ideas developed in \cite{NN} and \cite{NZ}, we construct the sequence $u_n$ using the  operator $T_1(g)$ that  shifts in one unit the  Hermite expansion of $g$.  A basic ingredient of the proof is  an explicit computation of the variance $\Var (\langle DF_n, u_n \rangle_{\HH})$, using Wiener chaos expansions. For this we need  a result on the convergence in $L^2$ of    powers of truncated  Wiener chaos  expansions  established in Proposition \ref{prop1}, which has its own interest. 
A sufficient condition for a function $g$ to satisfy  $A(g)\in \DD^{k,M}(\RR,\gamma)$ for any integer $k \ge 0$, $M\geq 3$ is given in  Lemma \ref{lem3.3}.

Let us compare Theorem  \ref{thm1} with the existing results in the literature. For $d=2$, the estimate  (\ref{ecu1}) coincides with 
the estimate obtained in \cite{NZ} (see Theorem 4.3 (iii)), assuming  $g\in \DD^{4,4} ( \R,\gamma)$. This is the best estimate that one can obtain using Proposition \ref{prop2} (it coincides with the bound for $g(x)= x^2-1$). In \cite{NZ} this estimate is obtained applying Poincar\'e inequality to estimate the variance plus twice  the integration-by-parts formula and for this reason one requires the function $g$ to be four times differentiable. Here, we only need one derivative, but for the function $A(g)$.
In a recent note (see \cite{NPY}), the authors have obtained the weaker  bound 
\begin{equation} \label{e1}
     d_{\rm TV}(Y_n, Z)  \leq    C n^{-\frac{1}{2}} \left(\sum_{|k| \leq n} |\rho(k)|\right)^{\frac 32 } 
     \end{equation}
     assuming only  $g\in \DD^{1,4} ( \R,\gamma)$ and applying  Gebelein's inequality, instead of Poincar\'e's inequality, to estimate the variance of   $\langle DF_n, u_n \rangle_{\HH}$.
    Notice that the bound (\ref{e1}) holds, for example, for the function $g(x)= |x| - \E(|Z|)$, which belongs to  $ \DD^{1,4}(\R, \gamma)$.
        
         In the case $d\ge 3$,  the estimate  (\ref{ecu2}) coincides with 
the estimate obtained in \cite[Theorem 4.5]{NZ},   assuming  $g\in \DD^{3d-2,4} ( \R,\gamma)$, and applying the integration-by-parts argument several times.  Again our estimate requires only one derivative (for $A(g)$) instead of $3d-2$ derivatives. Also, computing the third and fourth cumulants in the case $g=H_d$, leads to the optimal bound (see  \cite{bbnp})
    \[
     d_{\rm TV} (Y_n ,Z) \\
		   \le  \frac Cn \left( \sum_{|k| \leq n} |\rho(k)|^{d-1} \right)^2 \sum_{|k| \leq n} |\rho(k)|^2
		   +  \frac{C}{\sqrt{n}} \left( \sum_{|k| \leq n} |\rho(k)|^{\frac {3d} 4} \right)^2 \mathbf{1}_{\{d \,\, {\rm even}\}}.
\]

The second part of the paper is devoted to showing two improvements of the above bound for $d=2$.
First we establish the following upper bound for the Wasserstein distance, using a new estimate (see Proposition \ref{prop3}) and the representation of $F_n$ as an iterated divergence $F_n = \delta^2(v_n)$. 
  \begin{theorem} \label{thm2}
Assume that $g\in L^2(\RR,\gamma)$ has Hermite rank $d= 2$ and  satisfies $A(g)\in \DD^{2,6}(\RR,\gamma)$. Suppose that   (\ref{h1}) holds true and let $Y_n$ be the random variable defined in (\ref{yn}).   Then we have the following estimate
\begin{equation}  \label{equa1}
				   d_{\rm W}(Y_n , Z)  \leq  C n^{-\frac{1}{2}} \left(\sum_{|k| \leq n} |\rho(k)|\right)^{\frac{1}{2}}
				   + C n^{-\frac{1}{2}} \left(\sum_{|k| \leq n} |\rho(k)| ^{\frac 32}\right)^2.
  \end{equation}
  \end{theorem}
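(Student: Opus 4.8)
The proof combines two ingredients: the representation of $F_n$ as a double Skorokhod integral, and the abstract bound of Proposition \ref{prop3}. Let $T_1$ be the operator that shifts a Hermite expansion down by one unit, $T_1\big(\sum_{q\ge d}c_qH_q\big)=\sum_{q\ge d}c_qH_{q-1}$, and set $T_1^2=T_1\circ T_1$; since $d=2$ the function $T_1^2(g)=\sum_{q\ge 2}c_qH_{q-2}$ still belongs to $L^2(\RR,\gamma)$. Writing $\varepsilon_i\in\HH$ for the representer of $X_i$ (so that $X_i=W(\varepsilon_i)$ and $\|\varepsilon_i\|_\HH=1$), the computation that yields $F_n=\delta(u_n)$ with $u_n=n^{-1/2}\sum_{i=1}^nT_1(g)(X_i)\varepsilon_i$ iterates to give
\[
F_n=\delta^2(v_n),\qquad v_n=\frac1{\sqrt n}\sum_{i=1}^nT_1^2(g)(X_i)\,\varepsilon_i\otimes\varepsilon_i ,
\]
with $\delta(v_n)=u_n$. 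The hypothesis $A(g)\in\DD^{2,6}(\RR,\gamma)$ guarantees $v_n\in\dom\delta^2$ together with finiteness of the $L^p$-norms, $p\le 6$, of $g'(X_j)$, $g''(X_j)$, $T_1^2(g)(X_i)$ and of the products of these appearing below; an explicit sufficient condition on $g$ is given by Lemma \ref{lem3.3}. I would begin by recording this representation and the a priori bound $\EE\|v_n\|_{\HH^{\otimes2}}^2\le C$, whose proof reduces to $\tfrac1n\sum_{i,j=1}^n|\rho(i-j)|^2\le\sum_{j\in\Z}|\rho(j)|^2<\infty$ via Hermite orthogonality.

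Applying Proposition \ref{prop3} to $Y_n=\delta^2(v_n)/\sqrt{\Var(F_n)}$, together with the elementary identities $D[g(X_j)]=g'(X_j)\varepsilon_j$ and $D^2[g(X_j)]=g''(X_j)\,\varepsilon_j\otimes\varepsilon_j$, bounds $d_{\rm W}(Y_n,Z)$ by a constant times $Q_1+Q_2$, where
\[
Q_1:=\frac{1}{\Var(F_n)}\,\EE\Big[\Big|\Var(F_n)-\frac1n\sum_{i,j=1}^n\rho(i-j)^2\,T_1^2(g)(X_i)\,g''(X_j)\Big|\Big]
=\frac{\EE\big[\big|\Var(F_n)-\langle v_n,D^2F_n\rangle_{\HH^{\otimes2}}\big|\big]}{\Var(F_n)}
\]
and
\[
Q_2:=\frac1{\Var(F_n)^{3/2}}\,\EE\Big[\Big|\frac1{n^{3/2}}\sum_{i,j,l=1}^n\rho(i-j)\rho(i-l)\,T_1^2(g)(X_i)\,g'(X_j)g'(X_l)\Big|\Big]
=\frac{\EE\big[\big|\langle v_n,DF_n\otimes DF_n\rangle_{\HH^{\otimes2}}\big|\big]}{\Var(F_n)^{3/2}}.
\]
For $Q_1$ I would use $\EE[\langle v_n,D^2F_n\rangle]=\EE[F_n^2]=\Var(F_n)$, so that $Q_1\le\Var(F_n)^{-1}\sqrt{\Var(\langle v_n,D^2F_n\rangle)}$. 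For $Q_2$, since $g'$ has Hermite rank $1$ one cannot bound the inner expectation by a constant; instead I would write $\EE[|W_n|]\le|\EE[W_n]|+\sqrt{\Var(W_n)}$, with $W_n$ the triple sum above.

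It remains to compute, via the Wiener chaos expansions of $T_1^2(g)(X_i)$, $g'(X_j)$, $g''(X_j)$ — here Proposition \ref{prop1} is invoked to justify truncating these expansions, multiplying the truncations, and passing to the $L^2$ limit — the three quantities $\Var(\langle v_n,D^2F_n\rangle)$, $\EE[W_n]$, $\Var(W_n)$, and to bound the resulting finite sums of products of values of $\rho$. The expected outcome is that $\Var(\langle v_n,D^2F_n\rangle)$ splits into a ``tree-like'' part $\le Cn^{-1}\sum_{|k|\le n}|\rho(k)|$ (two of the four indices entering only through a summable factor $\rho(\cdot)^2$, the remaining two joined by a single covariance) plus four-index ``cycle'' parts $\le Cn^{-1}\big(\sum_{|k|\le n}|\rho(k)|^{3/2}\big)^{4}$; that $|\EE[W_n]|$ is dominated by the ``triangle'' sum $Cn^{-3/2}\sum_{i,j,l}|\rho(i-j)\rho(i-l)\rho(j-l)|\le Cn^{-1/2}\big(\sum_{|k|\le n}|\rho(k)|^{3/2}\big)^{2}$ (by the multilinear Young inequality with all exponents $\tfrac32$); and that $\Var(W_n)$ is of no larger order. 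Taking square roots, collecting, and using that $\Var(F_n)\to\sigma^2>0$ with $|\Var(F_n)-\sigma^2|=O\big(n^{-1}\sum_{|k|\le n}|\rho(k)|\cdot\sum_{|k|\le n}|\rho(k)|^2\big)$ being harmless, one arrives at \eqref{equa1}. The principal obstacle is exactly this last step: one must enumerate every $\rho$-exponent pattern produced by the chaos expansions and check that each is absorbed into $n^{-1/2}\big(\sum_{|k|\le n}|\rho(k)|\big)^{1/2}$ or $n^{-1/2}\big(\sum_{|k|\le n}|\rho(k)|^{3/2}\big)^{2}$ — it is here that the double (rather than single) divergence, and the integrability order $6$ rather than $4$ in $A(g)\in\DD^{2,6}$, are needed.
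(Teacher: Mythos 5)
Your proposal follows essentially the same route as the paper: the representation $F_n=\delta^2(v_n)$ with $v_n=n^{-1/2}\sum_j g_2(X_j)\,e_j\otimes e_j$, Proposition \ref{prop3}, and then chaos-expansion computations of the four- and six-index sums controlled by the Brascamp--Lieb type inequalities, with Proposition \ref{prop1} justifying the truncation limits and the $L^6$ integrability of $A(g)$ entering exactly where you say it does. The only cosmetic differences are that the paper bounds $\EE|W_n|$ by $\sqrt{\EE(W_n^2)}$ rather than by $|\EE(W_n)|+\sqrt{\Var(W_n)}$, and that it shows the whole of $\Var(\langle D^2F_n,v_n\rangle_{\HH^{\otimes 2}})$ is $O\bigl(n^{-1}\sum_{|k|\le n}|\rho(k)|\bigr)$ (the constraints $\beta_{12},\beta_{34}\ge 2$ always leave two square-summable edges), so no separate cycle contribution is needed there; neither difference affects the conclusion.
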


Going back to the total variation distance, we recall  first that the  optimal bound for $d=2$ is  
    \begin{equation} \label{optimal}
     d_{\rm TV}(Y_n, Z)  \leq    C n^{-\frac{1}{2}} \left(\sum_{|k| \leq n} |\rho(k)|  ^{\frac 32}\right)^{2 }.
     \end{equation}
     This estimate  was obtained for $g=H_2$ in \cite{np-15}, with a matching lower bound, and it was extended to  $g\in \DD^{6,8}(\R,\gamma)$ in \cite{NZ}. This upper bound, however, cannot be obtained as a consequence of Proposition \ref{prop2} and requires a more intensive application of  Stein's method (see  \cite{np-15,NZ}). Using Proposition \ref{prop2a}, we have obtained the following result.
     \begin{theorem} \label{thm1a}
Assume that $g\in L^2(\RR,\gamma)$ has Hermite rank $d= 2$ and  satisfies $A(g)\in \DD^{3,8}(\RR,\gamma)$. Suppose that   (\ref{h1}) holds true and
let $Y_n$ be the random variable defined in (\ref{yn}).  Then  the  estimate (\ref{optimal}) holds true.
\end{theorem}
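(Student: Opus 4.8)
\emph{Plan of proof of Theorem \ref{thm1a}.} The plan is to apply Proposition \ref{prop2a} to $Y_n$ and then to check that every term on the right-hand side of the resulting estimate is bounded by a constant multiple of $n^{-1/2}\big(\sum_{|k|\le n}|\rho(k)|^{3/2}\big)^{2}$. First I would take from the proofs of Theorems \ref{thm1} and \ref{thm2} the representation of $F_n$ as an (iterated) divergence built from the shift operator $T_1(g)$, and observe that $A(g)\in\DD^{3,8}(\R,\gamma)$, combined with Proposition \ref{prop1}, guarantees that all the Malliavin derivatives, divergences and inner products of the form $\langle DF_n,u_n\rangle_{\HH}$ and their iterates that occur in Proposition \ref{prop2a} are well defined, lie in the required $L^{p}$-spaces uniformly in $n$, and that truncating the Hermite expansion (\ref{hexp}) of $g$ costs only a negligible error; hence one may reduce to the case where $g$ is a polynomial of Hermite rank $d=2$, the general case following by approximation. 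Proposition \ref{prop2a} then yields an estimate whose dominant contributions are a third-cumulant term $\kappa_{3,n}:=\E[F_n^{3}]/\Var(F_n)^{3/2}$ and a fourth-order contraction term $\kappa_{4,n}$ appearing \emph{without} a square root --- this last feature, imported from the sharper Stein analysis of \cite{np-15,NZ}, being exactly the gain over the weaker estimate (\ref{ecu1}) --- together with lower-order remainders $R_n$.

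For $\kappa_{3,n}$ one expands $\E[F_n^{3}]=n^{-3/2}\sum_{i,j,k=1}^{n}\E\big[g(X_i)g(X_j)g(X_k)\big]$ by the chaos decomposition of $F_n$ --- only chaoses of order $\ge2$ enter, since $d=2$ --- and by the Wick/diagram formula; each surviving diagram is, up to a constant, a sum over $1\le i,j,k\le n$ of a product of covariances whose arguments sum to zero, and after summing over one free index this is at most $n$ times $\sum_{a,b}|\rho(a)|\,|\rho(b)|\,|\rho(a+b)|$, the ranges being controlled by $1\le i,j,k\le n$. Reading this last sum as $\langle|\rho|,\,|\rho|*|\rho|\rangle_{\ell^{2}(\Z)}$, Young's convolution inequality gives $\| |\rho|*|\rho| \|_{\ell^{3}}\le\|\rho\|_{\ell^{3/2}}^{2}$, and H\"older's inequality with conjugate exponents $3$ and $3/2$ then gives $\sum_{a,b}|\rho(a)||\rho(b)||\rho(a+b)|\le\|\rho\|_{\ell^{3/2}}^{3}=\big(\sum_{|k|\le n}|\rho(k)|^{3/2}\big)^{2}$; hence $|\kappa_{3,n}|\le C\,n^{-1/2}\big(\sum_{|k|\le n}|\rho(k)|^{3/2}\big)^{2}$. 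For $\kappa_{4,n}$ the same accounting produces products of four covariances along a $4$-cycle, contributing, after summing over one free index and including the factor $n^{-2}$ from the normalization, at most $C\,n^{-1}\| |\rho|*|\rho| \|_{\ell^{2}(\Z)}^{2}$; Young's inequality again gives $\| |\rho|*|\rho| \|_{\ell^{2}}\le\|\rho\|_{\ell^{4/3}}^{2}$, so $\kappa_{4,n}\le C\,n^{-1}\big(\sum_{|k|\le n}|\rho(k)|^{4/3}\big)^{3}$. Interpolating $\ell^{4/3}$ between $\ell^{3/2}$ and $\ell^{1}$ gives $\big(\sum_{|k|\le n}|\rho(k)|^{4/3}\big)^{3}\le\big(\sum_{|k|\le n}|\rho(k)|^{3/2}\big)^{2}\sum_{|k|\le n}|\rho(k)|$, and Cauchy--Schwarz with (\ref{h1}) for $d=2$ gives $\sum_{|k|\le n}|\rho(k)|\le(2n+1)^{1/2}\big(\sum_{k\in\Z}\rho(k)^{2}\big)^{1/2}\le C\sqrt{n}$; hence $\kappa_{4,n}\le C\,n^{-1/2}\big(\sum_{|k|\le n}|\rho(k)|^{3/2}\big)^{2}$ as well.

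It remains to bound $R_n$. Since $g$ has Hermite rank $d=2$, every contribution to $R_n$ that involves a Hermite coefficient $c_q$ with $q\ge3$ carries strictly more factors of $\rho$ than the leading ($q=2$) contributions treated above, and, using Proposition \ref{prop1} to pass to the limit in the relevant powers of truncated chaos expansions, the same Young/H\"older/interpolation estimates bound each of these by $C\,n^{-1/2}\big(\sum_{|k|\le n}|\rho(k)|^{3/2}\big)^{2}$; combining the three bounds gives (\ref{optimal}). I expect the hard part to be the algebra inside the application of Proposition \ref{prop2a} for a general $g$: one must isolate the correct ``reduced'' fourth-order remainder --- what is left of $\Var(\langle DF_n,u_n\rangle_{\HH})$ once the part explained by the third cumulant has been subtracted --- show that it is genuinely of contraction type, so that it carries the prefactor $n^{-1}$ rather than $n^{-1/2}$ (this is what replaces the crude term $n^{-1/2}\big(\sum_{|k|\le n}|\rho(k)|\big)^{1/2}$ of (\ref{ecu1}) by something subsumed in (\ref{optimal})), and verify that $\DD^{3,8}$-regularity of $A(g)$ --- rather than of $g$ itself --- is enough for every integration by parts and every $L^{2}$-limit of truncated chaos expansions that appears; Proposition \ref{prop1} is precisely the tool supplying that last point.
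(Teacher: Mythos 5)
Your overall skeleton is the paper's: apply Proposition \ref{prop2a} to $Y_n=\delta(u_n/\sigma_n)$, bound $|\E(F_n^3)|$ by $Cn^{-1/2}\bigl(\sum_{|k|\le n}|\rho(k)|^{3/2}\bigr)^2$ via the triangle diagram and the convolution inequality (\ref{equ6a}), and exploit the fact that $\Var(\langle DF_n,u_n\rangle_{\HH})$ now enters \emph{without} a square root, so that the two bounds already obtained in Theorem \ref{thm1}, namely $n^{-1}\sum_{|k|\le n}|\rho(k)|$ and $n^{-1}\bigl(\sum_{|k|\le n}|\rho(k)|^{4/3}\bigr)^3$, are absorbed into (\ref{optimal}) by exactly the H\"older interpolation of $\ell^{4/3}$ between $\ell^1$ and $\ell^{3/2}$ plus $\sum_{|k|\le n}|\rho(k)|\le C\sqrt n$ that you write down. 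Those two steps match Steps 1 and 2 of the paper's proof. Note, however, that no subtraction of ``the part of the variance explained by the third cumulant'' is needed or performed anywhere; the entire gain over (\ref{ecu1}) is the absence of the square root, so the speculation in your last paragraph is a red herring.

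The genuine gap is your treatment of what you call $R_n$. The remaining terms in Proposition \ref{prop2a} are $\E(|D^2_{u_n}Y_n|^2)$ and $\E(|D^3_{u_n}Y_n|)$, and these are \emph{not} remainders generated by Hermite coefficients $c_q$ with $q\ge3$: they have nonvanishing contributions already at $q=2$ (for $g=c_2H_2$ one still has $D^2_{u_n}F_n=\frac{1}{n^{3/2}}\sum_{i,j,k}\bigl(g''(X_i)g_1(X_j)g_1(X_k)\rho(i-k)+g'(X_i)g_1'(X_j)g_1(X_k)\rho(j-k)\bigr)\rho(i-j)$, a genuine nonconstant random variable), and they are not covered by your $\kappa_{3,n}$ or $\kappa_{4,n}$ analysis, so the claim that ``the same Young/H\"older/interpolation estimates'' dispose of them does not follow from anything you have set up. In the paper these two terms are the bulk of the proof: one expands $D^2_{u_n}F_n$ and $D^3_{u_n}F_n$ explicitly as triple and quadruple sums with prescribed covariance patterns, computes their second moments via Lemma \ref{lem1} (sums over $6$ and $8$ indices), and runs a graph-theoretic case analysis (connected versus disconnected diagram, using (\ref{equ6a})--(\ref{equ6c})) to get $\E(|D^2_{u_n}F_n|^2)+\E(|D^3_{u_n}F_n|^2)\le Cn^{-1}\bigl(\sum_{|k|\le n}|\rho(k)|^{3/2}\bigr)^4$, taking a square root only of the last term; the finiteness of the accompanying coefficient sums is where Proposition \ref{prop1} and the hypothesis $A(g)\in\DD^{3,8}(\R,\gamma)$ (control of $A(g''')$, $A(g'')$, $A(g_1)$ in $L^8(\R,\gamma)$) are actually used. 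As written, your argument gives no bound for these two terms, so the proof is incomplete at precisely the point where most of the work lies.
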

     
   Notice that the first term in (\ref{equa1}) coincides with the first term in  (\ref{ecu1}), while the second term is precisely the optimal rate for the total variation distance (\ref{optimal}).
  
  The paper is organized as follows. Section 2 reviews some preliminaries on the Malliavin calculus for an isonormal Gaussian process and  Stein's method. Section 3 presents a new result on the convergence in $L^2(\Omega)$ of  powers of Wiener chaos expansions, which has its own interest. Finally, Sections 4,  5 and 6 contain the proofs of Theorems \ref{thm1}, \ref{thm2} and \ref{thm1a}, respectively.
  
     Along the paper we will  denote by $C$ a generic constant that may vary from line to line.

   \section{Preliminaries}
  
   In this section, we briefly recall some  elements of  the Malliavin calculus associated with a Gaussian family of random variables. 
   We refer the reader to \cite{np-book,nualartbook,CBMS} for a detailed account on this topic.  We will also recall two basic inequalities for the total variation distance proved using  Stein's method and we present a new inequality for the Wasserstein distance.
   
\subsection{Malliavin calculus}

 Let $\mathfrak{H}$ be a real separable Hilbert space. For any integer $m \geq 1$, we use $\mathfrak{H}^{\otimes m}$ and $\mathfrak{H}^{\odot m}$ to denote the $m$-th tensor product and the $m$-th symmetric tensor product of $\mathfrak{H}$, respectively. Let $W = \{W(\phi), \phi \in \mathfrak{H}\}$  denote an isonormal Gaussian process over the Hilbert space $\mathfrak{H}$. That means, $W$ is a centered Gaussian family of random variables, defined on some probability space $(\Omega, \mathcal{F}, P)$, with covariance $$\EE\left(W(\phi)W(\psi)\right) = \langle \phi, \psi \rangle_{\mathfrak{H}}, \qquad \phi, \psi \in \mathfrak{H}.$$
We assume that $\mathcal{F}$ is generated by $W$.

We denote by $\mathcal{H}_m$ the closed linear subspace of $L^2(\Omega)$ generated by the random variables $\{H_m(W(\varphi)): \varphi \in \mathfrak{H}, \|\varphi\|_{\mathfrak{H}}=1\}$, where $H_m$ is the $m$-th Hermite polynomial defined by 
\[
H_m(x)=(-1)^me^{\frac{x^2}{2}}\frac{d^m}{dx^m}e^{-\frac{x^2}{2}},\quad m \geq 1,
\]
and $H_0(x)=1$. The space $\mathcal{H}_m$ is called the Wiener chaos of order $m$. The $m$-th multiple integral of $\phi^{\otimes m} \in \mathfrak{H}^{\odot m}$ is defined by the identity 
$ I_m(\phi^{\otimes m}) = H_m(W(\phi))$ for any $\phi\in \mathfrak{H}$ with $\| \phi\|_{\HH}=1$. The map $I_m$ provides a linear isometry between $\mathfrak{H}^{\odot m}$ (equipped with the norm $\sqrt{m!}\|\cdot\|_{\mathfrak{H}^{\otimes m}}$) and $\mathcal{H}_m$ (equipped with $L^2(\Omega)$ norm). By convention, $\mathcal{H}_0 = \mathbb{R}$ and $I_0(x)=x$.

The space $L^2(\Omega)$ can be decomposed into the infinite orthogonal sum of the spaces $\mathcal{H}_m$. Namely, for any square integrable random variable $F \in L^2(\Omega)$, we have the following expansion,
\begin{equation} \label{chaos}
  F = \sum_{m=0}^{\infty} I_m (f_m) ,
\end{equation}
where $f_0 = \mathbb{E}(F)$, and $f_m \in \mathfrak{H}^{\odot m}$ are uniquely determined by $F$. This is known as the Wiener chaos expansion.  

  For a smooth and cylindrical random variable $F= f(W(\varphi_1), \dots , W(\varphi_n))$, with $\varphi_i \in \mathfrak{H}$ and $f \in C_b^{\infty}(\mathbb{R}^n)$ ($f$ and its partial derivatives are bounded), we define its Malliavin derivative as the $\mathfrak{H}$-valued random variable given by
\[
 DF = \sum_{i=1}^n \frac{\partial f}{\partial x_i} (W(\varphi_1), \dots, W(\varphi_n))\varphi_i\ .
\]
By iteration, we can also define the $k$-th derivative $D^k F$, which is an element in the space $L^2(\Omega; \mathfrak{H}^{\otimes k})$. For any real $p\ge 1$ and any integer $k\ge 1$, the Sobolev space $\mathbb{D}^{k,p}$ is defined as the closure of the space of smooth and cylindrical random variables with respect to the norm $\|\cdot\|_{k,p}$ defined by 
\[
 \|F\|^p_{k,p} = \mathbb{E}(|F|^p) + \sum_{i=1}^k \mathbb{E}(\|D^i F\|^p_{\mathfrak{H}^{\otimes i}}).
\]
We define the divergence operator $\delta$ as the adjoint of the derivative operator $D$. Namely, an element $u \in L^2(\Omega; \mathfrak{H})$ belongs to the domain of $\delta$, denoted by $\dom\, \delta$, if there is a constant $c_u > 0$ depending on $u$ and satisfying 
\[
|\mathbb{E} (\langle DF, u \rangle_{\mathfrak{H}})| \leq c_u \|F\|_{L^2(\Omega)}
\] for any $F \in \mathbb{D}^{1,2}$.  If $u \in \dom \,\delta$, the random variable $\delta(u)$ is defined by the duality relationship 
\begin{equation} \label{dua}
\mathbb{E}(F\delta(u)) = \mathbb{E} (\langle DF, u \rangle_{\mathfrak{H}}) \, ,
\end{equation}
which is valid for all $F \in \mathbb{D}^{1,2}$.  
In a similar way, for each integer $k\ge 2$, we define the iterated divergence operator $\delta^k$ through the duality relationship 
\begin{equation} \label{dua2}
\mathbb{E}(F\delta^k(u)) = \mathbb{E}  \left(\langle D^kF, u \rangle_{\mathfrak{H}^{\otimes k}} \right),
\end{equation}
valid for any $F \in \mathbb{D}^{k,2}$, where $u\in  {\rm Dom}\, \delta^k \subset L^2(\Omega; \mathfrak{H}^{\otimes k})$.

Let $\gamma$  be the standard Gaussian measure on $\R$.  The Hermite polynomials $\{H_m(x), m\ge 0\}$  form a complete orthonormal system in $L^2(\R,\gamma)$ and  any function $g\in L^2(\R,\gamma)$ admits an  orthogonal expansion  of the form (\ref{hexp}). 
 If $g$ has Hermite rank $d$, for any integer $1\le k\le d$, we define the operator $T_k$ by
\begin{equation} \label{t1a}
T_k(g)(x) = \sum_{m=d}^\infty c_m H_{m-k}(x) \,.
\end{equation}
To simplify the notation we will write $T_k(g) =g_k$.

Suppose that $F$ is a random variable in the first Wiener chaos of $W$ of the form $F= I_1(\varphi)$, where $\varphi \in \HH$ has norm one.  Then  $g_k(F)$ has the representation
   \begin{equation}\label{g.intrep}
   	 g(F) = \delta^k( g_k(F) \varphi^{\otimes k}) \,.
   \end{equation}
   Moreover, if  $g(F) \in \DD^{j,p}$ for some $j\ge 0$ and $p>1$, then $g_k(F) \in \DD^{j+k, p} $. We refer to \cite{NZ} for the proof of these results. 
   
   Consider $\mathfrak{H}=\R$,  the probability space $(\Omega,  \mathcal{F}, P)= (\R, \mathcal{B}(\R), \gamma)$  and the isonormal Gaussian process $W(h)=h$.   For any $k\ge0$ and $p\ge 1$, denote by $\mathbb{D}^{k,p}(\R,\gamma)$ the  corresponding Sobolev spaces of functions.    Notice that if $F=I_1(\varphi)$ is an element in the first Wiener chaos  with $\|\varphi\|_{\HH} =1$, then $g\in \mathbb{D}^{k,p}(\R,\gamma)$ if and only if $g(F)\in \mathbb{D}^{k,p}$.

\subsection{Stein's method}
We refer to  \cite{ChenGoldShao} for a complete presentation of this topic.
Let  $h: \mathbb{R} \to \mathbb{R}$ be a Borel function such that $h \in L^1(\R, \gamma)$. The ordinary differential equation
      \begin{equation} \label{stein}
      f'(x) - xf(x) = h(x) - \mathbb{E}(h(Z))
      \end{equation}
is called the Stein's equation associated with $h$. The function 
\[
f_h(x):= e^{x^2/2}\int_{-\infty}^x (h(y) - \mathbb{E}(h(Z)))e^{-y^2/2} dy
\]
 is the unique solution to the Stein's equation satisfying $\lim_{|x| \to \infty} e^{-x^2/2} f_h(x) = 0$. Moreover, if $h$ is bounded by $1$, $f_h$ satisfies  $\|f_h \|_\infty \leq \sqrt{\pi /2}$ and 
    $   \|f_h' \|_\infty \leq 2$. 
    On the other hand,   if $h\in  {\rm Lip}(1)$ ($h$ is Lipschitz with a Lipschitz constant bounded by $1$), then 
    $f_h$ is continuously differentiable,  $   \|f_h' \|_\infty  \le \sqrt{2/\pi}$ and (see \cite[Lemma 3]{stein}) $ \|f_h''\| _\infty\le 2$.
We refer  to  \cite{np-book} and the references therein for a complete proof of these results. 

We recall that the total variation distance between the laws of two random variables $F,G$ is defined by
\[
d_{\rm TV}(F,G) = \sup_{B \in \mathcal{B}(\mathbb{R})}|P(F \in B) - P(G \in B)| \,,
\]
 where the supremum runs over all Borel sets $B \subset \mathbb{R}$.  Substituting $x$ by $F$  in  Stein's equation (\ref{stein}) and
 using the   estimate for  $ \|f_h' \|_\infty$    lead  to the fundamental estimate
\begin{equation}  \label{equ83}
d_{\rm TV}(F,Z) \le  \sup_{f\in  \mathcal{C}^1(\R),    \| f' \|_\infty \le 2 } | \EE  (f'(F)- Ff(F)) | \,.
\end{equation}
Furthermore, the Wasserstein distance between the laws of two random variables $F,G$ is defined by
\[
d_{\rm W}(F,G) = \sup_{ f \in {\rm Lip} (1)} | \E(f(F))- \E(f((G ))| \,
\]
and  using Stein's equation leads to
\begin{equation} \label{W}
d_{\rm W}(F,G) \le \sup_{f \in  \mathcal{F}_W}  | \EE  (f'(F)- Ff(F)) | \,,
\end{equation}
where $\mathcal{F}_W$ is the set of functions  $f \in \mathcal{C}^2(\R)$ such that $ \|f_h' \|_\infty  \le \sqrt{2/\pi}$ and $  \|f_h''\| _\infty\le 2$.

In the framework of an isonormal Gaussian process $W$, we  can use Stein's equation to estimate  the  total variation distance between a random variable $F = \delta(u)$ and $Z$.  A basic result is given in the next proposition (see \cite{eulalia,np-book}), which is an easy consequence of  (\ref{equ83}) and the duality relationship (\ref{dua}).
 
\begin{proposition}\label{prop2}
	Assume that $u\in {\rm Dom} \,\delta$,  $F=\delta(u) \in \mathbb{D}^{1,2}$ and $\EE(F^2)=1$.  Then,
	\begin{eqnarray*}
	d_{\rm TV} (F,Z) 	\le   2 \sqrt{\Var( \langle DF, u \rangle_{\mathfrak{H}} )}  \,.
	\end{eqnarray*}
\end{proposition}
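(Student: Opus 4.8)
The plan is to apply the Stein bound (\ref{equ83}) and then use the integration-by-parts formula (\ref{dua}) to rewrite the quantity $\mathbb{E}(f'(F) - Ff(F))$ in a form controlled by $\Var(\langle DF, u \rangle_{\mathfrak{H}})$. So I would fix a function $f \in \mathcal{C}^1(\mathbb{R})$ with $\|f'\|_\infty \le 2$ and first note that $f(F) \in \mathbb{D}^{1,2}$ with $D(f(F)) = f'(F)\,DF$: this is the chain rule for the Malliavin derivative, valid because $F \in \mathbb{D}^{1,2}$ and $f$ is $\mathcal{C}^1$ with bounded derivative (if one prefers the version of the chain rule requiring $f$ itself bounded, one truncates $f$ and passes to the limit in $L^2(\Omega)$, which preserves the bound on $\|f'\|_\infty$).

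Next I would invoke the duality relationship (\ref{dua}) twice. Taking $f(F)$ as test random variable and recalling $F = \delta(u)$ gives
\[
\mathbb{E}(Ff(F)) = \mathbb{E}(f(F)\delta(u)) = \mathbb{E}(\langle D(f(F)), u \rangle_{\mathfrak{H}}) = \mathbb{E}(f'(F)\langle DF, u \rangle_{\mathfrak{H}}),
\]
while taking $F$ itself as test random variable (allowed since $F \in \mathbb{D}^{1,2}$) and using $\mathbb{E}(F^2)=1$ gives $\mathbb{E}(\langle DF, u \rangle_{\mathfrak{H}}) = \mathbb{E}(F\delta(u)) = 1$. Combining these,
\[
\mathbb{E}(f'(F) - Ff(F)) = \mathbb{E}\big(f'(F)\,(1 - \langle DF, u \rangle_{\mathfrak{H}})\big).
\]

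To conclude I would bound the last expectation using $\|f'\|_\infty \le 2$ together with the Cauchy--Schwarz inequality and the fact that $1 - \langle DF, u \rangle_{\mathfrak{H}}$ has mean zero:
\[
|\mathbb{E}(f'(F) - Ff(F))| \le 2\,\mathbb{E}\big|1 - \langle DF, u \rangle_{\mathfrak{H}}\big| \le 2\sqrt{\mathbb{E}\big(1 - \langle DF, u \rangle_{\mathfrak{H}}\big)^2} = 2\sqrt{\Var(\langle DF, u \rangle_{\mathfrak{H}})}.
\]
Taking the supremum over all admissible $f$ and using (\ref{equ83}) then yields the asserted bound. The computation is entirely routine; the only point requiring a word of care is the chain-rule step and the membership $f(F) \in \mathbb{D}^{1,2}$ for merely $\mathcal{C}^1$ functions with bounded derivative, which as noted is handled by a standard truncation argument. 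I do not expect any genuine obstacle — this is a short consequence of Stein's equation and the integration-by-parts formula of Malliavin calculus.
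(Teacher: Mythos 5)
Your argument is correct and is exactly the route the paper intends: Proposition \ref{prop2} is stated there as ``an easy consequence of (\ref{equ83}) and the duality relationship (\ref{dua})'', and your chain of identities — $\EE(Ff(F))=\EE(f'(F)\langle DF,u\rangle_{\HH})$, $\EE(\langle DF,u\rangle_{\HH})=\EE(F^2)=1$, then Cauchy--Schwarz — is the standard proof given in the cited references. No gaps; the remark about the chain rule for $\mathcal{C}^1$ functions with bounded derivative is the right point of care and is handled as you say.
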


An iterative  application of  the  Stein-Malliavin approach leads to the following result, which requires the random variable $F$ to be three times differentiable (see  \cite[Proposition 3.2.]{NZ}).

\begin{proposition}\label{prop2a}
Assume that $u\in {\rm Dom} \,\delta$,  $F=\delta(u) \in \mathbb{D}^{3,2}$ and $\EE(F^2)=1$.  Then,
\[
	d_{TV} (F ,Z)  \leq  (8+\sqrt{32\pi}) \Var( \langle DF, u \rangle_{\mathfrak{H}} )+\sqrt{2\pi}|\mathbb{E}(F^3)|+ \sqrt{32\pi}\mathbb{E}(|D_u F|^2)+ 4\pi \mathbb{E}(|D^3 _uF|),
\]
where we have used the notation $D_uF=\langle u, DF \rangle_{\mathfrak{H}}$ and  $ D_u^{i+1} F = \langle u, D(D_u^i F) \rangle_{\mathfrak{H}}  $ for  $i \ge 1$.
\end{proposition}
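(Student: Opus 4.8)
The plan is to iterate the Stein--Malliavin integration by parts one step beyond what is used in Proposition \ref{prop2}. First I would reduce to smooth test functions: one has $d_{\rm TV}(F,Z)=\sup\{|\E(h(F))-\E(h(Z))|\}$, the supremum running over $h\in C_c^\infty(\R)$ with $\|h\|_\infty\le 1$ (a Borel indicator is approximated by such $h$ in $L^1$ of the sum of the laws of $F$ and of $Z$), so that the solution $f=f_h$ of Stein's equation (\ref{stein}) is smooth and satisfies $\|f\|_\infty\le\sqrt{\pi/2}$ and $\|f'\|_\infty\le 2$; this legitimizes all the integrations by parts below. As in Proposition \ref{prop2}, writing $F=\delta(u)$ and combining the duality (\ref{dua}) with the chain rule $D(f(F))=f'(F)DF$ gives $\E(Ff(F))=\E(f'(F)D_uF)$, hence
\[
\E(h(F))-\E(h(Z))=\E\big(f'(F)(1-D_uF)\big).
\]

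Rather than bounding this by $2\sqrt{\Var(D_uF)}$ as in Proposition \ref{prop2}, I would expand $1-D_uF$ once more. The product rule $F\delta(u)=\delta(Fu)+\langle DF,u\rangle_\HH$ gives $F^2=\delta(Fu)+D_uF$, so $1-D_uF=(1-F^2)+\delta(Fu)$ and
\[
\E\big(f'(F)(1-D_uF)\big)=\E\big(f'(F)(1-F^2)\big)+\E\big(f'(F)\delta(Fu)\big),
\]
where the second summand equals $\E(f''(F)FD_uF)$ by (\ref{dua}), and the first, after integrating the factor $F^2$ by parts, yields the third-cumulant contribution proportional to $\E(F^3)$ together with further terms carrying $f''$. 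One then keeps iterating: in each term write $D_uF=1+(D_uF-1)$ and integrate the first summand by parts — each such step transfers one derivative from $f$ onto the Malliavin side, which is exactly why $D_u^2F$, $D_u^3F$ and the hypothesis $F\in\DD^{3,2}$ enter — retaining a remainder linear in $D_uF-1$. After finitely many steps the surviving pieces are: a main term of the shape $\E(\Phi(F)(D_uF-1)\Psi)$ with $\Phi,\Psi$ bounded, which Cauchy--Schwarz bounds by a multiple of $\Var(D_uF)$ (the variance now appearing \emph{linearly}, which is the whole gain); the cumulant term $|\E(F^3)|$; and two error terms $\E(|D_u^2F|^2)$ and $\E(|D_u^3F|)$. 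Crucially, every $f''$ and $f'''$ generated along the way is re-integrated by parts down to $f'$ or $f$, so that only $\|f\|_\infty\le\sqrt{\pi/2}$ and $\|f'\|_\infty\le 2$ survive in the final constants — consistently with $\sqrt{32\pi}=8\sqrt{\pi/2}$ and $\sqrt{2\pi}=2\sqrt{\pi/2}$. A preliminary routine check ensures $u\in\dom\,\delta$, $Fu\in\dom\,\delta$ and that each inner product occurring is well defined when $F\in\DD^{3,2}$.

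The hard part will be precisely this bookkeeping, which is forced by the weak control available on the test function: for the total variation distance one controls only $\|f\|_\infty$ and $\|f'\|_\infty$, so the $f''$- and $f'''$-terms produced by the iteration cannot be estimated by their sup-norms and must be systematically integrated away. Organizing the iteration so that (a) these higher derivatives of $f$ always cancel or get re-integrated, (b) the Malliavin side never leaves $\DD^{3,2}$, and (c) the leftover main term is arranged so that a single application of Cauchy--Schwarz yields $\Var(D_uF)$ and not its square root, is the delicate point; once the correct decomposition is in hand, every individual estimate is elementary.
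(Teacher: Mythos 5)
There is a genuine gap, and note first that the paper itself does not prove Proposition \ref{prop2a}: it is quoted from \cite[Proposition 3.2]{NZ}, so the comparison below is with the argument given there. Your opening reduction and the identity $\E(h(F))-\E(h(Z))=\E(f'(F)(1-D_uF))$ are fine, but the decomposition you then propose is circular. Writing $1-D_uF=(1-F^2)+\delta(Fu)$ and applying the duality (\ref{dua}) to the second summand gives $\E(f'(F)\delta(Fu))=\E(f''(F)FD_uF)$, while integrating the factor $F^2=F\delta(u)$ by parts in the first summand gives $\E(f'(F)F^2)=\E(f''(F)FD_uF)+\E(f'(F)D_uF)$; adding the two pieces returns exactly $\E(f'(F)(1-D_uF))$ and produces neither $\E(F^3)$ nor any new information. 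More seriously, the mechanism you describe for the main term cannot work: a quantity of the form $\E(\Phi(F)(D_uF-1)\Psi)$ with $\Phi,\Psi$ bounded is, by Cauchy--Schwarz, at most a constant times $\sqrt{\Var(D_uF)}$ --- the square root again, which is precisely the bound of Proposition \ref{prop2} you are trying to improve. To obtain $\Var(D_uF)$ \emph{linearly} one must manufacture a term that is \emph{quadratic} in $1-D_uF$.

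The missing idea in \cite{NZ} is to apply Stein's equation a second time, to the new test function $\psi=f_h'$, rather than manipulating the Malliavin side with the same $f$. Since $\E(D_uF)=\E(F\delta(u))=\E(F^2)=1$, the constant $\E(\psi(Z))$ can be inserted for free, so $\E(\psi(F)(1-D_uF))=\E\bigl((f_\psi'(F)-Ff_\psi(F))(1-D_uF)\bigr)$, where $f_\psi$ solves (\ref{stein}) for $\psi$ and therefore comes with \emph{controlled} bounds on $\|f_\psi\|_\infty$ and $\|f_\psi'\|_\infty$; this is why no uncontrolled $f''$ or $f'''$ ever appears and nothing needs to be ``re-integrated away''. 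A single integration by parts on $\E(Ff_\psi(F)(1-D_uF))$ then yields
\[
\E\bigl(f_h'(F)(1-D_uF)\bigr)=\E\bigl(f_\psi'(F)(1-D_uF)^2\bigr)+\E\bigl(f_\psi(F)\,D_u^2F\bigr),
\]
whose first term is bounded by $\|f_\psi'\|_\infty\,\Var(D_uF)$: the variance appears linearly because the factor is squared, not because of a Cauchy--Schwarz applied to a linear factor. The third cumulant enters through the identity $\E(D_u^2F)=\E(FD_uF)=\tfrac12\E(F^3)$ when one centers $D_u^2F$ in the second term, and one further iteration of the same two steps applied to $\E\bigl(f_\psi(F)(D_u^2F-\E(D_u^2F))\bigr)$ produces the remaining terms $\E(|D_u^2F|^2)$ and $\E(|D_u^3F|)$. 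None of these three devices --- the second Stein equation for $f_h'$, the resulting quadratic factor $(1-D_uF)^2$, and the identity $\E(D_u^2F)=\tfrac12\E(F^3)$ --- is present in your sketch, so the proposal as written does not lead to the stated bound.
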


In the next proposition we present a new estimate for the Wasserstein's distance between a random variable $F=\delta^2(v)$ and a $N(0,1)$ random variable obtained using  Stein's method and Malliavin calculus.

\begin{proposition} \label{prop3}
Assume that $v \in \dom \, \delta^2$, $F= \delta^2(v) \in \mathbb{D}^{2,2}$ and $\mathbb{E}(F^2)=1$. Then, 
$$
d_{W}(F,Z) \leq  \sqrt{2/\pi}  \sqrt{\Var \left( \langle D^2F,v \rangle_{\HH^{\otimes 2}}\right)}+2 \mathbb{E}\left(\left|\langle DF\otimes DF,v \rangle_{\HH^{\otimes 2}}\right|\right).
 $$
\end{proposition}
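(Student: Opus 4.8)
The plan is to start from the Stein bound for the Wasserstein distance, namely
\[
d_{W}(F,Z) \le \sup_{f\in\mathcal{F}_W} \bigl|\EE\bigl(f'(F)-Ff(F)\bigr)\bigr|,
\]
and to rewrite $\EE(Ff(F))$ using the iterated divergence identity. Since $F=\delta^2(v)$, the duality relation \eqref{dua2} with $k=2$ gives $\EE(Ff(F)) = \EE\bigl(\langle D^2(f(F)), v\rangle_{\HH^{\otimes 2}}\bigr)$, provided $f(F)\in\DD^{2,2}$; this holds because $f\in\mathcal{C}^2$ with bounded derivatives up to order two and $F\in\DD^{2,2}$. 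The key computation is then the chain rule for the second Malliavin derivative:
\[
D^2\bigl(f(F)\bigr) = f'(F)\,D^2F + f''(F)\, DF\otimes DF.
\]
Substituting this into the duality identity yields
\[
\EE(Ff(F)) = \EE\bigl(f'(F)\,\langle D^2F, v\rangle_{\HH^{\otimes 2}}\bigr) + \EE\bigl(f''(F)\,\langle DF\otimes DF, v\rangle_{\HH^{\otimes 2}}\bigr).
\]

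With this in hand, I would write
\[
\EE\bigl(f'(F)-Ff(F)\bigr) = \EE\Bigl(f'(F)\bigl(1-\langle D^2F, v\rangle_{\HH^{\otimes 2}}\bigr)\Bigr) - \EE\bigl(f''(F)\,\langle DF\otimes DF, v\rangle_{\HH^{\otimes 2}}\bigr),
\]
and estimate the two terms separately. For the second term, use $\|f''\|_\infty\le 2$ to bound it by $2\,\EE\bigl(|\langle DF\otimes DF, v\rangle_{\HH^{\otimes 2}}|\bigr)$, which is exactly the second term in the claimed bound. For the first term, apply $\|f'\|_\infty\le\sqrt{2/\pi}$ to get a bound of $\sqrt{2/\pi}\,\EE\bigl|1-\langle D^2F, v\rangle_{\HH^{\otimes 2}}\bigr|$, then Cauchy--Schwarz to pass to $\sqrt{2/\pi}\,\sqrt{\EE\bigl[(1-\langle D^2F, v\rangle_{\HH^{\otimes 2}})^2\bigr]}$. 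The remaining point is to identify $\EE\bigl(\langle D^2F, v\rangle_{\HH^{\otimes 2}}\bigr)$ with $\EE(F^2)=1$: indeed, applying \eqref{dua2} with $F$ itself in place of $f(F)$ (i.e.\ using $D^2F$ directly) gives $\EE\bigl(\langle D^2F, v\rangle_{\HH^{\otimes 2}}\bigr) = \EE(F\,\delta^2(v)) = \EE(F^2) = 1$. Hence $\EE\bigl[(1-\langle D^2F, v\rangle_{\HH^{\otimes 2}})^2\bigr] = \Var\bigl(\langle D^2F, v\rangle_{\HH^{\otimes 2}}\bigr)$, and combining the two estimates gives the stated inequality.

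The main technical obstacle is justifying the manipulations rigorously under only the hypothesis $F=\delta^2(v)\in\DD^{2,2}$, $v\in\dom\,\delta^2$: one must check that $f(F)\in\DD^{2,2}$ so that \eqref{dua2} applies, that the chain rule for $D^2(f(F))$ is valid in this Sobolev setting, and that all the pairings $\langle D^2F, v\rangle_{\HH^{\otimes 2}}$ and $\langle DF\otimes DF, v\rangle_{\HH^{\otimes 2}}$ are genuinely integrable so the expectations make sense — the latter following from Cauchy--Schwarz together with $DF\in L^4(\Omega;\HH)$, which in turn would need $F\in\DD^{1,4}$; if that is not granted by $F\in\DD^{2,2}$ alone one may need a density/approximation argument or a mild strengthening, but in the applications (Theorem \ref{thm2}) the regularity $A(g)\in\DD^{2,6}$ amply covers it. Everything else is the routine Stein--Malliavin bookkeeping already used in Propositions \ref{prop2} and \ref{prop2a}.
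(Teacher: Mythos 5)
Your proposal is correct and follows essentially the same route as the paper's own proof: the Stein bound (\ref{W}), the duality (\ref{dua2}) applied to $f(F)$, the chain rule $D^2(f(F))=f'(F)D^2F+f''(F)\,DF\otimes DF$, the bounds $\|f'\|_\infty\le\sqrt{2/\pi}$ and $\|f''\|_\infty\le 2$, and the identification $\EE(\langle D^2F,v\rangle_{\HH^{\otimes 2}})=\EE(F^2)=1$ followed by Cauchy--Schwarz. Your closing remarks on the technical justifications are reasonable but not pursued in the paper either, so there is nothing further to reconcile.
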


\begin{proof}
By the duality relation (\ref{dua2}),  $\mathbb{E}\left(F\delta^2(v)\right)=\mathbb{E}\left(\langle D^2F,v \rangle_{\HH^{\otimes2}}\right)$.
As a consequence,  using (\ref{W}) we can write
 \begin{align*}
    d_{W}(F,Z) & \leq \sup_{f \in \mathcal{F}_W} |\mathbb{E}\left(f'(F)\right)-\mathbb{E}\left(Ff(F)\right)|=\sup_{f \in \mathcal{F}_W}|\mathbb{E}\left(f'(F)\right)-\mathbb{E}\left(\delta^2 (v) f(F)\right)| \\
   & =\sup_{f \in \mathcal{F}_W} |\mathbb{E}\left(f'(F)\right)-\mathbb{E}\left(\langle D^2(f(F)),v\rangle_{\HH^{\otimes 2}}\right)| \\
   & =\sup_{f\in \mathcal{F}_W}|\mathbb{E}\left(f'(F)\right)-\mathbb{E}\left( f'(F) \langle D^2F,v\rangle_{\HH^{\otimes2}}\right)-\mathbb{E}\left( f''(F) \langle DF\otimes DF,v \rangle _{\HH^{\otimes2}} \right)| \\
   & \leq  \sqrt{2/\pi}  \mathbb{E}\left(|1-\langle D^2F,v\rangle_{\HH^{\otimes 2}}|\right)+2\mathbb{E}\left(|\langle DF\otimes DF,v \rangle_{\HH^{\otimes 2}}
   |\right).
\end{align*}
Now, since $1=\mathbb{E}\left(F^2\right)=\mathbb{E}\left(F\delta^2(v)\right)=\mathbb{E}\left(\langle D^2F,v\rangle_{\HH^{\otimes2}}\right)$, using Cauchy-Schwarz inequality, we get 
\begin{align*}
    \mathbb{E}\left(|1-\langle D^2F,v\rangle_{\HH^{\otimes2}} |\right
    )\leq \sqrt{\mathbb{E} \left(\left|\mathbb{E}\left(\langle D^2F,v\rangle_{\HH^{\otimes2}}\right)-\langle D^2F,v\rangle_{\HH^{\otimes2}}\right| ^2\right)}=\sqrt{\Var(\langle D^2F,v\rangle_{\HH^{\otimes2}})} \ ,
\end{align*}
which concludes our proof.
\end{proof}

\subsection{Some basic inequalities}

In this subsection we recall  several inequalities proved in \cite{NZ} (see Lemmas 6.6, 6.7 and 6.8), which can be deduced from  the Brascamp-Lieb inequality (see   \cite{bl}) or just using
  H\"older's and Young's convolution inequalities. 

 \begin{lemma}
 Fix an integer $M\ge 2$. Let  $f$ be a non-negative function on the integers and set ${\bf k} = (k_1, \dots, k_M)$. Then, we have:
 \begin{itemize}
 \item[(i)]    For any vector ${\bf v} \in \R^M$  whose  components are $1$ or $-1$
\begin{equation}  \label{equ6a}
  \sum_{  {\bf k} \in \mathbb{Z}^M}  f({\bf k}  \cdot  {\bf v} )   \prod _{j=1} ^M f(k_j)    \le C \left(\sum_{k\in \mathbb{Z}} f(k)^{1+ \frac 1M}\right)^M.
\end{equation}
\item[(ii)] For any vector ${\bf v} \in \R^M$  whose  components are  $0$, $1$ or $-1$, assuming $\sum_{k\in \mathbb{Z}}  f(k)^2 <\infty$, 
  \begin{equation}  \label{equ6b}
  \sum_{  {\bf k} \in \mathbb{Z}^M}  f({\bf k}  \cdot  {\bf v} )   \prod _{j=1} ^M f(k_j)    \le C \left(\sum_{k\in \mathbb{Z}} f(k)\right)^{M-1}.
\end{equation} 
  \item[(iii)] Suppose $M\ge 3$. Let  ${\bf v}, {\bf w} \in \R^M$  be linearly independent vectors,  whose  components are  $0$, $1$ or $-1$. Suppose $\sum_{k\in \mathbb{Z}}  f(k)^2 <\infty$. Then,
  \begin{equation}  \label{equ6c}
  \sum_{  {\bf k} \in \mathbb{Z}^M}    f({\bf k}  \cdot  {\bf v} )  f({\bf k}  \cdot  {\bf w} )  \prod _{j=1} ^M f(k_j)    \le C \left(\sum_{k\in \mathbb{Z}} f(k)\right)^{M-2}.
  \end{equation}
  \end{itemize}
\end{lemma}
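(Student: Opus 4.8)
These three bounds are \cite[Lemmas 6.6--6.8]{NZ}; here is the route I would take. The common first step is a change of summation variables turning the left--hand side into a convolution: for each $j$ with $v_j\neq0$ the map $k_j\mapsto v_jk_j$ is a bijection of $\mathbb{Z}$, so after applying it we may assume that every nonzero entry of ${\bf v}$ (and, in (iii), of ${\bf w}$, since the same reindexing keeps ${\bf w}\cdot{\bf k}$ a $\{0,\pm1\}$--combination) equals $1$, at the price of replacing some factors $f(k_j)$ by $f(-k_j)$; as all $\ell^p(\mathbb{Z})$ norms are invariant under $k\mapsto-k$, this costs nothing. Relabelling so that the $r$ nonzero entries of ${\bf v}$ come first, the $M-r$ passive variables factor out a term $\big(\sum_kf(k)\big)^{M-r}$ and the remaining sum becomes $\sum_{m\in\mathbb{Z}}f(m)(g_1*\cdots*g_r)(m)$ with each $g_i\in\{f,f(-\cdot)\}$. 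As the authors remark, one can instead run everything through the discrete Brascamp--Lieb inequality \cite{bl} applied to this inner sum with suitably mixed exponents; the arguments below are the hands--on version of that.

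For (i) we have $r=M$: H\"older's inequality with exponents $1+\tfrac1M$ and $M+1$ gives $\sum_mf(m)(g_1*\cdots*g_M)(m)\le\|f\|_{\ell^{1+1/M}}\|g_1*\cdots*g_M\|_{\ell^{M+1}}$, and Young's convolution inequality with all $M$ input exponents equal to $1+\tfrac1M$ (the admissible case, since $\tfrac{M^2}{M+1}=(M-1)+\tfrac1{M+1}$) bounds the last norm by $\|f\|_{\ell^{1+1/M}}^M$; multiplying gives $\|f\|_{\ell^{1+1/M}}^{M+1}=\big(\sum_kf(k)^{1+1/M}\big)^M$, i.e.\ (i) with $C=1$. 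For (ii) I may assume $\sum_kf(k)<\infty$ (otherwise the right--hand side is infinite), and then Cauchy--Schwarz together with the iterated estimate $\|g*h\|_{\ell^2}\le\|g\|_{\ell^1}\|h\|_{\ell^2}$ gives $\sum_mf(m)(g_1*\cdots*g_r)(m)\le\|f\|_{\ell^2}\|g_1*\cdots*g_r\|_{\ell^2}\le\|f\|_{\ell^2}^2\|f\|_{\ell^1}^{r-1}$; multiplying by the passive factor $\|f\|_{\ell^1}^{M-r}$ yields $\big(\sum_kf(k)^2\big)\big(\sum_kf(k)\big)^{M-1}$, the finite prefactor $\sum_kf(k)^2$ being absorbed into $C$.

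The step I expect to be the main obstacle is (iii), where the linear independence of ${\bf v}$ and ${\bf w}$ must really be used. Since the $2\times M$ matrix with rows ${\bf v},{\bf w}$ has rank $2$, there are columns $a\neq b$ with $\det\begin{pmatrix}v_a&v_b\\w_a&w_b\end{pmatrix}\neq0$; relabel $\{a,b\}=\{1,2\}$. Freezing $k_3,\dots,k_M$, I would bound the inner sum $S=\sum_{k_1,k_2}f({\bf v}\cdot{\bf k})f({\bf w}\cdot{\bf k})f(k_1)f(k_2)$ by Cauchy--Schwarz, pairing $f({\bf v}\cdot{\bf k})f(k_i)$ against $f({\bf w}\cdot{\bf k})f(k_{3-i})$, that is $S\le\big(\sum_{k_1,k_2}f({\bf v}\cdot{\bf k})^2f(k_i)^2\big)^{1/2}\big(\sum_{k_1,k_2}f({\bf w}\cdot{\bf k})^2f(k_{3-i})^2\big)^{1/2}$ for a choice of $i\in\{1,2\}$. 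A short case check shows that invertibility of the $2\times2$ block forces one of the two choices of $i$ to have the property that in each resulting factor the variable summed first has coefficient $\pm1$ in the relevant linear form (for $i=1$ one needs $v_2\neq0$ and $w_1\neq0$, for $i=2$ one needs $v_1\neq0$ and $w_2\neq0$, and the negation of both contradicts $v_1w_2\neq v_2w_1$); then a bijection argument evaluates each factor to $\|f\|_{\ell^2}^4$, so $S\le\|f\|_{\ell^2}^4$ uniformly in the frozen variables. Summing $S\le\|f\|_{\ell^2}^4$ against $\prod_{j=3}^Mf(k_j)$ over $k_3,\dots,k_M$ then bounds the left--hand side of (iii) by $\big(\sum_kf(k)^2\big)^2\big(\sum_kf(k)\big)^{M-2}$, again with the finite prefactor absorbed into $C$. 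The only genuinely fiddly points are this case analysis (equivalently: verifying that the Brascamp--Lieb datum for the four functionals ${\bf v}\cdot,\ {\bf w}\cdot,\ \pi_1,\ \pi_2$ restricted to the $(k_1,k_2)$--plane is feasible, i.e.\ that no line is the kernel of three of them) and the routine bookkeeping in the initial reduction to a convolution.
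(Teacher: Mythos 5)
Your proof is correct, and it follows exactly the route the paper itself indicates for this lemma (which it states without proof, citing Lemmas 6.6--6.8 of \cite{NZ}): H\"older's and Young's convolution inequalities after reducing each sum to a convolution, with the exponent bookkeeping in (i) and the determinant case analysis in (iii) both checking out. The only implicit assumption worth recording is that ${\bf v}\neq 0$ in (ii) (i.e.\ $r\ge 1$), which is clearly what is intended since otherwise the bound cannot hold with a constant depending on $f$ only through $\sum_k f(k)^2$.
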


 \section{Some remarks on Wiener chaos expansions}
In this section we present some useful results on Wiener chaos expansions. 
We first recall a formula for the expectation of the product of  multiple stochastic integrals.
\begin{lemma}  \label{lem1}
Let $q_i \ge 1$ be integers,   and consider functions $f_i \in \HH^{\odot q_i}$, $i=1,\dots, M$. Then,
\[
 \E\left( \prod _{i=1}^M I_{q_i}( f_i)  \right)
= \sum_{  \beta \in  \mathcal{D}_q }   C_{q,\beta}      \left( \otimes_{i=1}^M f_i\right)_\beta,
\]
where
 \[
C_{q,\beta}  = \frac {  \prod_{i=1}^M q_i! }{ \prod_{1\le j<k\le M} \beta_{jk} !},
\]
  $ \mathcal{D}_q $ is the set of nonnegative integers $\beta_{jk}$,  $1\le j<k \le M$ satisfying
$$
q_i=  \sum_{j \, {\rm or} \, k =i} \beta_{jk}, \quad i=1\dots, M\ ,
$$
and 
$    \left( \otimes_{i=1}^M f_i\right)_\beta$ denotes the contraction of  $\beta_{jk}$ indexes between $f_j$ and $f_k$, for all $1\le j<k \le M$.
\end{lemma}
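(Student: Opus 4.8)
The cleanest route is through the generating function of the Hermite polynomials combined with a multilinearity and density reduction; I would avoid the alternative of iterating the product formula $I_p(f)I_q(g)=\sum_r r!\binom{p}{r}\binom{q}{r}I_{p+q-2r}(f\otimes_r g)$, which produces the same identity but with considerably heavier bookkeeping. First I would note that both sides of the claimed identity are continuous multilinear functionals of $(f_1,\dots,f_M)$ on $\HH^{\odot q_1}\times\cdots\times\HH^{\odot q_M}$: the left-hand side because $I_{q_i}$ is an isometry onto the $q_i$-th chaos and, by hypercontractivity, $\|I_{q_i}(f_i)\|_{L^M(\Omega)}\le C\|f_i\|_{\HH^{\otimes q_i}}$, so that $\E\big|\prod_i I_{q_i}(f_i)\big|\le C\prod_i\|f_i\|_{\HH^{\otimes q_i}}$ by H\"older; and the right-hand side because each contraction $(\otimes_i f_i)_\beta$ is built from inner products in $\HH$ and is therefore a bounded multilinear map. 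Since finite linear combinations of tensors $\varphi^{\otimes q}$, $\varphi\in\HH$, are dense in $\HH^{\odot q}$ (polarization), it suffices to prove the identity for $f_i=\varphi_i^{\otimes q_i}$, and by homogeneity in each $\varphi_i$ I may assume $\|\varphi_i\|_{\HH}=1$, so that $I_{q_i}(f_i)=H_{q_i}(W(\varphi_i))$ and $(W(\varphi_1),\dots,W(\varphi_M))$ is a centered Gaussian vector with covariance $\langle\varphi_j,\varphi_k\rangle_{\HH}$.

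For such rank-one tensors the contraction $(\otimes_{i=1}^M\varphi_i^{\otimes q_i})_\beta$ pairs $\beta_{jk}$ of the $q_j$ slots of the $j$-th factor with $\beta_{jk}$ of the $q_k$ slots of the $k$-th factor; the constraints $q_i=\sum_{j\ \text{or}\ k=i}\beta_{jk}$ defining $\mathcal{D}_q$ say exactly that every slot is paired, so no free index survives and the contraction equals the scalar $\prod_{1\le j<k\le M}\langle\varphi_j,\varphi_k\rangle_{\HH}^{\beta_{jk}}$. Hence the target reduces to the classical diagram formula $\E\big(\prod_i H_{q_i}(W(\varphi_i))\big)=\sum_{\beta\in\mathcal{D}_q}C_{q,\beta}\prod_{j<k}\langle\varphi_j,\varphi_k\rangle_{\HH}^{\beta_{jk}}$. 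To establish it I would start from $\sum_{n\ge0}\frac{t^n}{n!}H_n(x)=e^{tx-t^2/2}$, take the product over $i=1,\dots,M$ with parameters $t_i$ in a small polydisc around the origin, take expectations, and evaluate the Gaussian Laplace transform, using $\|\varphi_i\|_{\HH}=1$, as
\[
\E\Big(\prod_{i=1}^M e^{t_iW(\varphi_i)-t_i^2/2}\Big)=\exp\Big(\tfrac12\Var\big(\textstyle\sum_i t_iW(\varphi_i)\big)-\tfrac12\textstyle\sum_i t_i^2\Big)=\exp\Big(\sum_{1\le j<k\le M}t_jt_k\langle\varphi_j,\varphi_k\rangle_{\HH}\Big).
\]
Expanding the last exponential as $\prod_{j<k}\sum_{\beta_{jk}\ge0}\frac{(t_jt_k)^{\beta_{jk}}}{\beta_{jk}!}\langle\varphi_j,\varphi_k\rangle_{\HH}^{\beta_{jk}}$ and matching the coefficient of $\prod_i t_i^{q_i}$ on both sides gives $\frac{1}{\prod_i q_i!}\E\big(\prod_i H_{q_i}(W(\varphi_i))\big)=\sum_{\beta\in\mathcal{D}_q}\prod_{j<k}\frac{\langle\varphi_j,\varphi_k\rangle_{\HH}^{\beta_{jk}}}{\beta_{jk}!}$, which is the claim after multiplying by $\prod_i q_i!$; the general case then follows by multilinearity, density and continuity.

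There is no serious obstacle here; the points that need genuine care are the following. One must justify interchanging expectation with the infinite product of power series, which is handled by restricting the $t_i$ to a sufficiently small polydisc and invoking dominated convergence together with $\E\exp\big(\sum_i|t_i|\,|W(\varphi_i)|\big)<\infty$. One must carefully verify the combinatorial identification of $(\otimes_i\varphi_i^{\otimes q_i})_\beta$ with $\prod_{j<k}\langle\varphi_j,\varphi_k\rangle_{\HH}^{\beta_{jk}}$, i.e.\ that the defining relations of $\mathcal{D}_q$ force a complete pairing of all $\sum_i q_i$ tensor slots. And one should observe that when no $\beta\in\mathcal{D}_q$ exists (for instance when $\sum_i q_i$ is odd) both sides vanish, the right-hand side being an empty sum — so the formula is correct in that degenerate case as well. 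The expected main difficulty is thus purely bookkeeping: getting the constant $C_{q,\beta}$ to come out exactly, which is precisely what the coefficient matching in the generating-function identity delivers automatically.
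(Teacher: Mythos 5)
Your proof is correct, but it follows a genuinely different route from the one in the paper. The paper obtains the lemma as an immediate corollary of the general product formula for multiple Wiener--It\^o integrals (Theorem 6.1.1 in Peccati--Taqqu): the product $\prod_i I_{q_i}(f_i)$ is expanded as a sum of multiple integrals of contractions indexed by partitions, the expectation kills every term except those in the zeroth chaos (i.e.\ those where all indices are contracted, $\gamma_1=\cdots=\gamma_M=0$), and counting the number of pairings with prescribed cardinalities $\beta_{jk}$ produces the constant $C_{q,\beta}$. You instead reduce to rank-one symmetric tensors by multilinearity, continuity (via hypercontractivity) and polarization density, and then prove the resulting Hermite diagram formula from scratch with the generating function $\sum_n t^nH_n(x)/n!=e^{tx-t^2/2}$ and coefficient matching; your domination $\sum_n\frac{|t|^n}{n!}|H_n(x)|\le e^{|t||x|+t^2/2}$ does legitimately justify the interchange of expectation and series. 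What your argument buys is self-containedness and elementary character --- no appeal to the full product formula, only to the Gaussian Laplace transform --- and the combinatorial constant falls out of the Taylor coefficients with no pairing count. What the paper's route buys is brevity and the fact that it operates directly on general kernels $f_i\in\HH^{\odot q_i}$, so no density and continuity reduction is needed; moreover the intermediate identity (\ref{product}), with the non-constant chaos terms retained, is reused elsewhere in the paper (e.g.\ in the proof of Proposition \ref{prop1}), which your expectation-only derivation does not supply. Both arguments give the same constant, as one can check on $\E(H_2(X)^3)=8$.
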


\begin{proof}
The product formula for multiple stochastic integrals (see, for instance,  \cite[Theorem 6.1.1]{PT}, or  formula (2.1) in \cite{BN}  for $M=2$)   says that
\begin{equation}  \label{product}
 \prod _{i=1}^M I_{q_i}( f_i)  
= \sum_{  \mathcal{P}, \psi }    I_{\gamma_1 + \cdots  +\gamma_M} \left(   \left( \otimes_{i=1}^M f_i\right)_{\mathcal{P}, \psi} \right),
\end{equation}  
where  $\mathcal{P}$ denotes the set of all partitions $\{1,\dots, q_i\} = J_i \cup \left(\cup_{ k=1,\dots, M, k \not=i} I_{ik}\right) $, where for any  $i,k =1,\dots,  M$, $I_{ik}$ and $I_{ki}$ have the same cardinality, $\psi_{ik}$ is a bijection between  $ I_{ik} $
 and $I_{ki}$ and $\gamma_i = | J_i|$. Moreover,  $ \left( \otimes_{i=1}^M f_i\right)_{\mathcal{P}, \psi}$ denotes the contraction of
 the indexes $\ell$ and $\psi_{ik} (\ell)$ for any  $\ell  \in I_{ik}$ and any  $i,k =1\dots, M$.
 Then, the expectation $\E\left( \prod _{i=1}^M I_{q_i}( f_i)   \right)$  corresponds to the case $\gamma_1= \cdots = \gamma_M =0$, and, if  we specify the number of partitions for fixed cardinalities $\beta_{jk}$,  we obtain the desired formula.
\end{proof}

 \subsection{Convergence of truncated expansions}
In general, given a random variable $F\in L^2(\Omega)$   with chaos expansion
(\ref{chaos}),  the fact that $\E( |F|^p) <\infty$ for some $p>2$ does not imply that the chaos expansion converges in $L^p(\Omega)$.
The next proposition  provides a partial result in this direction  for $p=2M$ and in the one-dimensional case, assuming that all the coefficients are nonnegative.

\begin{proposition}  \label{prop1}
 Consider a function $g\in L^2(\RR, \gamma)$, with an expansion of the form $g(x)=\sum_{q=0}^\infty  c_q H_q(x)$.
Suppose that $c_q \ge 0$ for each $q\ge 0$ and $g\in L^{2M}(\RR,\gamma)$ for some $M\ge 1$.  Consider the truncated sequence
\begin{equation} \label{truncated}
g^{(N)} := \sum_{q=0}^N c_q H_q.
\end{equation}
 Then $( g^{(N)})^M$  converges in $L^2(\RR,\gamma)$ to $g^{M}$.  
\end{proposition}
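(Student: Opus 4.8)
The plan is to exploit the nonnegativity of the coefficients $c_q$ to reduce $L^2$-convergence of $(g^{(N)})^M$ to a monotone-convergence argument combined with domination. First I would expand the power $(g^{(N)})^M$ into Hermite polynomials. Writing $g^{(N)} = \sum_{q=0}^N c_q H_q$ and using the linearization formula for products of Hermite polynomials (the classical identity expressing $H_{q_1}\cdots H_{q_M}$ as a nonnegative-integer combination of Hermite polynomials $H_r$), we get
\[
(g^{(N)})^M = \sum_{r=0}^\infty b_r^{(N)} H_r,
\]
where each coefficient $b_r^{(N)}$ is a finite sum of products $c_{q_1}\cdots c_{q_M}$ times nonnegative linearization constants, hence $b_r^{(N)} \ge 0$, and $b_r^{(N)}$ is nondecreasing in $N$ (enlarging $N$ only adds more nonnegative terms). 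Let $b_r := \lim_{N\to\infty} b_r^{(N)} = \sup_N b_r^{(N)} \in [0,\infty]$; by the same computation applied formally to $g$ (which is legitimate since all terms are nonnegative, so rearrangement is harmless at the level of formal series), $b_r$ are exactly the Hermite coefficients one would assign to the formal power "$g^M$".

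The key step is to show $\sum_r r!\, b_r^2 = \|g^M\|_{L^2(\gamma)}^2 < \infty$, which is where the hypothesis $g \in L^{2M}(\gamma)$ enters. Here I would argue as follows: since $c_q \ge 0$, the partial sums $g^{(N)}$ satisfy $0 \le g^{(N)} \uparrow$ on the set where $g \ge 0$ — more carefully, one should note that $g^{(N)}(x) \to g(x)$ in $L^2(\gamma)$ and pass to an a.e.-convergent subsequence, but the cleaner route is: for nonnegative coefficients, $g^{(N)}$ need not be pointwise monotone, so instead I would use that $\mathbb{E}[(g^{(N)})^{2M}]$ is controlled. Actually the honest approach: $(g^{(N)})^M \to$ something in probability (along a subsequence, since $g^{(N)} \to g$ in $L^2$ hence a.e. along a subsequence), and the candidate limit is $g^M$; meanwhile $\|(g^{(N)})^M\|_{L^2(\gamma)}^2 = \sum_r r!\,(b_r^{(N)})^2$ is nondecreasing in $N$ by the monotonicity of each $b_r^{(N)}$, with $\sup_N \|(g^{(N)})^M\|_{L^2}^2 = \sum_r r!\, b_r^2$ by monotone convergence in $r$. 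The crucial inequality is $\|(g^{(N)})^M\|_{L^2(\gamma)} \le \|g^{(N)}\|_{L^{2M}(\gamma)}^M$, and one needs $\|g^{(N)}\|_{L^{2M}(\gamma)} \to \|g\|_{L^{2M}(\gamma)}$, or at least $\limsup_N \|g^{(N)}\|_{L^{2M}(\gamma)} \le \|g\|_{L^{2M}(\gamma)}$. This last point is the \emph{main obstacle}: convergence of truncated Hermite expansions in $L^{2M}$ is exactly the kind of statement the preamble warns is false in general, so I cannot invoke it directly. I would circumvent it using nonnegativity of the $c_q$: by a Gaussian hypercontractivity / multiplier argument, or more concretely by writing $g^{(N)} = P_{t_N} g$ is \emph{not} quite right, but one can compare $g^{(N)}$ with Mehler-type averages — since $\sum c_q H_q$ has nonnegative coefficients, $g(x) = \int g\,d\gamma + (\text{nonneg. combination})$, and $\|g^{(N)}\|_{L^{2M}}$ can be bounded by $\|g\|_{L^{2M}}$ via the fact that truncation is a contraction on $L^{2M}(\gamma)$ \emph{when applied to functions with nonnegative Hermite coefficients} — this follows because the truncation operator $g \mapsto g^{(N)}$ can be realized as a positivity-preserving Markovian-type operator on such functions (e.g.\ averaging $g^{(N)}$ as a limit of operators of the form $\sum_{q\le N} \mathbb{E}[\cdot\, H_q]H_q$ which on nonnegative-coefficient functions is a sub-Markov operator). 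I would make this precise via the identity $g^{(N)}(x) = \mathbb{E}[g(x)\, \mathbf{1}_{\le N}]$ interpreted through the kernel and Jensen's inequality.

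Once $\sum_r r!\, b_r^2 < \infty$ is established, the conclusion is routine: the function $G := \sum_r b_r H_r$ belongs to $L^2(\gamma)$, and $\|(g^{(N)})^M - G\|_{L^2(\gamma)}^2 = \sum_r r!\,(b_r - b_r^{(N)})^2 \to 0$ by dominated convergence in $r$ (dominated by $r!\, b_r^2$, summable, with $b_r^{(N)} \uparrow b_r$). It remains to identify $G$ with $g^M$: since $(g^{(N)})^M \to g^M$ along an a.e.-convergent subsequence (as $g^{(N)} \to g$ in $L^2$, hence a.e.\ along a subsequence, and $x \mapsto x^M$ is continuous) while simultaneously $(g^{(N)})^M \to G$ in $L^2(\gamma)$, uniqueness of limits in measure forces $G = g^M$ a.e. This finishes the proof. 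The only genuinely delicate point, as noted, is the uniform $L^{2M}$-bound on the truncations, and I would devote the bulk of the written proof to that, leveraging $c_q \ge 0$ throughout.
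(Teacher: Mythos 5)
Your overall skeleton is sound: expanding $(g^{(N)})^M$ in Hermite polynomials with nonnegative coefficients $b_r^{(N)}$ that increase in $N$, and finishing by dominated convergence once $\sum_r r!\,b_r^2<\infty$ is known, is exactly the right frame, and your identification of the limit as $g^M$ via an a.e.\ convergent subsequence is fine. The gap is precisely at the point you yourself flag as the main obstacle, and the mechanism you propose there does not work. The truncation $g\mapsto g^{(N)}$ is the integral operator with kernel $K_N(x,y)=\sum_{q\le N}H_q(x)H_q(y)/q!$ (the Hermite Dirichlet kernel), which changes sign; it is therefore not positivity-preserving, not a sub-Markov operator, and no Jensen-type argument applies. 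Nonnegativity of the coefficients of $g$ does not change the operator, and coefficientwise domination $0\le c_q\mathbf{1}_{\{q\le N\}}\le c_q$ does not yield pointwise domination of the functions (Hermite polynomials change sign), so it gives no direct control of $\|g^{(N)}\|_{L^{2M}(\gamma)}$ by $\|g\|_{L^{2M}(\gamma)}$. One can write $\E\bigl[(g^{(N)})^{2M}\bigr]$ as a sum of nonnegative terms increasing to $\sum_{q_1,\dots,q_{2M}}\prod_i c_{q_i}\E\bigl[\prod_i H_{q_i}\bigr]$, but proving that this limit equals $\E\bigl[g^{2M}\bigr]$ rather than possibly $+\infty$ is essentially the proposition itself, so the argument as proposed is circular at its crucial step.

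The paper closes this gap by a different device: induction on $M$. Assuming $(g^{(N)})^{M-1}\to g^{M-1}$ in $L^2(\RR,\gamma)$, the $m$-th Hermite coefficient of $g^M$ (which lies in $L^2(\RR,\gamma)$ because $g\in L^{2M}(\RR,\gamma)$) is computed as $d_m=\frac1{m!}\E(g^M H_m)=\lim_N\frac1{m!}\E\bigl(g\,(g^{(N)})^{M-1}H_m\bigr)$, and a combinatorial bijection between the index sets appearing in the product formula shows that this limit is exactly $\lim_N d_{m,N}$ (your $b_r$). Thus $d_{m,N}\uparrow d_m$ with $\sum_m m!\,d_m^2=\|g^M\|^2_{L^2(\RR,\gamma)}<\infty$ known a priori, and dominated convergence over $m$ finishes the proof; no uniform $L^{2M}$ bound on the truncations is ever needed (it follows a posteriori). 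To salvage your route you would have to prove $\sup_N\|g^{(N)}\|_{L^{2M}(\RR,\gamma)}<\infty$ by some other means, and I do not see one that is not at least as hard as the proposition itself.
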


\begin{proof}
The proof will be done by induction on $M$.  The result is clearly true for $M=1$. Suppose that $M\ge 2$ and the result  holds for $M-1$.
Using the product formula for Hermite polynomials, which is a particular case of (\ref{product}),  we can write
\begin{align*}
(g^{(N)})^M &=    \sum_{q_1,\dots ,q_M=0}^N   \prod_{i=1}^M c_{q_i} H_{q_i}  \\
&=   \sum_{q_1,\dots ,q_M=0}^N  \left( \prod_{i=1}^M c_{q_i} \right)   \sum_{(\beta , \gamma)\in  \widehat{\mathcal{D}}_q} C_{q,\beta,\gamma}
H_{ \gamma_1 + \cdots + \gamma_M}\ ,
\end{align*}
where 
 \[
C_{q,\beta,\gamma}  = \frac {  \prod_{i=1}^M q_i! } { \prod_{i=1}^M  \gamma_i!  \prod_{1\le j<k\le M} \beta_{jk} !}\ ,
\]
and  $ \widehat{\mathcal{D}}_q $ is the set of nonnegative integers $\beta_{jk}$,  $1\le j<k \le M$ and $\gamma_i$, $1\le i\le M$, satisfying
\begin{equation} \label{beta}
q_i= \gamma_i+  \sum_{j \, {\rm or} \, k =i} \beta_{jk} ,\quad i=1,\dots,M\ .
\end{equation}
As a consequence, we obtain
$$
(g^{(N)})^M= \sum_{m=0} ^\infty d_{m,N} H_m\ ,
$$
where
$$
d_{m,N}= \sum_{q_1, \dots, q_M=0}^N \left(   \prod_{i=1}^M c_{q_i} \right) \sum_{(\beta,\gamma)\in   \widehat{\mathcal{D}}_q, \gamma_1 + \cdots + \gamma_M =m}C_{q,\beta,\gamma}\ .
$$
The function $g^M$ belongs to $L^2(\RR,\gamma)$. Therefore, it will have an expansion of the form
\[
g^M= \sum_{m=0} ^\infty d_{m} H_m\ .
\]
In order to compute the coefficients $d_m$, taking into account that $gH_m \in L^2(\RR,\gamma)$ and, by the induction hypothesis, $(g^{(N)})^{M-1}$ converges to $g^{M-1}$ in  $ L^2(\RR,\gamma)$ as $N\rightarrow \infty$, we can write
\begin{align*}
d_m =  \frac 1{m!} \E \left( g^M H_m \right) = \lim_{N\rightarrow \infty} \frac 1{m!} \E \left( g (g^{(N)})^{M-1}H_m\right).
\end{align*}
To compute the expectation  $\E \left( g (g^{(N)})^{M-1}H_m\right)$ we need the chaos expansion of  $(g^{(N)})^{M-1}H_m$:
\begin{align*}
(g^{(N)})^{M-1} H_m &=   \sum_{q_1,\dots ,q_{M-1}=0}^N  \prod_{i=1}^{M-1} c_{q_i}   \sum_{(\beta', \gamma')\in \widehat{\mathcal{D}}'_q} C_{q,\beta',\gamma'}
H_{ \gamma'_1 + \cdots + \gamma'_M}\ ,
\end{align*}
where 
 \[
C_{q,\beta' ,\gamma'}  = \frac {  m! \prod_{i=1}^{M-1} q_i! } {  \prod _{i=1}^M \gamma'_i !  \prod_{1\le j<k\le M} \beta'_{jk} !}\ ,
\]
 and $\widehat{\mathcal{D}}'_q$ is the set of $\beta$'s and $\gamma$'s such that   (\ref{beta}) holds  for $i=1, \dots, M-1$ and 
 $$
 m= \gamma_M + \sum_{j\, {\rm or } \, k =M} \beta'_{jk}\ .
 $$
As a consequence,
\begin{align*}
 \E \left( g (g^{(N)})^{M-1}H_m \right) = \sum_{q =0} ^\infty  q! c_{q}\sum_{q_1,\dots ,q_{M-1}=0}^N  \prod_{i=1}^{M-1} c_{q_i}   \sum_{(\beta' \gamma') \in \widehat{ \mathcal{D}}'_q, \gamma'_1 + \cdots+ \gamma'_M =q} C_{q,\beta', \gamma'}
 \end{align*}
 and, taking into account that the coefficients $c_q$ are nonnegative and  putting $q=q_M$,
   \begin{align*}
 d_m&=    \sum_{q_1,\dots ,q_{M}=0}^\infty  \prod_{i=1}^{M} c_{q_i}   \sum_{(\beta', \gamma')\in  \widehat{\mathcal{D}}'_q,
 \gamma'_1 + \cdots+ \gamma'_M =q_M
 } \frac {  \prod_{i=1}^{M} q_i! } {  \prod _{i=1}^M \gamma'_i !  \prod_{1\le j<k\le M} \beta'_{jk} !}\ .
\end{align*}
We claim that for any $(\beta',\gamma') \in \widehat{\mathcal{D}}'_q$ there exist a unique element   $(\beta,\gamma) \in \widehat{\mathcal{D}}_q$ such that
$$
\prod_{i=1}^M  \gamma_i!  \prod_{1\le j<k\le M} \beta_{jk} !=\prod_{i=1}^M  \gamma'_i!  \prod_{1\le j<k\le M} \beta'_{jk} !\ .
$$
 Indeed, it suffices to take $\beta_{jk} =\beta'_{jk}$ if $1\le j<k \le M-1$,  $\gamma_i = \beta'_{iM}$ for $i=1,\dots, M-1$,  $\gamma_M = \gamma'_M$, and $\beta_{jM} = \gamma'_j$ for
 $1\le j\le M-1$.
It follows that $\lim_{N\rightarrow \infty} d_{m,N} = d_m$.   This implies that
$(g^{(N)})^{M}$ converges in $L^2(\RR,\gamma)$ to $g^{M}$ and allows us to complete the proof.
\end{proof}

\subsection{The absolute value operator.}  Recall that $A$, defined in (\ref{ma2}) is the operator acting on $L^2(\R,\gamma)$ which replace the Hermite coefficients by its absolute values. Clearly, for any integer $k\ge 0$,  and for any $g\in \mathbb{D}^{k,2} (\R,\gamma)$, we have
\[
\| A(g) \| _{k,2} = \| g \| _{k,2}\ .
\]
Therefore, $g $ belongs to  $\mathbb{D}^{k,2} (\R,\gamma)$ if and only if $A(g) \in\mathbb{D}^{k,2} (\R,\gamma)$. If we consider functions in $L^p(\R,\gamma)$ for some real number $p>2$, we do not know whether $g\in L^p(\R,\gamma)$ implies $A(g)\in L^p(\R,\gamma)$. However, the following result holds.
\begin{lemma} Suppose that
 $A(g) \in \DD^{k,2M} (\R,\gamma)$ for some integers $M\ge 2$ and $k\ge 0$. Then $g \in \DD^{k,2M} (\R,\gamma)$. 
 \end{lemma}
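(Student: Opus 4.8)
The plan is to reduce the statement to the case $k=0$ and then combine the convergence result of Proposition~\ref{prop1} with the nonnegativity of the combinatorial coefficients in the expectation formula of Lemma~\ref{lem1}. First I would note that $A$ commutes with differentiation: if $g=\sum_{q}c_qH_q$, then for $0\le j\le k$ the $j$-th derivative has Hermite expansion $g^{(j)}=\sum_{m\ge0}\frac{(m+j)!}{m!}\,c_{m+j}\,H_m$, and since the weights $\frac{(m+j)!}{m!}$ are positive we get $A(g^{(j)})=(A(g))^{(j)}$. Hence $A(g)\in\DD^{k,2M}(\R,\gamma)$ forces $A(g^{(j)})\in L^{2M}(\R,\gamma)$ for all $j\le k$; moreover $A(g)\in\DD^{k,2M}\subset\DD^{k,2}$, so by the identity $\|A(h)\|_{k,2}=\|h\|_{k,2}$ noted above, $g\in\DD^{k,2}(\R,\gamma)$, and in particular each $g^{(j)}$ lies in $L^2(\R,\gamma)$. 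It therefore suffices to prove the case $k=0$, namely that $h\in L^2(\R,\gamma)$ together with $A(h)\in L^{2M}(\R,\gamma)$ implies $h\in L^{2M}(\R,\gamma)$; applying this to $h=g^{(0)},\dots,g^{(k)}$ then gives $g\in\DD^{k,2M}(\R,\gamma)$.

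For the case $k=0$, write $h=\sum_{q\ge0}c_qH_q$ and set $h^{(N)}=\sum_{q=0}^N c_qH_q$, $A(h)^{(N)}=\sum_{q=0}^N|c_q|H_q$. Expanding the $2M$-th power of the polynomial $h^{(N)}$ and identifying $H_q$ with the multiple integral $I_q(1^{\otimes q})$ in the canonical realization $\HH=\R$, Lemma~\ref{lem1} applied with $2M$ factors $f_i=1^{\otimes q_i}$ (every contraction of $1$ with itself being equal to $1$) gives
\[
\E\big(|h^{(N)}|^{2M}\big)=\sum_{q_1,\dots,q_{2M}=0}^N\Big(\prod_{i=1}^{2M}c_{q_i}\Big)\,a_{q_1,\dots,q_{2M}},\qquad a_{q_1,\dots,q_{2M}}:=\sum_{\beta\in\mathcal{D}_q}C_{q,\beta}\ge0.
\]
Since these coefficients are nonnegative, replacing each $c_{q_i}$ by $|c_{q_i}|$ can only increase the sum, whence
\[
\E\big(|h^{(N)}|^{2M}\big)\le\sum_{q_1,\dots,q_{2M}=0}^N\Big(\prod_{i=1}^{2M}|c_{q_i}|\Big)\,a_{q_1,\dots,q_{2M}}=\E\big((A(h)^{(N)})^{2M}\big).
\]

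Because $A(h)$ has nonnegative Hermite coefficients and lies in $L^{2M}(\R,\gamma)$, Proposition~\ref{prop1} gives $(A(h)^{(N)})^{M}\to A(h)^{M}$ in $L^2(\R,\gamma)$, so $\E\big((A(h)^{(N)})^{2M}\big)\to\E\big(A(h)^{2M}\big)<\infty$ and therefore $\sup_N\E(|h^{(N)}|^{2M})<\infty$. On the other hand $h^{(N)}\to h$ in $L^2(\R,\gamma)$ since $\{H_q/\sqrt{q!}\}_{q\ge0}$ is an orthonormal basis, so a subsequence converges $\gamma$-a.e.; Fatou's lemma then yields $\E(|h|^{2M})\le\liminf_N\E(|h^{(N)}|^{2M})<\infty$, i.e.\ $h\in L^{2M}(\R,\gamma)$, which completes the argument. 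The step I expect to require the most care is the inequality $\E(|h^{(N)}|^{2M})\le\E((A(h)^{(N)})^{2M})$: it hinges on every coefficient $a_{q_1,\dots,q_{2M}}$ produced by the product/expectation formula being nonnegative, which is exactly why the precise formula of Lemma~\ref{lem1} is needed rather than a crude bound; everything else is a routine application of Proposition~\ref{prop1}, the orthogonality of the Hermite polynomials, and Fatou's lemma.
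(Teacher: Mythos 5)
Your proof is correct, and its core is the same as the paper's: both arguments rest on the fact that the coefficients $C_{q,\beta}$ produced by the product/expectation formula of Lemma \ref{lem1} are nonnegative, so that the $2M$-th moment of a truncated expansion is dominated by the corresponding moment of the truncation of $A(g)$, which is controlled via Proposition \ref{prop1}. The execution differs in three ways that are worth recording. First, the paper splits $g=g_++g_-$ according to the sign of the coefficients and treats each piece separately, whereas you work with $g$ directly, using the termwise bound $\bigl|\prod_i c_{q_i}\bigr|\,a_{q_1,\dots,q_{2M}}\le \prod_i|c_{q_i}|\,a_{q_1,\dots,q_{2M}}$; this makes the decomposition unnecessary. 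Second, the paper writes $\E(g_+^{2M})=\lim_N\E((g_+^{(N)})^{2M})$ by invoking Proposition \ref{prop1} for $g_+$, which formally presupposes $g_+\in L^{2M}(\R,\gamma)$ --- the very thing being proved; your version avoids this by applying Proposition \ref{prop1} only to $A(g)$ (whose membership in $L^{2M}$ is a hypothesis) and then passing to $g$ itself with a uniform moment bound plus almost-everywhere convergence along a subsequence and Fatou's lemma. This is the cleaner way to close the argument. Third, the paper dismisses the case $k\ge1$ as ``similar,'' while you make the reduction explicit through the identity $A(g^{(j)})=(A(g))^{(j)}$, which holds because differentiation multiplies the Hermite coefficients by the positive weights $(m+j)!/m!$; combined with $\|A(h)\|_{k,2}=\|h\|_{k,2}$ to guarantee $g^{(j)}\in L^2(\R,\gamma)$, this legitimately reduces everything to $k=0$. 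In short, your argument proves the same statement by the same mechanism but is slightly more careful at the two points where the paper's write-up is terse.
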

 \begin{proof}
We will show the result only for $k=0$, the case $k\ge 1$  being similar.  Let $g=\sum_{q=d} ^\infty c_q H_q$ and define $g_+=\sum_{q=d} ^\infty c_q \mathbf{1}_{\{q: c_q>0\}} H_q$ and $g_-=\sum_{q=d} ^\infty c_q \mathbf{1}_{\{q: c_q<0\}} H_q$. Then $g=g_++g_-$.  We will show that $g_+\in L^{2M}(\R,\gamma)$, and in the same way one can prove that $g_-\in L^{2M}(\R,\gamma)$.
Using Proposition  \ref{prop1},  we can write
\begin{align*}
\E \left( g_+^{2M} \right) &=  \lim_{N\rightarrow \infty} \E \left( (g_+^{(N)})^{2M} \right)\\
&=
 \sum_{q_1,\dots ,q_{2M}=0}^\infty  \left( \prod_{i=1}^{2M} c_{q_i}\mathbf{1}_{\{q: c_q>0\}}  \right)    \sum_{\beta\in \mathcal{D}_q} 
  \frac {  \prod_{i=1}^{2M} q_i! } {      \prod_{1\le j<k\le 2M} \beta_{jk} !}\ ,
 \end{align*}
  where $ \mathcal{D}_q $ is the set of nonnegative integers $\beta_{jk}$,  $1\le j<k \le 2M$, satisfying
$q_i=   \sum_{j \, {\rm or} \, k =i} \beta_{jk},$ $i=1,\dots,2M$.
 Clearly, this implies that  $\E \left( g_+^{2M} \right) \le \E \left( A(g)^{2M} \right)<\infty$.
 \end{proof}
 
 \noindent

 The next lemma provides a criterion for a function $g$ to satisfy $A(g) \in  \mathbb{D}^{\ell,M}(\gamma)$ for  integers $\ell\ge 0$, $M\ge 3$.
 
 \begin{lemma} \label{lem3.3}  Fix  integers $\ell\ge 0$ and  $M\ge 3$. Let $g$ be a function in $g\in \mathbb{D}^{\ell,2} (\R,\gamma)$, with Hermite expansion $g= \sum_{k=0} ^\infty c_q H_q$. 
 Then,   $A(g) \in \mathbb{D}^{\ell,M}(\R, \gamma)$ if
   \begin{equation} \label{q3}
     \sum_{q=0}^{\infty} |c_q|q^{\frac\ell 2 -\frac 1 4} \sqrt{q!} (M-1)^{\frac  q2 }<\infty .
     \end{equation}
     \end{lemma}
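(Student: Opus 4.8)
The plan is to estimate the Sobolev norm of $A(g)$ directly. In the one-dimensional model $(\Omega,\mathcal{F},P)=(\R,\mathcal{B}(\R),\gamma)$ the Malliavin derivative is the ordinary derivative, $D^{k}A(g)=A(g)^{(k)}$, and from $H_q'=qH_{q-1}$ we have $H_q^{(k)}=\frac{q!}{(q-k)!}H_{q-k}$ for $q\ge k$. Writing $A(g)=\sum_{q}|c_q|H_q$ and applying Minkowski's inequality in $L^M(\gamma)$ to the polynomial partial sums $\sum_{q\le N}|c_q|H_q$, it follows that if
\[
\sum_{q\ge k}|c_q|\,\frac{q!}{(q-k)!}\,\|H_{q-k}\|_{L^M(\gamma)}<\infty \qquad\text{for every }k=0,1,\dots,\ell,
\]
then the series $\sum_q |c_q|H_q^{(k)}$ converges absolutely in $L^M(\gamma)$ for each such $k$, so those partial sums form a Cauchy sequence for $\|\cdot\|_{\ell,M}$; since they converge to $A(g)$ in $L^2(\R,\gamma)$, their $\|\cdot\|_{\ell,M}$-limit must be $A(g)$, which therefore belongs to $\DD^{\ell,M}(\R,\gamma)$. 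Thus everything reduces to verifying the displayed condition.

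For the norms $\|H_m\|_{L^M(\gamma)}$ I would use the sharp estimate of Larsson-Cohn, a refinement of Nelson's hypercontractivity inequality: there is a constant $C_M$ such that $\|H_m\|_{L^M(\gamma)}\le C_M\,m^{-1/4}(M-1)^{m/2}\sqrt{m!}$ for all $m\ge 1$. If one uses instead the plain hypercontractivity bound $\|H_m\|_{L^M(\gamma)}\le(M-1)^{m/2}\sqrt{m!}$, the same computation gives the conclusion of the lemma with $q^{\ell/2}$ in place of $q^{\ell/2-1/4}$; it is precisely the polynomial gain $m^{-1/4}$ that produces the stated exponent.

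It then remains to do the factorial bookkeeping. For $q\ge k+1$, using $\frac{q!}{(q-k)!}=q(q-1)\cdots(q-k+1)\le q^{k}$ one gets
\[
\frac{q!}{(q-k)!}\,\sqrt{(q-k)!}=\sqrt{q!}\,\sqrt{\frac{q!}{(q-k)!}}\le q^{k/2}\sqrt{q!}\le q^{\ell/2}\sqrt{q!}\qquad (k\le\ell),
\]
while $(M-1)^{(q-k)/2}\le(M-1)^{q/2}$ since $M-1\ge 2$, and $(q-k)^{-1/4}\le 2^{1/4}q^{-1/4}$ once $q\ge 2k$. Plugging these bounds into the displayed sum bounds it by $C_{M,\ell}\sum_{q}|c_q|\,q^{\ell/2-1/4}\sqrt{q!}\,(M-1)^{q/2}$, apart from the finitely many terms with $q\le 2k$ (and the trivial term $q=0$), which are finite and can be absorbed into the constant. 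Hence condition (\ref{q3}) implies the displayed condition for every $k\le\ell$, and the proof is complete. The only step that is not routine is the appeal to the sharp $L^p$-estimate for Hermite polynomials; everything else is the triangle inequality together with elementary inequalities for factorials.
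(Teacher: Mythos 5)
Your argument is correct and follows essentially the same route as the paper's proof: triangle inequality in $L^M(\R,\gamma)$ applied to the truncated Hermite expansion, the Larsson-Cohn estimate $\|H_m\|_{L^M(\gamma)}\le C_M m^{-1/4}(M-1)^{m/2}\sqrt{m!}$, and the same factorial bookkeeping yielding the exponent $q^{\ell/2-1/4}$. Your write-up is in fact slightly more complete, since you check all derivative orders $k\le\ell$ and explicitly identify the $\|\cdot\|_{\ell,M}$-limit of the partial sums with $A(g)$, points the paper leaves implicit.
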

     
  \begin{proof}
  We have
 \[
 D^\ell A^{(N)}  (g)= \sum_{q=\ell} ^N  |c_q| q(q-1) \cdots (q-\ell+1)H_{q-\ell}.
 \]
 Applying the estimate  (see, for instance, \cite{Lar})
 \[
 \|H_q \| _{L^M(\R, \gamma)} = c(M) q^{-\frac 14} \sqrt{ q!} (M-1)^{\frac q2}  (1+ O(q^{-1})),
 \]
 we obtain 
  \begin{align*}
\|  D^\ell A^{(N)}  (g)\|_{L^M(\R,\gamma)} &\le
c(M)  \Bigg ( | c_\ell|  
  \sum_{q=\ell} ^N  |c_q| q(q-1) \cdots (q-\ell+1)   (q-\ell)^{-\frac 14} \\
  & \qquad \times  \sqrt{ (q-\ell)!} (M-1)^{\frac {q-\ell}2}  (1+ O(q^{-1})) \Bigg) \\
  &\le c(M,\ell)  \Bigg ( | c_\ell|  +
  \sum_{q=\ell} ^N  |c_q| q ^{\frac \ell 2-\frac 14}   \sqrt{  q!} (M-1)^{\frac {q-\ell}2}  (1+ O(q^{-1})) \Bigg).
 \end{align*}
  Therefore, taking into account that  $A^{(N)}(g)$ converges in $L^2(\Omega)$ to $A(g)$ as $N$ tends to infinity, we conclude that 
  $\mathbb{E}(|D^\ell A(g)|^M) < \infty$ if  (\ref{q3}) holds.
     \end{proof}

 \section{Proof of Theorem \ref{thm1}} 
 
\begin{proof}
 Consider a centered stationary Gaussian  family of random variables   $X=\{ X_n , n\ge 0\}$  with unit variance and covariance
    $\rho(k) = \EE(X_0 X_k)$ for $k \ge 0$.  We put $\rho(-k) = \rho(k)$ for $k<0$.
    Suppose that $\HH$ is a Hilbert space and $e_i \in \HH$, $i \ge 0$,  are elements such that, for each $i,j \ge 0$, we have
    $\langle e_i, e_j \rangle_{\HH}= \rho(i-j)$. In this situation, if  $\{ W(\phi): \phi \in \HH\}$ is an isonormal Gaussian process, then
      the sequence  $X=\{ X_n , n\ge 0\}$  has the same law as $\{ W(e_n), n \ge 0\}$ and we can assume, without any loss of generality, that
      $X_n = W(e_n)$. 

      Consider the sequence   $	  F_n := \frac{1}{\sqrt{n}} \sum_{j=1}^n g(X_j) $ introduced in (\ref{yn}), where $g\in L^2(\R, \gamma)$ has  Hermite rank $d\ge 2$ and let 
	$\sigma_n^2= \EE (F_n^2)$.    Under condition  (\ref{h1}), it is well known that as $n\to \infty$, $\sigma_n^2 \to \sigma^2$,  where $\sigma^2$ has been defined in (\ref{bm.sig}). 
	Set $Y_n =\frac {F_n} {\sigma_n}$.  Notice that $\sigma>0$ implies that  $\sigma_n$ is bounded below for $n$ large enough. 
	Taking into account  (\ref{g.intrep}), we have  the representation $Y_n = \delta (\frac 1{ \sigma_n}u_n)$, where 
\begin{equation} \label{un}
u_n  =   \frac{1}{\sqrt{n}} \sum_{j=1}^n g_1(X_j) e_j,
\end{equation}
and $g_1$ is the shifted function introduced in  (\ref{t1a}).
		 	 
As a consequence of Proposition \ref{prop2}, we have the estimate
\begin{align}
	d_{TV} (Y_n ,Z) & \leq  2\sqrt{ {\rm Var} ( \langle DY_n,  \frac 1{\sigma_n}u_n \rangle_{\HH})} \nonumber\\
	& \le C \sqrt{ {\rm Var} ( \langle DF_n,  u_n \rangle_{\HH})} .  \label{yn2.est}
\end{align}
Then, we can write
 \begin{align*}
    \langle DF_n,u_n \rangle_{\HH} =\frac{1}{n} \sum_{i,j=1}^n g'(X_i)g_1(X_j)\rho(i-j).
    \end{align*}
  The random variable   $g'(X_i)  g_1(X_j)$ belongs to $L^2(\Omega)$, but we do  not know its chaos expansion. For this reason, we need to use a limit argument.   We have
  \[
     \langle DF_n,u_n \rangle_{\HH} =\lim_{N\rightarrow \infty} \Phi_{n,N},
     \]
     where the convergence holds in $L^1(\Omega)$ and
     \[
     \Phi_{n,N}= \frac{1}{n}\sum_{i,j=1}^n  \sum_{q_1,q_2 =d} ^N c_{q_1}c_{q_2}q_1  H_{q_1-1}(X_i)H_{q_2-1}(X_j)\rho(i-j).
     \]
Therefore, by Fatou's lemma
      \begin{align*}
   \Var \left( \langle DF_n,u_n \rangle_{\HH} \right) &  =\E \left(\langle DF_n,u_n \rangle_{\HH}^2 \right) - (\E \left(\langle DF_n,u_n \rangle_{\HH}\right))^2 \\
   &\le  \liminf_{N\rightarrow \infty} \left( \E ( \Phi_{n,N}^2) - ( \E(\Phi_{n,N}))^2 \right) \\
   &=  \liminf_{N\rightarrow \infty} \Var  (\Phi_{n,N}).
     \end{align*}
We can write
 \begin{align} \nonumber
 \Var  (\Phi_{n,N}) = &\frac{1}{n^2} \sum_{i_1,i_2,i_3,i_4=1}^n \sum_{q_1,q_2,q_3,q_4=d}^{N}  q_1 q_3c_{q_1}c_{q_2} c_{q_3}c_{q_4}  \rho(i_1-i_2)\rho(i_3-i_4)\\   \label{eq6}
 & \times \mathrm{Cov} (H_{q_1-1}(X_{i_1})H_{q_2-1}(X_{i_2}),H_{q_3-1}(X_{i_3})H_{q_4-1}(X_{i_4})) .
\end{align}
The next step is to compute the covariance appearing in the previous formula. To do this we will write the Hermite polynomials in terms of stochastic integrals and apply Lemma \ref{lem1}. That is,
 \begin{align*}
& \mathrm{Cov} (H_{q_1-1}(X_{i_1})H_{q_2-1}(X_{i_2}),H_{q_3-1}(X_{i_3})H_{q_4-1}(X_{i_4})) \\
&=\mathrm{Cov} (I_{q_1-1}(e_{i_1} ^{\otimes(q_1-1)} )I_{q_2-1}(e_{i_2} ^{\otimes(q_2-1)} ), I_{q_3-1}(e_{i_3} ^{\otimes(q_3-1)} )I_{q_4-1}(e_{i_4} ^{\otimes(q_4-1)} )) \\
&= \E\left(I_{q_1-1}(e_{i_1} ^{\otimes(q_1-1)} )I_{q_2-1}(e_{i_2} ^{\otimes(q_2-1)} ) I_{q_3-1}(e_{i_3} ^{\otimes(q_3-1)} )I_{q_4-1}(e_{i_4} ^{\otimes(q_4-1)} ) \right) \\
& \quad - \E\left(I_{q_1-1}(e_{i_1} ^{\otimes(q_1-1)} )I_{q_2-1}(e_{i_2} ^{\otimes(q_2-1)} )  \right) 
\E \left(I_{q_3-1}(e_{i_3} ^{\otimes(q_3-1)} )I_{q_4-1}(e_{i_4} ^{\otimes(q_4-1)} ) \right)
\end{align*}
and using  Lemma \ref{lem1}, 
 \begin{align}  \nonumber
&
 \E\left(I_{q_1-1}(e_{i_1} ^{\otimes(q_1-1)} )I_{q_2-1}(e_{i_2} ^{\otimes(q_2-1)} ) I_{q_3-1}(e_{i_3} ^{\otimes(q_3-1)} )I_{q_4-1}(e_{i_4} ^{\otimes(q_4-1)} ) \right)\\  \label{eq3}
&=\sum_{\beta \in \mathcal{D}_q}  C_{q,\beta}  \prod_{1\le j< k\le 4}   \rho(i_j -i_k)^{\beta_{jk}},
\end{align}
where
\[
C_{q,\beta}  = \frac {  \prod_{j=1}^4 (q_j-1)! }{ \prod_{1\le j< k\le 4}  \beta_{jk} !}
\]
and  $ \mathcal{D}_q $ is the set of nonnegative integers $\beta_{jk}$,  satisfying
\begin{equation}
q_\ell-1= \sum_{ j \, {\rm or} \, k =\ell} \beta_{jk}, \quad {\rm for}  \quad 1\le \ell \le 4.  \label{eq4} 
\end{equation}
On the other hand,
 \begin{align}  \nonumber
 &\E\left(I_{q_1-1}(e_{i_1} ^{\otimes(q_1-1)} )I_{q_2-1}(e_{i_2} ^{\otimes(q_2-1)} )  \right)
\E \left(I_{q_3-1}(e_{i_3} ^{\otimes(q_3-1)} )I_{q_4-1}(e_{i_4} ^{\otimes(q_4-1)} ) \right) \\  \label{eq2}
&= (q_1-1)! (q_3-1)!\rho^{q_1-1} (i_1-i_2)   \rho^{q_3-1} (i_3-i_4),
 \end{align}
 if $q_1=q_2$ and $q_3=q_4$, and zero otherwise.
 Notice that  (\ref{eq2}) is precisely the term in the sum  (\ref{eq3}) with $\beta_{12} =q_1-1$, $\beta_{34}=q_3-1$ and $\beta_{13}=\beta_{14}=\beta_{23}=\beta_{24}=0$. As a consequence, we obtain
 \begin{equation}      
\mathrm{Cov} (H_{q_1-1}(X_{i_1})H_{q_2-1}(X_{i_2}),H_{q_3-1}(X_{i_3})H_{q_4-1}(X_{i_4}))     \label{eq5}
= \sum_{\beta \in \mathcal{D}'_q}  C_{q,\beta}  \prod_{1\le j< k\le 4}   \rho(i_j -i_k)^{\beta_{jk}},
\end{equation}
where   $ \mathcal{D}'_q $ is the set of elements $(\beta_1, \dots, \beta_6)$, where the $\beta_k$'s are nonnegative integers satisfying (\ref{eq4})  and
\[
\beta_{13}+\beta_{14}+\beta_{23}+\beta_{24}\ge 1.
\]
Substituting (\ref{eq5}) into (\ref{eq6}) yields
 \begin{align*} 
 \Var  (\Phi_{n,N}) = &\frac{1}{n^2} \sum_{i_1,i_2,i_3,i_4=1}^n \sum_{q_1,q_2,q_3,q_4=d}^{N}  
  \sum_{\beta \in \mathcal{D}'_q} 
 C_{q,\beta}
 q_1 q_3c_{q_1}c_{q_2} c_{q_3}c_{q_4} \\   
 & \times  \rho^{\beta_{12}+1}(i_1- i_2)  \rho^{\beta_{13}}(i_1- i_3) \rho^{\beta_{14}}(i_1- i_4)
 \rho^{\beta_{23}}(i_2- i_3) \rho^{\beta_{24}}(i_2- i_4) \rho^{\beta_{34}+1}(i_3- i_4).
\end{align*}
Replacing $\beta_{12}+1$ and $\beta_{34}+1$ by $\beta_{12}$ and $\beta_{34}$, the above equality can be rewritten as
 \begin{align*} 
 \Var  (\Phi_{n,N}) = \frac{1}{n^2} \sum_{i_1,i_2,i_3,i_4=1}^n \sum_{q_1,q_2,q_3,q_4=d}^{N}  
  \sum_{\beta \in \mathcal{E}_q} 
K_{q,\beta}
 c_{q_1}c_{q_2} c_{q_3}c_{q_4}   \prod_{1\le j< k\le 4}   \rho(i_j -i_k)^{\beta_{jk}},
\end{align*}
where
\[
K_{q,\beta}  = \frac {    q_1! (q_2-1)! q_3! (q_4-1)! }{ (\beta_{12}-1)! \beta_{13}! \beta_{14}! \beta_{23}! \beta_{24}! (\beta_{34}-1)!}
\]
and  $ \mathcal{E}_q $ is the set of  nonnegative integers $\beta_{jk}$, $ 1\le j<k \le 4$,  satisfying
$\beta_{13}+\beta_{14}+\beta_{23}+\beta_{24}\ge 1$, $\beta_{12}\ge 1$, $\beta_{34}\ge 1$ and
$$
q_\ell= \sum_{ j \, {\rm or} \, k =\ell} \beta_{jk}, \quad {\rm for}  \quad 1\le \ell \le 4.   
$$
This leads to the estimate
\[
\Var  (\Phi_{n,N}) \le \sup_\beta A_{n,\beta}   \sum_{q_1,q_2,q_3,q_4=d}^{N} 
 \sum_{\beta \in \mathcal{E}_q} 
K_{q,\beta}
 |c_{q_1}c_{q_2} c_{q_3}c_{q_4}|,
 \]
 where
 \[
 A_{n,\beta}= \frac{1}{n^2} \sum_{i_1,i_2,i_3,i_4=1}^n 
 \prod_{1\le j< k\le 4}  | \rho(i_j -i_k)|^{\beta_{jk}},
 \]
 and the supremum is taken over all sets of  nonnegative integers $\beta_{jk}$, $ 1\le j<k \le 4$,   satisfying
$\beta_{13}+\beta_{14}+\beta_{23}+\beta_{24}\ge 1$, $\beta_{12}\ge 1$, $\beta_{34}\ge 1$, $\beta_{jk}\le d$ for $  1\le j<k \le 4$ and
$$
d \le  \sum_{ j \, {\rm or} \, k =\ell} \beta_{jk}, \quad {\rm for}  \quad 1\le \ell \le 4.   
$$
To complete the proof we need  to show the following claims:
\begin{itemize}
\item[(a)]   We have
\begin{equation} \label{ecu12}
\sum_{q_1,q_2,q_3,q_4=d}^{\infty} 
 \sum_{\beta \in \mathcal{E}_q} 
K_{q,\beta}
 |c_{q_1}c_{q_2} c_{q_3}c_{q_4}| <\infty.
 \end{equation}
 \item[(b)]  If $d=2$, then  $ \sup_\beta A_{n,\beta} $ is bounded by   a constant times the right-hand side of
(\ref{ecu1}).
 \item[(c)]  If $d\ge 3$, then  $ \sup_\beta A_{n,\beta} $ is bounded by  a constant times the right-hand side of
(\ref{ecu2}).
\end{itemize}

\medskip
\noindent
{\it Proof of (\ref{ecu12}):}  The main idea here is to identify the sum in (\ref{ecu12}) as the variance of a truncated function composed with a fixed random variable $X_1$.
From our previous computations it follows that
\begin{align*}
\sum_{q_1,q_2,q_3,q_4=d}^{N} 
 \sum_{\beta \in \mathcal{E}_q} 
K_{q,\beta}
 |c_{q_1}c_{q_2} c_{q_3}c_{q_4}| &=
 \sum_{q_1,q_2,q_3,q_4=d}^{N}   q_1 q_3  |c_{q_1}c_{q_2} c_{q_3}c_{q_4}|  \\
 &\times 
\mathrm{Cov} (H_{q_1-1}(X_1)H_{q_2-1}(X_1),H_{q_3-1}(X_1)H_{q_4-1}(X_1)) \\
&= \Var  (  A(g')^{(N)}(X_1) A(g_1)^{(N)} (X_1)),
\end{align*}
where for each integer $N\ge d$, we denote by $A(g')^{(N)}$   and $A(g_1)^{(N)}$  the truncated expansions  of $A(g')$  and $A(g_1)$, respectively, introduced in (\ref{truncated}).   By Proposition  \ref{prop1},  $(A(g')^{(N)})^2$   and  $(A(g_1)^{(N)})^2$ are convergent in $L^2(\RR, \gamma)$
to $A(g')^2$ and $A(g_1)^2$, respectively. Therefore,
\[
\sum_{q_1,q_2,q_3,q_4=d}^{\infty} 
 \sum_{\beta \in \mathcal{E}_q} 
K_{q,\beta}
 |c_{q_1}c_{q_2} c_{q_3}c_{q_4}|    =  \Var  ( A(g')(X_1) A(g_1) (X_1)) <\infty.
 \]
 
 \medskip
 \noindent
 {\it Proof of (b):} We will use ideas from graph theory to show the bound in the first part of Theorem 1.
 Recall the supremum is taken over all sets of  nonnegative integers $\beta_{jk}$, $ 1\le j<k \le 4$,   satisfying
$\beta_{13}+\beta_{14}+\beta_{23}+\beta_{24}\ge 1$, $\beta_{12}\ge 1$, $\beta_{34}\ge 1$, $\beta_{jk}\le 2$ for $  1\le j<k \le 4$ and
\begin{equation} \label{m5}
2 \le  \sum_{ j \, {\rm or} \, k =\ell} \beta_{jk}, \quad {\rm for}  \quad 1\le \ell \le 4. 
\end{equation}
The exponents $\beta_{jk}$ induce an unordered simple graph on the set of vertices $V= \{ 1,2,3,4\}$ by putting an edge between $j$ and $k$  if $\beta_{jk} \not=0$. There are edges connecting the  pairs of vertices $(1,2)$ and $(3,4)$ and condition $\beta_{13}+\beta_{14}+\beta_{23}+\beta_{24}\ge 1$ means that the graph is connected. Without any loss of generality, we can assume that there is an edge between the vertices $2$ and $3$. Then, condition 
(\ref{m5}) implies that  the degree of each vertex is at least two. The worse case is when the number of edges is minimal and the corresponding  nonzero coefficients  $\beta_{jk}$ are equal to one.  So far we have edges in $(1,2)$, $(3,4)$ and $(2,3)$. There must be more edges because each vertex must have at least degree two. There are two possible cases:

\begin{itemize}
\item[(i)] $\beta_{14} =1$. In this case we have
  \[
 A_{n,\beta}\leq \frac{1}{n^2} \sum_{i_1,i_2,i_3,i_4=1}^n 
 \left| \rho(i_1- i_2)  \rho(i_2- i_3) \rho(i_3- i_4) \rho(i_1- i_4) \right|.
 \]
 After making the change of variables  $i_1=i_1$, $k_1=i_1-i_2$, $k_2=i_2-i_3$ and $k_3=i_3-i_4$ and  using the inequality (\ref{equ6a}) with $M=3$ and $v=(1,1,1)$, we obtain 
\[
  A_{n,\beta}   
 \le \frac{1}{n} \sum_{ |k_i| \le n, i=1,2,3}
 \left| \rho(k_1)  \rho(k_2) \rho(k_3) \rho(k_1+k_2+k_3) \right|  \le \frac{C}{n} \left(\sum_{|k| \leq n} |\rho(k)|^{ \frac 43}\right)^3.
\]
\item[(ii)] Suppose that we add two more edges to  the graph formed by the edges  $(1,2)$, $(2,3)$ and $(3,4)$. In  this case, we obtain
  \[
 A_{n,\beta}\leq \frac{1}{n^2} \sum_{i_1,i_2,i_3,i_4=1}^n 
 \left| \rho(i_1- i_2)  \rho(i_2- i_3) \rho(i_3- i_4) \rho(i_{\alpha_1}- i_{\beta_1})\rho(i_{\alpha_2}- i_{\beta_2}) \right|.
 \]
 Making the change of variables $i_1=i_1$, $k_1=i_1-i_2$, $k_2=i_2-i_3$ and $k_3=i_3-i_4$, we obtain
   \[
 A_{n,\beta}\leq \frac{1}{n}   \sum_{ |k_i| \le n, i=1,2,3}
 \left| \rho(k_1)  \rho(k_2) \rho(k_3) \rho( {\bf k} \cdot  {\bf v})  \rho( {\bf k} \cdot  {\bf w}) \right|,
 \]
 where ${\bf v} $ and ${\bf w}$ are two linearly independent vectors in $\mathbb{Z}^3$ and ${\bf k} =(k_1,k_2,k_3)$.
  Using (\ref{equ6c}), we obtain 
  \[
 A_{n,\beta}\leq  \frac{C}{n} \sum_{|k|\leq n} 
 \left|   \rho(k) \right|,
 \]
  which completes the proof  of (b).
 \end{itemize}
  
   \medskip
 \noindent
 {\it Proof of (c):}   This estimate can be obtained by exactly the same arguments as in the proof of Theorem  4.5 in \cite{NZ}. We omit the details.
   \end{proof} 
    

      \begin{remark}
    We can show that both bounds in (\ref{ecu1}) are not comparable. In the particular case $|\rho(k)| \sim |k|^{-\alpha}$ as $|k| \rightarrow \infty$, with $\alpha >\frac 12$, we obtain:
    $$
      d_{\rm TV}(Y_n , Z)  \le
      \left\{
         \begin{array}{ll}  
             Cn^{1-2\alpha } & {\rm if} \quad \frac 12 <\alpha < \frac 23, \\
            C n^{-\frac \alpha 2}  & {\rm if} \quad  \frac 23 \le \alpha <1, \\
            C n^{-\frac 12} (\log n)^{\frac 12} & {\rm if} \quad  \alpha = 1,\\
                      C n^{-\frac 12}  & {\rm if} \quad  \alpha > 1.
         \end{array}
      \right.
    $$
    \end{remark}

\section{Proof of Theorem \ref{thm2}}

 \begin{proof}
 As in the proof of Theorem \ref{thm1}, we can assume that     $X_n = W(e_n)$, where  $e_i \in \HH$, $i \ge 0$   are elements in a Hilbert space $\HH$ such that, for each $i,j \ge 0$, we have
    $\langle e_i, e_j \rangle_{\HH}= \rho(i-j)$ and  $W=\{W(\phi): \phi \in \HH\}$ is an isonormal Gaussian process.

      Consider the sequence   $	  F_n := \frac{1}{\sqrt{n}} \sum_{j=1}^n g(X_j) $ introduced in (\ref{yn}), where $g\in L^2(\R, \gamma)$ has  Hermite rank $d= 2$ and let 
	$\sigma_n^2= \EE (F_n^2)$.    Set $Y_n =\frac {F_n} {\sigma_n}$. 
	Taking into account  (\ref{g.intrep}), we have  the representation $Y_n = \delta ^2(\frac 1{ \sigma_n}v_n)$, where 
\begin{equation} \label{vn}
v_n  =   \frac{1}{\sqrt{n}} \sum_{j=1}^n g_2(X_j) e_j \otimes e_j.
\end{equation}
		Under condition  (\ref{h1}), it is well known that as $n\to \infty$, $\sigma_n^2 \to \sigma^2$,  where $\sigma^2$ has been defined in (\ref{bm.sig}). 	 
As a consequence of Proposition \ref{prop3}, we have the estimate
\begin{align}
d_{W}(Y_n,Z) \leq  C  \sqrt{\Var \left( \langle D^2F_n,v_n \rangle_{\HH^{\otimes 2}}\right)}+C \mathbb{E}\left(|\langle DF_n\otimes DF_n,v_n \rangle_{\HH^{\otimes 2}}|\right).
\end{align}
Therefore, we need to estimate the quantities
  $  \Var(\langle D^2F_n,v _n\rangle_{\HH^{\otimes 2}})$
  and $  \mathbb{E}\left(|\langle DF_n\otimes DF_n,v_n \rangle_{\HH^{\otimes 2}}|\right)$. 
  
  \medskip
  \noindent
  {\it (i) Estimation of  $ \Var(\langle D^2F_n,v_n \rangle_{\HH^{\otimes 2}})$}.
We will follow  similar arguments as in the proof of Theorem \ref{thm2}. First, we write
 \begin{align*}
    \langle D^2F_n,v_n \rangle_{\HH^{\otimes 2}} =\frac{1}{n} \sum_{i,j=1}^n g''(X_i)g_2(X_j)\rho^2(i-j).
    \end{align*}
  Using a limit argument, we obtain
  \[
     \langle D^2F_n,V_n \rangle_{\HH^{\otimes 2}} =\lim_{N\rightarrow \infty} \Phi_{n,N},
     \]
     where the convergence holds in $L^1(\Omega)$ and
     \[
     \Phi_{n,N}= \frac{1}{n}\sum_{i,j=1}^n  \sum_{q_1,q_2 =2} ^N c_{q_1}c_{q_2}q_1(q_1-1)  H_{q_1-2}(X_i)H_{q_2-2}(X_j)\rho^2(i-j).
     \]
Therefore, by Fatou's lemma
$$
   \Var ( \langle D^2F_n,v_n \rangle_{\HH^{\otimes 2}} ) \le  \liminf_{N\rightarrow \infty} \Var  (\Phi_{n,N}).
$$
We can write
 \begin{align} \nonumber
 \Var  (\Phi_{n,N}) = &\frac{1}{n^2} \sum_{i_1,i_2,i_3,i_4=1}^n \sum_{q_1,q_2,q_3,q_4=2}^{N}  q_1 (q_1-1) q_3(q_3-1)c_{q_1}c_{q_2} c_{q_3}c_{q_4}  \rho^2(i_1-i_2)\rho^2(i_3-i_4)\\   \label{eq6'}
 & \times \mathrm{Cov} (H_{q_1-2}(X_{i_1})H_{q_2-2}(X_{i_2}),H_{q_3-2}(X_{i_3})H_{q_4-2}(X_{i_4})) .
\end{align}
 With a very similar calculation as in the proof of Theorem \ref{thm1}, we have

 \begin{equation}      
\mathrm{Cov} (H_{q_1-1}(X_{i_1})H_{q_2-1}(X_{i_2}),H_{q_3-1}(X_{i_3})H_{q_4-1}(X_{i_4}))     \label{eq5'}
= \sum_{\beta \in \mathcal{D}'_q}  C_{q,\beta}  \prod_{1\le j< k\le 4}   \rho(i_j -i_k)^{\beta_{jk}},
\end{equation}
where   $ \mathcal{D}'_q $ is the set of  nonnegative integers $\beta_{jk}$,  $1\le j<k \le 4$,
satisfying 
\begin{equation}
q_\ell-2= \sum_{ j \, {\rm or} \, k =\ell} \beta_{jk}, \quad {\rm for}  \quad 1\le \ell \le 4  \label{eq7} 
\end{equation}  and
\[
\beta_{13}+\beta_{14}+\beta_{23}+\beta_{24}\ge 1.
\]
Substituting (\ref{eq5'}) into (\ref{eq6'}) yields
 \begin{align*} 
 \Var  (\Phi_{n,N}) = &\frac{1}{n^2} \sum_{i_1,i_2,i_3,i_4=1}^n \sum_{q_1,q_2,q_3,q_4=2}^{N}  
  \sum_{\beta \in \mathcal{D}'_q} 
 C_{q,\beta}
 q_1(q_1-1) q_3(q_3-1)c_{q_1}c_{q_2} c_{q_3}c_{q_4} \\   
 & \times  \rho^{\beta_{12}+2}(i_1- i_2)  \rho^{\beta_{13}}(i_1- i_3) \rho^{\beta_{14}}(i_1- i_4)
 \rho^{\beta_{23}}(i_2- i_3) \rho^{\beta_{24}}(i_2- i_4) \rho^{\beta_{34}+2}(i_3- i_4).
\end{align*}
Replacing $\beta_{12}+2$ and $\beta_{34}+2$ by $\beta_{12}$ and $\beta_{34}$, the above equality can be rewritten as
 \begin{align*} 
 \Var  (\Phi_{n,N}) = \frac{1}{n^2} \sum_{i_1,i_2,i_3,i_4=1}^n \sum_{q_1,q_2,q_3,q_4=2}^{N}  
  \sum_{\beta \in \mathcal{E}_q} 
K_{q,\beta}
 c_{q_1}c_{q_2} c_{q_3}c_{q_4}   \prod_{1\le j< k\le 4}   \rho(i_j -i_k)^{\beta_{jk}},
\end{align*}
where
\[
K_{q,\beta}  = \frac {   q_1!  (q_2-2)!q_3!(q_4-2)!}{ (\beta_{12}-2)! \beta_{13}! \beta_{14}! \beta_{23}! \beta_{24}! (\beta_{34}-2)!}
\]
and  $ \mathcal{E}_q $ is the set of  nonnegative integers $\beta_{jk}$, $ 1\le j<k \le 4$,  satisfying
$\beta_{13}+\beta_{14}+\beta_{23}+\beta_{24}\ge 1$, $\beta_{12}\ge 2$, $\beta_{34}\ge 2$ and
$$
q_\ell= \sum_{ j \, {\rm or} \, k =\ell} \beta_{jk}, \quad {\rm for}  \quad 1\le \ell \le 4.  
$$
We can write
\[
\Var  (\Phi_{n,N}) \le \sup_\beta A_{n,\beta}   \sum_{q_1,q_2,q_3,q_4=2}^{N} 
 \sum_{\beta \in \mathcal{E}_q} 
K_{q,\beta}
 |c_{q_1}c_{q_2} c_{q_3}c_{q_4}|,
 \]
 where
 \[
 A_{n,\beta}= \frac{1}{n^2} \sum_{i_1,i_2,i_3,i_4=1}^n 
 \prod_{1\le j< k\le 4}  | \rho(i_j -i_k)|^{\beta_{jk}},
 \]
 and the supremum is taken over all sets of  nonnegative integers $\beta_{jk}$, $ 1\le j<k \le 4$,   satisfying
$\beta_{13}+\beta_{14}+\beta_{23}+\beta_{24}\ge 1$, $\beta_{12}\ge 2$, $\beta_{34}\ge 2$, for $  1\le j<k \le 4$ and
$$
2 \le  \sum_{ j \, {\rm or} \, k =\ell} \beta_{jk}, \quad {\rm for}  \quad 1\le \ell \le 4. 
$$
 Then, in this case we have
  \[
 A_{n,\beta}\leq \frac{1}{n^2} \sum_{i_1,i_2,i_3,i_4=1}^n 
 \left| \rho(i_1- i_2)^2  \rho(i_{\alpha_1}- i_{\alpha_2})  \rho(i_3- i_4)^2 \right|
 \]
 where $\alpha_1 \in \{1,2\}$ and $\alpha_2 \in \{3,4\}$. After making the change $i_1=i_1$, $k_1=i_1-i_2$, $k_2=i_{\alpha_1}-i_{\alpha_2}$ and $k_3=i_3-i_4$, we obtain 
  \[
A_{n,\beta}\leq   \frac{1}{n} \sum_{|k_i |\le n, i=1,2,3}  
 \left| \rho(k_1)^2  \rho(k_2) \rho(k_3)^2 \right| \leq \frac{C}{n} \sum_{|k|\leq n} 
 \left|   \rho(k) \right|.
 \]
 Now, it is left to show that 
 \begin{align}\label{finite}
     \sum_{q_1,q_2,q_3,q_4=2}^{N} 
 \sum_{\beta \in \mathcal{E}_q} 
K_{q,\beta}
 |c_{q_1}c_{q_2} c_{q_3}c_{q_4}| < \infty.
 \end{align}
We have
  \begin{align*}
     & \sum_{q_1,q_2,q_3,q_4=2}^{N} 
 \sum_{\beta \in \mathcal{E}_q} 
K_{q,\beta}
 |c_{q_1}c_{q_2} c_{q_3}c_{q_4}| =\sum_{q_1,q_2,q_3,q_4=2}^{N} q_1 (q_1-1) q_3(q_3-1)|c_{q_1}c_{q_2} c_{q_3}c_{q_4} | \\ &\times \mathbb{E}\left( H_{q_1-2}(X_1)H_{q_2-2}(X_1)H_{q_3-2}(X_1)H_{q_4-2}(X_1)\right) \\  &= \mathbb{E}\left( (A(g'')^{(N)})^2 (A(g_2)^{(N)})^2\right).
 \end{align*}
 \quad
 By H\"{o}lder's inequality, we obtain
 \begin{align*}
     \sum_{q_1,q_2,q_3,q_4=2}^{N} 
 \sum_{\beta \in \mathcal{E}_q} 
K_{q,\beta}
 |c_{q_1}c_{q_2} c_{q_3}c_{q_4}| \leq \| A(g'')^{(N)}\|_{L^4(\mathbb{R},\gamma)}^{1/2}\| A(g_2)^{(N)}\|_{L^4(\mathbb{R},\gamma)}^{1/2}.
\end{align*}
From the hypothesis and the Proposition \ref{prop1}, $(A(g'')^{(N)})^2$ and $(A(g_2)^{(N)})^2$ converge to $A(g'')^2$ and $A(g_2)^2$ in $L^2(\mathbb{R},\gamma)$ respectively. Hence, (\ref{finite}) holds.

    \medskip
    
  \noindent
  {\it (ii) Estimation of $  \mathbb{E}\left(|\langle DF_n\otimes DF_n,v_n \rangle_{\HH^{\otimes 2}}|\right)$. } 
 We can write
  \[
 \langle DF_n\otimes DF_n,v_n \rangle_{\HH^{\otimes 2}} 
  = n^{-\frac 32}  \sum_{i,j,k=1}^n g'(X_i) g'(X_j) g_2(X_k)\rho(i-k) \rho(j-k).
 \]
 We have, in the $L^1(\Omega)$ sense,
  \[
     \langle DF_n,u_n \rangle_{\HH} =\lim_{N\rightarrow \infty} \Psi_{n,N},
     \]
  where
     \[
     \Psi_{n,N}=  n^{-\frac 32} \sum_{i,j,k=1}^n  \sum_{q_1,q_2,q_3 =2} ^N c_{q_1}c_{q_2} c_{q_3}q_1 q_2   H_{q_1-1}(X_i)H_{q_2-1}(X_j) H_{q_3-2}(X_k)\rho(i-k)\rho(j-k).
     \]
Therefore, by Fatou's lemma
\[
 \E\left( \langle DF\otimes DF,v \rangle_{\HH^{\otimes 2}} ^2 \right)   \le  \liminf_{N\rightarrow \infty} \E \left(  \Psi_{n,N}^2 \right).
\]
We can write
 \begin{align}  \nonumber
 \E \left(\Psi_{n,N}^2 \right) &= n^{-3} \sum_{i_1, \dots, i_6=1}^n  \sum_{q_1, \dots, q_6 =2}^N
 \left(\prod_{i=1}^6 c_{q_i}\right) q_1q_2 q_4q_5  \\ \nonumber
 & \qquad \times  \E\left( H_{q_1-1}(X_{i_1})H_{q_2-1}(X_{i_2}) H_{q_3-2}(X_{i_3}) H_{q_4-1}(X_{i_4})H_{q_5-1}(X_{i_5}) H_{q_6-2}(X_{i_6})  \right) \\
 & \qquad  \times \rho(i_1-i_3) \rho(i_2-i_3)   \rho(i_4-i_6) \rho(i_5-i_6). \label{bar2}
 \end{align}
 Using Lemma  \ref{lem1}, we obtain
 \begin{align} \nonumber
 & \E\left( H_{q_1-1}(X_{i_1})H_{q_2-1}(X_{i_2}) H_{q_3-2}(X_{i_3}) H_{q_4-1}(X_{i_4})H_{q_5-1}(X_{i_5}) H_{q_6-2}(X_{i_6})  \right) \\
 &= \sum_{\beta \in \mathcal{D}_q}  C_{q,\beta}  \prod_{ 1 \le j<k \le 6 }   \rho(i_j -i_k)^{\beta_{jk}}, \label{bar1}
\end{align}
where
\[
C_{q,\beta}  = \frac { (q_3-2)! (q_6-2)!  \prod_{j=1,2,4,5}^4 (q_j-1)! }{ \prod_{1 \le  j<k \le 6 }  \beta_{jk} !}
\]
and  $ \mathcal{D}_q $ is the set of  nonnegative integers $\beta_{jk}$, $ 1\le j<k \le 6$,  satisfying
\begin{align} \nonumber
q_\ell-1&=  \sum_{j \, {\rm or}\, k =\ell}  \beta_{jk}  , \quad {\rm for}\quad \ell=1,2,4,5\ , \\ \nonumber 
q_3-2&=  \sum_{j \, {\rm or}\, k =3}  \beta_{jk}  , \\   
q_6-2&=  \sum_{j \, {\rm or}\, k =6}  \beta_{jk}.  
\end{align}
Replacing (\ref{bar1}) into (\ref{bar2}) yields
 \begin{align*}  
 \E(\Psi_{n,N}^2) &= n^{-3} \sum_{i_1, \dots, i_6=1}^n  \sum_{q_1, \dots, q_6 =2}^N \sum_{\beta \in \mathcal{D}_q}  C_{q,\beta} 
  \left(\prod_{i=1}^6 c_{q_i}\right) q_1q_2 q_4q_5\\
 &  \qquad \times  \rho(i_1-i_3) \rho(i_2-i_3)   \rho(i_4-i_6) \rho(i_5-i_6)
  \prod_{j,k=1, j<k}^6   \rho(i_j -i_k)^{\beta_{jk}}. 
 \end{align*}
Substituting $\beta_{13}+1$,  $\beta_{23}+1$, $\beta_{46}+1$ and $\beta_{56}+1$ by
$\beta_{13}$,  $\beta_{23}$, $\beta_{46}$ and $\beta_{56}$, respectively, we can write
 \begin{align*}  
 \E(\Psi_{n,N}^2) &= n^{-3} \sum_{i_1, \dots, i_6=1}^n  \sum_{q_1, \dots, q_6 =2}^N \sum_{\beta \in \mathcal{E}_q}  K_{q,\beta} 
  \left(\prod_{i=1}^6 c_{q_i}\right) q_1q_2 q_4q_5
  \prod_{j,k=1, j<k}^6   \rho(i_j -i_k)^{\beta_{jk}},
 \end{align*}
 where
 \[
K_{q,\beta}  = \frac {   {\beta_{13} \beta_{23}\beta_{46}\beta_{56} (q_3-2)! (q_6-2)!  \prod_{j=1,2,4,5}^4 (q_j-1)! } }{ \prod_{j,k=1, j<k}^6  \beta_{jk} !}
\]
and  $ \mathcal{E}_q $ is the set of  nonnegative integers $\beta_{jk}$, $1 \le j<k \le 6$,  satisfying
\begin{equation}
q_\ell =  \sum_{j \, {\rm or}\, k =\ell}  \beta_{jk}  , \quad {\rm for}\quad \ell=1,\dots, 6.  \nonumber  
\end{equation}
Hence
$$
\E(\Psi_{n,N}^2) \le \sup_\beta A_{n,\beta}    \sum_{q_1, \dots, q_6 =2}^N \sum_{\beta \in \mathcal{E}_q}  K_{q,\beta} 
  \left(\prod_{i=1}^6 |c_{q_i}|\right) q_1q_2 q_4q_5,
$$
where
$$
A_{n,\beta}= n^{-3} \sum_{i_1, \dots, i_6=1}^n  \prod_{1\le j<k \le 6}   |\rho(i_j -i_k)|^{\beta_{jk}}
$$
and the supremum is taken over all sets of nonnegative integers  $\beta_{jk}$, $j,k=1,\dots, 6$, $j<k$,  satisfying
$\beta_{13}\ge 1$,  $\beta_{23}\ge 1$, $\beta_{46}\ge 1$,  $\beta_{56}\ge 1$ and 
\begin{equation} \label{bar4}
2 \le  \sum_{j \, {\rm or}\, k =\ell}  \beta_{jk}  , \quad {\rm for}\quad \ell=1,\dots, 6 \\ .   
\end{equation}
 As in the proof of Theorem \ref{thm1}, we can show that
 \begin{equation} \label{bar3}
  \sum_{q_1, \dots, q_6 =2}^\infty \sum_{\beta \in \mathcal{E}_q}  K_{q,\beta} 
  \left(\prod_{i=1}^6 |c_{q_i}|\right) q_1q_2 q_4q_5 <\infty.
  \end{equation}
  In fact,
  \begin{align*}
  &  \sum_{q_1, \dots, q_6 =2}^N \sum_{\beta \in \mathcal{E}_q}  K_{q,\beta} 
  \left(\prod_{i=1}^6 |c_{q_i}|\right) q_1q_2 q_4q_5 =
      \sum_{q_1, \dots, q_6 =2}^N  \left(\prod_{i=1}^6 |c_{q_i}|\right) q_1q_2 q_4q_5  \\
      &\times
      \E\left[ H_{q_1-1}(X_1)H_{q_2-1}(X_1) H_{q_3-2}(X_1) H_{q_4-1}(X_1)H_{q_5-1}(X_1) H_{q_6-2}(X_1) \right] \\
      &= \E [ (A(g')^{(N)})^4(X_1) (A(g_2)^{(N)})^2(X_1)],
      \end{align*}  
      where, as before, $A(g')^{(N)}$ and  $A(g_2)^{(N)}$ are the truncated expansions of $A(g')$ and $A(g_2)$, respectively.  By H\"older's inequality, we can write
     $$
     \sum_{q_1, \dots, q_6 =2}^N \sum_{\beta \in \mathcal{E}_q}  K_{q,\beta} 
  \left(\prod_{i=1}^6 |c_{q_i}|\right) q_1q_2 q_4q_5
  \le   \| A(g')^{(N)}\|_{L^6(\R,\gamma)}^{\frac 23}\| A(g_2)^{(N)}\|_{L^6(\R,\gamma)}^{\frac 13}.
  $$
          From our hypothesis and in view of Proposition  \ref{prop1}, $(A(g')^{(N)})^3$ and $ (A(g_2)^{(N)})^3$ converge in
      $L^2(\R,\gamma) $ to $A(g')$ and $A(g_2)$,  respectively.  Thus,  (\ref{bar3}) holds true.
 
 To complete the proof, it remains to show that,  
 $$
\sup_{\beta} A_{n,\beta} \le  C n^{-1} \left(\sum_{|k| \leq n} |\rho(k)| ^{\frac 32}\right)^4.
 $$
As in the proof of Theorem \ref{thm1}, in order to show this estimate we will make use of some ideas from graph theory. The exponents $\beta_{jk}$ induce an unordered simple graph on the set of vertices  $V=\{ 1,2,3,4,5,6\}$ by putting an edge between $j$ and $k$ whenever $\beta_{jk }\not=0$. Because  $\beta_{13}\ge 1$,  $\beta_{23}\ge 1$, $\beta_{46}\ge 1$ and  $\beta_{56}\ge 1$,  there  are edges connecting the pairs of  vertices $(1,3)$, $(2,3)$, $(4,6)$ and $(5,6)$. Condition (\ref{bar4}) means that the degree of each vertex is at least $2$.
   Then we consider two cases, depending whether graph is connected or not.
   
   \medskip
   \noindent
   {\it Case 1:} Suppose that  the graph is not connected. This implies that $\beta_{12} \ge 1$, $\beta_{45} \ge 1$ and there is no edge between the sets
   $V_1=\{ 1,2,3\}$ and $V_2=\{ 4,5,6\}$. The worse case is when $\beta_{12} = \beta_{13} =\beta_{23} =\beta_{45} = \beta_{46} =\beta_{56} =1$ and all the other exponents are zero. In this case we have the estimate
   \[
   A_{n,\beta} \le  n^{-1}  \left(    \sum_{ |k_1|, |k_2| \le n}
   | \rho(k_1) \rho(k_2) \rho(k_1-k_2)| \right)^2.
   \]
   Using (\ref{equ6a}), we obtain
    \[
   A_{n,\beta} \le  C n^{-1} \left(\sum_{|k| \leq n} |\rho(k)| ^{\frac 32}\right)^4.
   \]
 
    \medskip
   \noindent
   {\it Case 2:}  Suppose that  the graph is  connected. This means that there is an edge   connecting the sets $V_1$ and $V_2$. Suppose that
   $\beta_{\alpha_0\delta_0} \ge 1$, where $\alpha_0 \in \{1,2,3\}$ and $\delta_0\in \{4,5,6\}$.   We have then $5$ nonzero coefficients $\beta$:   $ \beta_{13}$, $\beta_{23}$, $\beta_{46} $, $\beta_{56}$ and $\beta_{\alpha_0\delta_0}$.
   Because all the edges have at least degree $2$, there must be  at least two more nonzero coefficients $\beta$. Let us denote them by
   $\beta_{\alpha_1\delta_1}$ and    $\beta_{\alpha_2\delta_2}$.
   
    Then, the worse case will be when $ \beta_{13} =\beta_{23} = \beta_{46} =\beta_{56} =\beta_{\alpha_0\delta_0}=\beta_{\alpha_1\delta_1}=\beta_{\alpha_2\delta_2}=1$ and all the other coefficients are zero. Consider the change of variables $i_1-i_3=k_1$, $i_2-i_3 =k_2$, $i_4-i_6=k_3$, $i_5-i_6= k_4$, $i_{\alpha_0}-i_{\delta_0} =k_5$. Then, 
    $i_{\alpha_1} - i_{\delta_1} = {\bf k} \cdot {\bf v}$ and
       $i_{\alpha_2} - i_{\delta_2} = {\bf k} \cdot {\bf w}$, 
    where ${\bf k} = (k_1,\dots, k_5)$ and ${\bf v}$, ${\bf w}$ are  $5$-dimensional linearly independent vectors whose components are  $0$, $1$ or $-1$. 
     Then, we can write, using (\ref{equ6c}) and H\"older's inequality,
        \begin{align*}
A_{n,\beta}  &\le  n^{-2}      \sum_{ |k_i| \le n, 2\le i\le 5}  
   \prod_{i=2}^5  | \rho(k_i) |  |\rho( {\bf k} \cdot {\bf v}) \rho( {\bf k} \cdot {\bf w})|  \le  Cn^{-2}   \left( \sum_{|k| \leq n} |\rho(k)| \right)^{3}   \\
  &\le   Cn^{-1}  \left(\sum_{|k| \leq n} |\rho(k)| ^{ \frac 32}\right)^4.
   \end{align*}
 \end{proof}
   
  \begin{remark}
    In the case $g(x)= x^2-1$, the  term $ \Var(\langle D^2F_n,v_n \rangle_{\HH^{\otimes 2}})$ is zero because  $ \langle D^2F_n,v_n \rangle_{\HH^{\otimes 2}}$ is deterministic,  and for the second term we get the estimate (\ref{optimal}).
  \end{remark}
 
    \begin{remark}
    We can show that both bounds in (\ref{equa1}) are not comparable. In the particular case $|\rho(k)| \sim |k|^{-\alpha}$ as $|k| \rightarrow \infty$, with $\alpha >\frac 12$, we obtain:
    $$
      d_{\rm W}(Y_n , Z)  \le
      \left\{
         \begin{array}{ll} 
             Cn^{\frac 32 -3\alpha} & {\rm if} \quad \frac 12 <\alpha \le \frac 35, \\
            C n^{-\frac \alpha 2}  & {\rm if} \quad \frac 35 <\alpha \le 1, \\
            C n^{-\frac 12} (\log n)^{\frac 12} & {\rm if} \quad  \alpha = 1,\\
                      C n^{-\frac 12}  & {\rm if} \quad  \alpha > 1.
         \end{array}
      \right.
    $$
    \end{remark}

 \section{Proof of Theorem \ref{thm1a}} 
 
\begin{proof} With the notation used in the proof of Theorem \ref{thm1} and using Proposition \ref{prop2a}, we can write
\begin{align}
	d_{TV} (Y_n ,Z) & \leq  (8+\sqrt{32\pi}) {\rm Var} (\langle D Y_n,  u_n/\sigma_n  \rangle_{\HH}) +\sqrt{2\pi}|\mathbb{E}(Y_n^3)|+ \sqrt{32\pi}\mathbb{E}(|D^2_{u_n/\sigma_n}Y_n|^2)\\
	& \qquad + 4\pi \mathbb{E}(|D^3_{u_n/\sigma_n}Y_n|) \nonumber \\
	&\leq C\left( {\rm Var} (\langle D F_n,  u_n  \rangle_{\HH}+|\mathbb{E}(F_n^3)|+  \mathbb{E}(|D^2_{u_n}F_n|^2)+  \sqrt{ \mathbb{E}(|D^3_{u_n}F_n|^2) } \right).\nonumber
\end{align}

Now, we want to estimate each of these terms separately.

\medskip
\noindent
{\it Step 1.}  \quad From Theorem \ref{thm1} we know that
\begin{equation}
 {\rm Var} (\langle D F_n,  u_n  \rangle_{\HH}  \leq  C n^{-1} \sum_{|k| \leq n} |\rho(k)|  + C n^{-1} \left(\sum_{|k| \leq n} |\rho(k)|^{\frac{4}{3}}\right)^ 3.
\end{equation}

\medskip\noindent
{\it Step 2. } \quad We claim that
\begin{equation}  \label{a1}
|\mathbb{E}(F_n^3)| \le  \frac{C}{\sqrt{n}}  \left(\sum_{|k| \leq n} |\rho(k)|^{ \frac 32}\right)^2.
\end{equation}
We can write
 \begin{align*}
    F_n^3=\frac{1}{n^{3/2}} \sum_{i,j,k=1}^n g(X_i)g(X_j)g(X_k).
    \end{align*}
    Truncating the Wiener chaos expansion of  the random variables  $g(X_i)$, as in the proof of  Theorem \ref{thm1}, we obtain
     \[
     F_n^3 =\lim_{N\rightarrow \infty} \Psi^3_{n,N} : =\lim_{N\rightarrow \infty}
 \frac{1}{\sqrt{n}}\sum_{i=1}^n  \sum_{q=2} ^N c_{q}  H_{q}(X_i),
     \] 
where the convergence holds in $L^2(\Omega)$ due to Proposition \ref{prop1} because    $g \in L^6(\R, \gamma)$.     
Therefore, 
      \begin{align*}
   \mathbb{E} ( F_n^3 ) 
   =  \lim_{N\rightarrow \infty} \mathbb{E}  (\Psi_{n,N}^3).
     \end{align*}
We can write
 \begin{align} \nonumber
 \mathbb{E} (\Psi^3_{n,N}) = &\frac{1}{n^{3/2}} \sum_{i_1,i_2,i_3=1}^n \sum_{q_1,q_2,q_3=2}^{N}  c_{q_1}c_{q_2} c_{q_3} \mathbb{E} (H_{q_1}(X_{i_1})H_{q_2}(X_{i_2})H_{q_3}(X_{i_3})) \\
 &= \frac{1}{n^{3/2}} \sum_{i_1,i_2,i_3=1}^n \sum_{q_1,q_2,q_3=2}^{N}  c_{q_1}c_{q_2} c_{q_3} \mathbb{E} \left( I_{q_1}(e_{i_1} ^{\otimes q_1} )I_{q_2}(e_{i_2} ^{\otimes q_2} ) I_{q_3}(e_{i_3} ^{\otimes q_3}) \right).
\end{align}
Using  Lemma \ref{lem1}, we obtain
 \begin{equation}
 \E\left(I_{q_1}(e_{i_1} ^{\otimes q_1} )I_{q_2}(e_{i_2} ^{\otimes q_2} ) I_{q_3}(e_{i_3} ^{\otimes q_3} )  \right)  \label{eq9}
=\sum_{\beta \in \mathcal{D}_q}  C_{q,\beta}  \prod_{1\le j< k\le 3}   \rho(i_j -i_k)^{\beta_{jk}},
\end{equation}
where
\[
C_{q,\beta}  = \frac {  \prod_{j=1}^3 q_j! }{ \prod_{1\le j< k\le 3}  \beta_{jk} !} 
\]
and  $ \mathcal{D}_q $ is the set of nonnegative integers $\beta_{jk}$,  $ 1\le j < k \le 3$,  satisfying
\begin{equation}
q_\ell= \sum_{ j \, {\rm or} \, k =\ell} \beta_{jk}, \quad {\rm for}  \quad 1\le \ell \le 3.   
\end{equation}
Then,
\[
|\mathbb{E}  (\Psi^3_{n,N})| \le \sup_\beta A_{n,\beta}   \sum_{q_1,q_2,q_3=2}^{N} 
 \sum_{\beta \in \mathcal{E}_q} 
C_{q,\beta}
 |c_{q_1}c_{q_2} c_{q_3}|,
 \]
 where
 \[
 A_{n,\beta}= \frac{1}{n^{3/2}} \sum_{i_1,i_2,i_3=1}^n 
 \prod_{1\le j< k\le 3}  | \rho(i_j -i_k)|^{\beta_{jk}},
 \]
 and the supremum is taken over all sets of  nonnegative integers $\beta_{jk}$, $ 1\le j<k \le 3$,   satisfying $\beta_{jk}\le 2$ for $  1\le j<k \le 3$ and
$$
2 \le  \sum_{ j \, {\rm or} \, k =\ell} \beta_{jk}, \quad {\rm for}  \quad 1\le \ell \le 3.
$$
It is easy to see that to satisfy the above conditions, $\beta_{jk} \geq 1$ for all $1\leq j < k\leq 3$. Hence, we have 
\[ 
 A_{n,\beta}\leq \frac{1}{n^{3/2}} \sum_{i_1,i_2,i_3=1}^n   | \rho(i_1 -i_2)\rho(i_1 -i_3)\rho(i_2 -i_3)| .\]
 After making the change  of variables $i_1=i_1$, $k_1=i_1-i_2$, $k_2=i_1-i_3$ and  using  the inequality (\ref{equ6a}) with $M=2$ and $v=(-1,1)$, we obtain 
\[ 
 A_{n,\beta}\leq \frac{1}{n^{1/2}} \sum_{|k_1|, |k_2|\leq n}  | \rho(k_1)\rho(k_2)\rho(k_2 -k_1)| \leq \frac{C}{\sqrt{n}}  \left(\sum_{|k| \leq n} |\rho(k)|^{ \frac 32}\right)^2.
\]
To complete the proof  of (\ref{a1}), we need  to show that:
\[
\sum_{q_1,q_2,q_3=2}^{\infty} 
 \sum_{\beta \in \mathcal{D}_q} 
C_{q,\beta}
 |c_{q_1}c_{q_2} c_{q_3}| <\infty.\]
In fact,
\[ 
\lim_{N\rightarrow \infty} \sum_{q_1,q_2,q_3=2}^{N} 
 \sum_{\beta \in \mathcal{D}_q} 
C_{q,\beta}
 |c_{q_1}c_{q_2} c_{q_3}| = \lim_{N\rightarrow \infty}   \mathbb{E}\left(A(g)^{N})^3\right)=
  \mathbb{E}\left((A(g))^3\right)
<\infty,
\]
taking into account Proposition   \ref{prop1} and the fact that  $A(g) \in L^6(\mathbb{R},\gamma)$.

\medskip \noindent
{\it Step 3.}  \quad We proceed now with the  estimation of $\mathbb{E}(|D_{u_n}^2F_n|^2)$. We can write
\[
    D_{u_n}F_n =\langle DF_n,u_n \rangle_{\HH} = \frac{1}{n}\sum_{i,j=1}^n g'(X_i)g_1(X_j)\rho(i-j)
    \]
    and
\[
    D(\langle DF_n,u_n \rangle_{\HH})  = \frac{1}{n} \sum_{i,j=1}^n (g''(X_i)g_1(X_j)e_i+g'(X_i)g_1'(X_j)e_j)\rho(i-j).
    \]
    Therefore,
    \begin{align}  \notag
     D_{u_n}^2F_n& =  \langle u_n,D( \langle DF_n,u_n \rangle_{\HH})\rangle_{\HH} 
     \\ & = \frac{1}{n^{3/2}} \sum_{i,j,k=1}^n (g''(X_i)g_1(X_j)g_1(X_k)\rho(i-k)+g'(X_i)g_1'(X_j)g_1(X_k)\rho(j-k))\rho(i-j).  \label{a2}
     \end{align}
    Because  the random variables  $g''(X_i)$, $g_1(X_j)$,   $g_1(X_k)$, $g'(X_i)$ and $g_1'(X_j)$ appearing in the above expression belong to  $L^2(\Omega)$, their truncated Wiener chaos expansions convergence in $L^2(\Omega)$, and, as a consequence,
     $D^2_{u_n}F_n =\displaystyle\lim_{N \to \infty} \Phi_{n,N} $ in probability, where
    \begin{align*}
    \Phi_{n,N} & =\frac{1}{n^{3/2}} \sum_{i_1,i_2,i_3=1}^n \sum_{q_1,q_2,q_3=2}^N c_{q_1}c_{q_2}c_{q_3}q_1(q_1-1)H_{q_1-2}(X_{i_1})H_{q_2-1}(X_{i_2}) H_{q_3-1}(X_{i_3}) \\ 
    &  \qquad \times \rho(i_1-i_2)\rho(i_1-i_3) \\ 
    & \qquad  +
     c_{q_1}c_{q_2}c_{q_3}q_1(q_2-1)H_{q_1-1}(X_{i_1})H_{q_2-2}(X_{i_2}) H_{q_3-1}(X_{i_3}) \rho(i_1-i_2)\rho(i_2-i_3).
\end{align*}
Making the change of variables $(q_1,q_2) \to (q_2,q_1)$ and $(i_1,i_2)\to(i_2,i_1)$ in the second sum allows us to put the two terms together, and we obtain
   \begin{align*}
    \Phi_{n,N} & =\frac{1}{n^{3/2}} \sum_{i_1,i_2,i_3=1}^n \sum_{q_1,q_2,q_3=2}^N c_{q_1}c_{q_2}c_{q_3}(q_1+q_2)(q_1-1)H_{q_1-2}(X_{i_1})H_{q_2-1}(X_{i_2}) H_{q_3-1}(X_{i_3}) \\ 
    &  \qquad \times \rho(i_1-i_2)\rho(i_1-i_3).
\end{align*}
 Therefore, by Fatou's lemma, 
 \begin{align*}
 \mathbb{E}\left(| D_{u_n}^2F_n|^2\right) \leq \liminf_{N \to \infty} \mathbb{E}\left(|\Phi_{n,N}^2|\right).   
\end{align*}
Then,
\begin{align*}
   | \Phi_{n,N}|^2 & = \frac{1}{n^{3}} \sum_{i_1,\dots,i_6=1}^n \sum_{q_1,\dots,q_6=2}^{N} C_q H_{q_1-2}(X_{i_1})H_{q_2-1}(X_{i_2}) H_{q_3-1}(X_{i_3})  \\ & \times H_{q_4-2}(X_{i_4}) H_{q_5-1}(X_{i_5})H_{q_6-1}(X_{i_6})\rho(i_1-i_2)\rho(i_1-i_3)\rho(i_4-i_5)\rho(i_4-i_6),
\end{align*}
where 
\[
C_q =c_{q_1} c_{q_2}c_{q_3}c_{q_4}c_{q_5}c_{q_6} (q_1+q_2)(q_1-1) (q_4+q_5)(q_4-1).
\]
Using the product formula for multiple integrals (see Lemma \ref{lem1}), we get 
\begin{align*}
\mathbb{E}\left(| \Phi_{n,N}|^2\right) & = \frac{1}{n^{3}} \sum_{i_1, \dots, i_6=1}^n \sum_{q_1,\dots,q_6=2}^{N}    \sum_{\beta \in \mathcal{D}_q} K_{q,\beta}\left(\prod_{1 \leq k< l \leq 6} \rho(i_k-i_l)^{\beta_{kl}}\right)\\
& \qquad \times   \rho(i_1-i_2)\rho(i_1-i_3)\rho(i_4-i_5)\rho(i_4-i_6),
\end{align*}
where 
\[
K_{q,\beta}=  \frac{(q_1 + q_2) (q_4+q_5) \prod_{j=1}^6c_{q_j} (q_j-1)! }{\prod_{1\leq k <l\leq 6}  \beta_{kl}!}
\]
and 
\[
    \mathcal{D}_q=\{(\beta_{kl})_{1\leq k <l \leq 6}: \sum_{k \textit{ or } l =j} \beta_{kl}= q_j-1 \text{ for } j= 2,3,5,6 \text{ and } \sum_{k \textit{ or } l =j} \beta_{kl}= q_j-2 \text{ for } j= 1,4\}.
 \]
Replacing $\beta_{jk}+1$ by  $\beta_{jk}$  for $(j,k) \in \{(1,2), (1,3), (4,5), (4,6)\}$, yields
\[
\mathbb{E}\left(| \psi_{n,N}|^2\right)  = \frac{1}{n^{3}} \sum_{i_1, \dots, i_6=1}^n \sum_{q_1, \dots, q_6=2}^{N}    \sum_{\beta \in \mathcal{C}_q } L_{q,\beta} \left(\prod_{1 \leq k< l \leq 6} \rho(i_k-i_l)^{\beta_{kl}}\right) ,
\]
where 
\[
L_{q,\beta}= \frac{(q_1+q_2)(q_4 +q_5)\prod_{i=1}^6c_{q_i}(q_i-1)!}{(\beta_{12}+1)!(\beta_{13}+1)!\beta_{14}!\beta_{15}!\beta_{16}!\beta_{23}!\beta_{24}!\beta_{25}!\beta_{26}!\beta_{34}!\beta_{35}!\beta_{36}!(\beta_{45}+1)!(\beta_{46}+1)!\beta_{56}!}
\]
and 
\[
    \mathcal{C}_q=\{(\beta_{kl})_{1\leq k <l \leq 6}: \sum_{k \textit{ or } l =j} \beta_{kl}= q_j \text{ for } j= 1,\dots,6 \text{ and }  \beta_{12}, \, \beta_{13}, \, \beta_{45}. \, \beta_{46} \geq 1\}.  
    \]
Then, we can write
\[
\mathbb{E}  \left(|\psi_{n,N})|^2\right) \le   \sup_{\beta\in \mathcal{C}_q}  A_{n,\beta}   \sum_{q_1,\dots,q_6=2}^{N}
 \sum_{\beta \in \mathcal{C}_q} 
|L_{q,\beta}|,
 \]
where 
\[
A_{n,\beta}  = \frac{1}{n^3} \sum_{i_1,i_2,i_3,i_4=1}^n \prod_{1 \leq j < k \leq 6}|\rho(i_i-i_k)|^{\beta_{jk}}
\]
and the supremum is taken over all sets of  nonnegative integers $\beta_{jk}$, $ 1\le j<k \le 6$,   satisfying
$ \beta_{12}, \, \beta_{13}, \, \beta_{45}, \, \beta_{46} \geq 1$,
 $\beta_{jk}\le 2$ for $  1\le j<k \le 6$ and
$$
2 \le  \sum_{ j \, {\rm or} \, k =\ell} \beta_{jk}, \quad {\rm for}  \quad 1\le \ell \le 6.
$$
Then, the estimation follows as in the proof of the last part of Theorem  \ref{thm2}.

Now, we need to show that 
\begin{align} \label{finite1} \sum_{q_1,\dots, q_6=2}^{\infty}  
 \sum_{\beta \in \mathcal{C}_q} 
|L_{q,\beta}| < \infty.
\end{align}
In fact,
\begin{align*}
\sum_{q_1, \dots,q_6=2}^{N}
 \sum_{\beta \in \mathcal{C}_q} 
|L_{q,\beta}|  =\sum_{q_1, \dots,q_6=2}^{N} \left(\prod_{i=1}^6 |c_{q_i}| \right) (q_1+q_2)(q_1-1)(q_3+q_4)(q_4-1) \\ \times \mathbb{E}\left(H_{q_1-2}(X_1)H_{q_2-1}(X_1)H_{q_3-1}(X_1)H_{q_4-2}(X_1)H_{q_5-1}(X_1)H_{q_6-1}(X_1)\right) \\
 = \mathbb{E}\left(A(g'')^{(N)})^2(A(g_1)^{(N)})^4\right) \leq \|A(g'')^{(N)}\|_{L^6(\mathbb{R},\gamma)}^{\frac{1}{3}}\|A(g_1)^{(N)}\|_{L^6(\mathbb{R},\gamma)}^{\frac{2}{3}}.
\end{align*}
Since $A(g) \in \mathbb{D}^{3,6}$, $(A(g'')^{(N)})^3$ and $(A(g_1)^{(N)})^3$ converge to $A(g'')$ and $A(g_1)$, respectively, in $L^2(\mathbb{R},\gamma)$ by (\ref{prop1}). Then, (\ref{finite1}) is true.

\medskip
\noindent
{\it Step 4.} \quad We proceed to the estimation of $\sqrt{ \mathbb{E}(|D_{u_n}^3F_n|^2) }$.
Taking the derivative in (\ref{a2}), yields
\begin{align*}
   D( D_{u_n}^2F_n) & = \frac{1}{n^{3/2}} \sum_{i,j,k=1}^n g'''(X_{i})g_1(X_{j})g_1(X_{k})\rho(i-j)\rho(i-k)e_i \\ & +g''(X_i)g_1'(X_j)g_1(X_k)\rho(i-j)\rho(i-k)e_j   +   g''(X_i)g_1(X_j)g_1'(X_k)\rho(i-j)\rho(i-k)e_k  \\ & +g''(X_i)g_1'(X_j)g_1(X_k)\rho(i-j)\rho(j-k)e_i   +g'(X_i)g_1''(X_j)g_1(X_k)\rho(i-j)\rho(j-k)e_j \\ & +g'(X_i)g_1'(X_j)g_1'(X_k)\rho(i-j)\rho(j-k)e_k.
\end{align*}
 This implies
\begin{align*}
  \langle u_n, D( D_{u_n}^2F_n \rangle_{\HH} &= \frac{1}{n^{2}} \sum_{i_1,i_2,i_3,i_4=1}^n  g'''(X_{i_1})g_1(X_{i_2})g_1(X_{i_3})g_1(X_{i_4})\rho(i_1-i_2)\rho(i_1-i_3)\rho(i_1-i_4) \\
  & \quad  +g''(X_{i_1})g_1'(X_{i_2})g_1(X_{i_3})g_1(X_{i_4})\rho(i_1-i_2)\rho(i_1-i_3)\rho(i_2-i_4) \\ 
  & \quad   +  
  g''(X_{i_1})g_1(X_{i_2})g_1'(X_{i_3})g_1(X_{i_4})\rho(i_1-i_2)\rho(i_1-i_3)\rho(i_3-i_4)  \\ 
  &  \quad  +g''(X_{i_1})g_1'(X_{i_2})g_1(X_{i_3})g_1(X_{i_4})\rho(i_1-i_2)\rho(i_2-i_3)\rho(i_1-i_4)   \\ 
  & \quad  +g'(X_{i_1})g_1''(X_{i_2})g_1(X_{i_3})g_1(X_{i_4})\rho(i_1-i_2)\rho(i_2-i_3)\rho(i_2-i_4) \\ 
  & \quad +g'(X_{i_1})g_1'(X_{i_2})g_1'(X_{i_3})g_1(X_{i_4})\rho(i_1-i_2)\rho(i_2-i_3)\rho(i_3-i_4).
\end{align*}
Notice that the second, third and fourth terms are identical. This allows us to write
\begin{align*}
  D^3_{u_n}F_n  &= \frac{1}{n^{2}} \sum_{i_1,i_2,i_3,i_4=1}^n  g'''(X_{i_1})g_1(X_{i_2})g_1(X_{i_3})g_1(X_{i_4})\rho(i_1-i_2)\rho(i_1-i_3)\rho(i_1-i_4) \\
  & \quad  +3g''(X_{i_1})g_1'(X_{i_2})g_1(X_{i_3})g_1(X_{i_4})\rho(i_1-i_2)\rho(i_1-i_3)\rho(i_2-i_4) \\ 
  & \quad  +g'(X_{i_1})g_1''(X_{i_2})g_1(X_{i_3})g_1(X_{i_4})\rho(i_1-i_2)\rho(i_2-i_3)\rho(i_2-i_4) \\ 
  & \quad +g'(X_{i_1})g_1'(X_{i_2})g_1'(X_{i_3})g_1(X_{i_4})\rho(i_1-i_2)\rho(i_2-i_3)\rho(i_3-i_4).
\end{align*}
Then, we have
  \[
      D^3_{u_n}F_n =\lim_{N\rightarrow \infty} \Phi_{n,N},
     \]
     where the convergence holds in probability and
     \begin{align*}
    \Phi_{n,N} & = \frac{1}{n^{2}}   \sum_{i_1,i_2,i_3,i_4=1}^n \sum_{q_1,q_2,q_3,q_4=2}^{N} C_q^{(1)}  H_{q_1-3}(X_{i_1})H_{q_2-1}(X_{i_2})
    H_{q_3-1}(X_{i_3}) H_{q_4-1}   (X_{i_4}) \\ &  \times     \rho(i_1-i_2)\rho(i_1-i_3)\rho(i_1-i_4)  
    \\ + & C_q^{(2)} H_{q_1-2}(X_{i_1})H_{q_2-2}(X_{i_2})    H_{q_3-1}(X_{i_3}) H_{q_4-1}(X_{i_4})  \rho(i_1-i_2)\rho(i_1-i_3)\rho(i_2-i_4)
    \\ + & C_q^{(3)} H_{q_1-1}(X_{i_1})H_{q_2-3}(X_{i_2})    H_{q_3-1}(X_{i_3}) H_{q_4-1}(X_{i_4})  \rho(i_1-i_2)\rho(i_2-i_3)\rho(i_2-i_4)
    \\ + & C_q^{(4)} H_{q_1-1}(X_{i_1})H_{q_2-2}(X_{i_2})   H_{q_3-2}(X_{i_3}) H_{q_4-1}(X_{i_4})  \rho(i_1-i_2)\rho(i_2-i_3)\rho(i_1-i_4)
\end{align*}
with
\begin{align*}
C_q^{(1)} &=c_{q_1}c_{q_2}c_{q_3}c_{q_4} q_1(q_1-1)(q_1-2) , \\
C_q^{(2)}&=3c_{q_1} c_{q_2} c_{q_3}c_{q_4}  q_1(q_1-1) (q_2-1) , \\
C_q^{(3)}&=c_{q_1}c_{q_2} c_{q_3}c_{q_4} q_1(q_2-1)(q_2-2) ,\\
C_q^{(4)}&=c_{q_1}c_{q_2} c_{q_3} c_{q_4}q_1(q_2-1) (q_3-1) .
\end{align*}
We can combine the first and third terms with the change of variables $(q_1,q_2) \to (q_2,q_1)$ and $(i_1,i_2)\to(i_2,i_1)$. In this way we obtain
 \begin{align*}
    \Phi_{n,N} & = \frac{1}{n^{2}}   \sum_{i_1,i_2,i_3,i_4=1}^n \sum_{q_1,q_2,q_3,q_4=2}^{N} \widetilde{C}_q^{(1)}  H_{q_1-3}(X_{i_1})H_{q_2-1}(X_{i_2}) 
    H_{q_3-1}(X_{i_3}) H_{q_4-1}(X_{i_4}) \\  &  \quad  \times      \rho(i_1-i_2)\rho(i_1-i_3)\rho(i_1-i_4)  
    \\   & \quad + \widetilde{C}_q^{(2)} H_{q_1-2}(X_{i_1})H_{q_2-2}(X_{i_2})    H_{q_3-1}(X_{i_3}) H_{q_4-1}(X_{i_4})  \rho(i_1-i_2)\rho(i_1-i_3)\rho(i_2-i_4)
     \\  &  \quad  +\widetilde{C}_q^{(3)} H_{q_1-1}(X_{i_1})H_{q_2-2}(X_{i_2})   H_{q_3-2}(X_{i_3}) H_{q_4-1}(X_{i_4})  \rho(i_1-i_2)\rho(i_2-i_3)\rho(i_1-i_4)\\
    & =: \Phi^{(1)}_{n,N} +\Phi^{(2)}_{n.N}+\Phi^{(3)}_{n.N} 
\end{align*}
with
\begin{align*}
\widetilde{C}_q^{(1)} &=c_{q_1}c_{q_2}c_{q_3}c_{q_4} (q_1+q_2) (q_1-1)(q_1-2) , \\
\widetilde{C}_q^{(2)}&=c_{q_1} c_{q_2} c_{q_3}c_{q_4}  3q_1(q_1-1)  (q_2-1), \\
\widetilde{C}_q^{(3)}&=c_{q_1} c_{q_2} c_{q_3}c_{q_4}  q_1(q_2-1) (q_3-1).
\end{align*}
Then,  by Fatou's lemma, 
\begin{align*}
    \mathbb{E}\left( |D^3_{u_n}F_n| ^2\right) \leq \liminf_{N \to \infty } \mathbb{E}\left( |\Phi_{n, N}|^2\right).
\end{align*}

We are going to treat each term $\Phi_{n,N}^{(i)}$, $i=1,2,3$, separately.

\medskip
\noindent
{\it Case $i=1$}. Let us first estimate   $\mathbb{E}\left( |\Phi^{(1)}_{n, N}|^2\right)$.
We have
\begin{align*}
\E\left( (\Phi^{(1)} _{n,N})^2  \right) &= \frac 1{n^4} \sum_{i_1, \dots, i_8=1}^n \sum_{q_1, \dots, q_8 =2}^N
M^{(1)}_q \E\left(  H_{q_1-3}(X_{i_1})H_{q_2-1}(X_{i_2}) 
    H_{q_3-1}(X_{i_3}) H_{q_4-1}(X_{i_4}) \right. \\
    & \left. \quad\times  H_{q_5-3}(X_{i_5})H_{q_6-1}(X_{i_6}) 
    H_{q_7-1}(X_{i_7}) H_{q_8-1}(X_{i_8}) \right) \\
    & \quad  \times     \rho(i_1-i_2)\rho(i_1-i_3)\rho(i_1-i_4)   \rho(i_5-i_6)\rho(i_5-i_7)\rho(i_5-i_8),
\end{align*}
where
\[
M^{(1)}_q=  \left( \prod_{j=1}^8 c_{q_j}\right)  (q_1+q_2) (q_1-1)(q_1-2)(q_5+q_6) (q_5-1)(q_5-2).
\]
This yields
\begin{align*}
\E \left(\Phi^{(1)} _{n,N})^2  \right) & \leq \frac{1}{n^{4}} \sum_{i_1, \dots ,i_8=1}^n \sum_{q_1, \dots q_8=2}^{N}     \sum_{\beta \in \mathcal{D}_q^{(1)}} K_{q,\beta}^{(1)} \left(\prod_{1 \leq k< l \leq 8} |\rho(i_k-i_l)|^{\beta_{kl}}\right)  \\
&\quad \times  |\rho(i_1-i_2)\rho(i_1-i_3)\rho(i_1-i_4)\rho(i_5-i_6)\rho(i_5-i_7)\rho(i_5-i_8) |,
\end{align*}
where 
\[
K_{q,\beta}^{(1)}= \frac{(q_1+q_2) ( q_5+ q_6)  \prod _{j=1}^8  | c_{q_j} |(q_j-1)!}{\prod_{1\leq k <l\leq 8} \beta_{kl}!},
\]
and 
\begin{align*}
 \mathcal{D}_q^{(1)}& =\{(\beta_{kl})_{1\leq k <l \leq 8}: \sum_{k \textit{ or } l =j} \beta_{kl}= q_j-1 \text{ for } j= 2,3,4,6,7,8 \\
 &\qquad \text{ and } \sum_{k \textit{ or } l =j} \beta_{kl}= q_j-3 \text{ for } j= 1,5\}.  
\end{align*}
Changing the exponents $\beta_{jk}+1$ in to $\beta_{jk}$ for $(j,k) \in \{ (1,2), (1,3), (1,4), (5,6), (5,7), (5,8)\}$, we can write
\[
\E\left( (\Phi^{(1)} _{n,N})^2 \right)   \leq \frac{1}{n^{4}} \sum_{i_1, \dots ,i_8=1}^n \sum_{q_1, \dots q_8=2}^{N}     \sum_{\beta \in \mathcal{C}_q^{(1)}} L_{q,\beta}^{(1)} \left(\prod_{1 \leq k< l \leq 8} |\rho(i_k-i_l)|^{\beta_{kl}}\right),
\]
where 
\[
L_{q,\beta}^{(1)}= \frac{(q_1+q_) ( q_5+ q_6)  \prod _{j=1}^8  | c_{q_j} |(q_j-1)!}{(\beta_{12}-1)!
(\beta_{13}-1)!(\beta_{14}-1)!(\beta_{56}-1)!(\beta_{57}-1)(\beta_{58}-1)!\prod_{(k,l) \in \mathcal{E}} \beta_{kl}!},
\]
with $\mathcal{E} = \{ (k,l): 1\le k<l \le 8,  (k,l) \not= (1,2), (1,3), (1,4), (5,6), (5,7), (5,8)\}$
and
\[
    \mathcal{C}_q^{(1)}=\{(\beta_{kl})_{1\leq k <l \leq 8}: \sum_{k \textit{ or } l =j} \beta_{kl}= q_j \text{ for } j= 1,\dots,8 \text{ and }  \beta_{12}, \, \beta_{13}, \, \beta_{14}, \beta_{56}, \beta_{57}, \beta_{58} \geq 1\}.
 \]
Then, we obtain
\[
\E \left( (\Phi^{(1)} _{n,N})^2 \right)  \le   \sup_{\beta\in \mathcal{C}^{(11)}_q}  A^{(1)}_{n,\beta}   \sum_{q_1,\dots,q_8=2}^{N}  
 \sum_{\beta \in \mathcal{C}^{(1)}_q} 
|L_{q,\beta}^{(1)}|,
 \]
where 
\[
A^{(1)}_{n,\beta}  = \frac{1}{n^4} \sum_{i_1, \dots, i_8=1}^n \prod_{1 \leq j \leq k \leq 8}|\rho(i_i-i_k)|^{\beta_{jk}}
\]
and the supremum is taken over all sets of  nonnegative integers $\beta_{jk}$, $ 1\le j<k \le 8$,   satisfying
 $\beta_{12}, \, \beta_{13}, \, \beta_{14}, \beta_{56}, \beta_{57}, \beta_{58} \geq 1$,
 $\beta_{jk}\le 2$ for $  1\le j<k \le 8$ and
$$
2 \le  \sum_{ j \, {\rm or} \, k =\ell} \beta_{jk}, \quad {\rm for}  \quad 1\le \ell \le 8.
$$
We need to estimate  $A^{(1)}_{n,\beta}$ and to show that 
\begin{equation}  \label{gr2}
 \sum_{q_1,\dots,q_8=2}^{\infty}  
 \sum_{\beta \in \mathcal{C}^{(1)}_q} 
L_{q,\beta}^{(1)} <\infty.
\end{equation}

\medskip
\noindent
{\it Estimation of $A^{(1)}_{n,\beta}$:} \quad 
We claim that  
 \begin{equation} \label{gr1}
\sup_{\beta} A^{(1)}_{n,\beta} \le  C n^{-1} \left(\sum_{|k| \leq n} |\rho(k)| ^{\frac 32}\right)^4.
 \end{equation}
As in the proof of Theorem \ref{thm2}, we will make use of ideas from graph theory. The exponents $\beta_{jk}$ induce an unordered simple graph on the set of vertices  $V=\{ 1,2,3,4,5,8\}$ by putting an edge between $j$ and $k$ whenever $\beta_{jk }\not=0$. Because  $\beta_{12}, \beta_{13}\ge 1$,  $\beta_{14}\ge 1$, $\beta_{56}\ge 1, \beta_{57}\ge 1$ and  $\beta_{58}\ge 1$, there  are edges connecting the pairs of vertices $(1,2)$, $(1,3)$, $(1,4)$, $(5,6)$,
$(5,7)$ and $(5,8)$. Condition (\ref{bar4}) means that the degree of each vertex is at least $2$.
   Then we consider two cases, depending whether graph is connected or not.
   
   \medskip
   \noindent
   {\it Case 1:} Suppose that  the graph is not connected.  This means that
    $\beta_{jk} =0$ if $j \in \{1,2,3,4\}$ and $k\in \{5,6,7,8\}$ and there is no edge between the sets
   $V_1=\{ 1,2,3,4\}$ and $V_2=\{ 5,6,7,8\}$. Therefore,
    \[
    A^{(1)}_{n,\beta} \le     (A^{(0)}_{n,\beta} )^2,
    \]
    where
     \[
A^{(0)}_{n,\beta}  = \frac{1}{n^2} \sum_{i_1, \dots, i_4=1}^n \prod_{1 \leq j \leq k \leq 4}|\rho(i_i-i_k)|^{\beta_{jk}}
\]
and the   nonnegative integers $\beta_{jk}$, $ 1\le j<k \le 4$,   satisfy
 $\beta_{12}, \, \beta_{13}, \, \beta_{14} \geq 1$,
 $\beta_{jk}\le 2$ for $  1\le j<k \le 4$ and
$$
2 \le  \sum_{ j \, {\rm or} \, k =\ell} \beta_{jk}, \quad {\rm for}  \quad 1\le \ell \le 4.
$$
     As a consequence,
    $\beta_{23}+ \beta_{24} \ge 1$, $\beta_{23}+\beta_{34} \ge 1$ and  $\beta_{24} +\beta_{34} \ge 1$.
   This means that at least two of the indices $\beta_{23}$, $\beta_{24}$ and $\beta_{34}$ is larger or equal to  $1$. Considering the worst case, we can assume that
   $\beta_{23}=1$ and $\beta_{34}=1$. This leads to
   \begin{equation}  \label{ma1}
   A^{(0)}_{n,\beta} \le  n^{-1}      \sum_{ |k_1|, |k_2|, |k_3| \le n}
   | \rho(k_1) \rho(k_2)  \rho(k_3) \rho(k_2-k_1) \rho(k_3-k_2)|  .
   \end{equation}
   Using (\ref{equ6c}) and H\"older's inequality we obtain
    \[
   A^{(0)}_{n,\beta} \le  C n^{-1}  \sum_{|k| \leq n} |\rho(k)|  \le  C n^{-\frac 23}  \left( \sum_{|k| \leq n} |\rho(k)|^{\frac 32} \right)^{\frac 23}.
   \]
 
    \medskip
   \noindent
   {\it Case 2:}  Suppose that  the graph is  connected. This means that there is an edge connecting the sets $V_1$ and $V_2$. Suppose that
   $\beta_{\alpha_0\delta_0} \ge 1$, where $\alpha_0\in \{1,2,3,4\}$ and $\delta_0\in \{5,6,7,8\}$.   We have then $7$ nonzero coefficients $\beta$:   $ \beta_{13}$, $\beta_{13}$, $\beta_{14} $, $\beta_{56}$, $\beta_{57}$, $\beta_{58}$ and $\beta_{\alpha_0\delta_0}$.
   Because all the edges have at least degree $2$, there must be  another  nonzero coefficient $\beta$. Assume it is $\beta_{\alpha_1\delta_1}$.
    Then, the worse case will be when $ \beta_{12} =\beta_{13} = \beta_{14} =\beta_{56} =\beta_{57}= \beta_{58}=\beta_{\alpha_0\delta_0}=\beta_{\alpha_1\delta_1}=1$ and all the other coefficients are zero. Consider the change of variables $i_1-i_2=k_1$, $i_1-i_3 =k_2$, $i_1-i_4=k_3$, $i_5-i_6= k_4$, $i_5-i_7 =k_5$,
    $i_5-i_8=k_6$, $i_{\alpha_0}-i_{\delta_0}= k_7$. Then, it is easy to show that
    $i_{\alpha_1} - i_{\delta_1} = {\bf k} \cdot {\bf v}$, where ${\bf k} = (k_1,\dots, k_5)$ and ${\bf v}$ is a $7$-dimensional vector whose components are  $0$, $1$ or $-1$.   Applying  (\ref{equ6b}) and  H\"older's inequality yields
       \[
   A^{(1)}_{n,\beta} \le   
  C n^{-2}  \left( \sum_{|k| \leq n} |\rho(k)| \right)^{6}    \le  C n^{-2}   \left( \sum_{|k| \leq n} |\rho(k)| ^{\frac 32}\right)^{4}.
   \]
   This completes the proof of (\ref{gr1}).

\medskip
\noindent
{\it Proof of (\ref{gr2}):} \quad  We have
\begin{align*}
\sum_{q_1,\dots,q_8=2}^{\infty}  
 \sum_{\beta \in \mathcal{C}^{(1)}_q}  L^{(1)}_{q,\beta} &= \E \left( \, \left|
 (A(g''')^{(N)})(X_1)  (A(g_1)^{(N)} (X_1))^3  \right.\right. \\
 & \quad \left.\left. +(A(g')^{(N)})(X_1)(A(g'')^{(N)})(X_1) (A(g_1)^{(N)} (X_1))^2 \right|^2 \right).
 \end{align*}
 Applying H\"older's inequality, yields
 \begin{align*}
 \sum_{q_1,\dots,q_8=2}^{\infty}  
 \sum_{\beta \in \mathcal{C}^{(1)}_q}  L^{(1)}_{q,\beta}& \le 2   \| A(g''')^{(N)}\|_{L^8(\R ,\gamma)}^2  \| A(g_1)^{(N)}\|_{L^8(\R ,\gamma)}^6 \\
 & \quad  + 2 \| A(g')^{(N)}\|_{L^8(\R ,\gamma)}^2\| A(g'')^{(N)}\|_{L^8(\R ,\gamma)}^2\| A(g_1)^{(N)}\|_{L^8(\R ,\gamma)}^4.
\end{align*}
By Proposition  \ref{truncated} and our hypothesis, taking the limit as $N$ tends to infinity,  it follows that 
 \begin{align*}
 \sum_{q_1,\dots,q_8=2}^{\infty}  
 \sum_{\beta \in \mathcal{C}^{(1)}_q}  L^{(1)}_{q,\beta}& \le 2   \| A(g''')\|_{L^8(\R ,\gamma)}^2  \| A(g_1)\|_{L^8(\R ,\gamma)}^6 \\
 & \quad  + 2 \| A(g')\|_{L^8(\R ,\gamma)}^2\| A(g'')\|_{L^8(\R ,\gamma)}^2\| A(g_1)\|_{L^8(\R ,\gamma)}^4 <\infty.
\end{align*}

\medskip
\noindent
{\it Case $i=2$}. \quad 
 For  $\mathbb{E}[ |\Phi^{(2)}_{n, N}|^2]$ we have
\begin{align*}
\E\left( (\Phi^{(2)} _{n,N})^2  \right) &= \frac 1{n^4} \sum_{i_1, \dots, i_8=1}^n \sum_{q_1, \dots, q_8 =2}^N
M^{(2)}_q \E \left( H_{q_1-2}(X_{i_1})H_{q_2-2}(X_{i_2}) 
    H_{q_3-1}(X_{i_3}) H_{q_4-1}(X_{i_4}) \right. \\
    &  \left.\quad\times  H_{q_5-2}(X_{i_5})H_{q_6-2}(X_{i_6}) 
    H_{q_7-1}(X_{i_7}) H_{q_8-1}(X_{i_8})\right)\\
    & \quad  \times     \rho(i_1-i_2)\rho(i_1-i_3)\rho(i_2-i_4)   \rho(i_5-i_6)\rho(i_5-i_7)\rho(i_6-i_8),
\end{align*}
where
\[
M^{(2)}_q=  \left( \prod_{j=1}^8 c_{q_j}\right)   9q_1(q_1-1) (q_2-1) q_5(q_5-1)  (q_6-1)).
\]
This yields
\begin{align*}
\E\left( (\Phi^{(2)} _{n,N})^2  \right) & \leq \frac{1}{n^{4}} \sum_{i_1, \dots ,i_8=1}^n \sum_{q_1, \dots q_8=2}^{N}     \sum_{\beta \in \mathcal{D}_q^{(2)}} K_{q,\beta}^{(2)} \left(\prod_{1 \leq k< l \leq 8} |\rho(i_k-i_l)|^{\beta_{kl}}\right)  \\
&\quad \times  |\rho(i_1-i_2)\rho(i_1-i_3)\rho(i_2-i_4)\rho(i_5-i_6)\rho(i_5-i_7)\rho(i_6-i_8) |,
\end{align*}
where 
\[
K_{q,\beta}^{(2)}=  \frac{ 9q_1 q_5 
   \prod_{j=1 }^8 | c_{q_j} | (q_j-1)! }{\prod_{1\leq k <l\leq 8} \beta_{kl}!}
\]
and 
\begin{align*}
 \mathcal{D}_q^{(2)}& =\{(\beta_{kl})_{1\leq k <l \leq 8}: \sum_{k \textit{ or } l =j} \beta_{kl}= q_j-1 \text{ for } j= 3,4,7,8 \\
 &\qquad \text{ and } \sum_{k \textit{ or } l =j} \beta_{kl}= q_j-2 \text{ for } j= 1,2,5,6\}.
\end{align*}
Changing the exponents $\beta_{jk}+1$ in to $\beta_{jk}$ for $(j,k) \in \{ (1,2), (1,3), (2,4), (5,6), (5,7), (6,8)\}$, we can write
\[
\E\left( (\Phi^{(2)} _{n,N})^2  \right)  \leq \frac{1}{n^{4}} \sum_{i_1, \dots ,i_8=1}^n \sum_{q_1, \dots q_8=2}^{N}     \sum_{\beta \in \mathcal{C}_q^{(2)}} L_{q,\beta}^{(2)} \left(\prod_{1 \leq k< l \leq 8} |\rho(i_k-i_l)|^{\beta_{kl}}\right),
\]
where 
\[
L_{q,\beta}^{(2)}= \frac{ 9q_1q_5 \prod _{j=1}^8  | c_{q_j} |(q_j-1)!}{(\beta_{12}-1)!
(\beta_{13}-1)!(\beta_{24}-1)!(\beta_{56}-1)!(\beta_{57}-1)(\beta_{68}-1)!\prod_{(k,l) \in \mathcal{E}} \beta_{kl}!}\ ,
\]
with $\mathcal{E} = \{ (k,l): 1\le k<l \le 8,  (k,l) \not= (1,2), (1,3), (2,4), (5,6), (5,7), (6,8)\}$ 
and
\[
    \mathcal{C}_q^{(2)}=\{(\beta_{kl})_{1\leq k <l \leq 8}: \sum_{k \textit{ or } l =j} \beta_{kl}= q_j \text{ for } j= 1,\dots,8 \text{ and }  \beta_{12}, \, \beta_{13}, \, \beta_{24}, \beta_{56}, \beta_{57}, \beta_{6,8} \geq 1\}.
 \]
Then, we have
\[
\E\left (\Phi^{(2)} _{n,N})^2  \right) \le   \sup_{\beta\in \mathcal{C}^{(12)}_q}  A^{(2)}_{n,\beta}   \sum_{q_1,\dots,q_8=2}^{N}  
 \sum_{\beta \in \mathcal{C}^{(2)}_q} 
|L^{(2)}_{q,\beta}|,
 \]
where 
\[
A^{(2)}_{n,\beta}  = \frac{1}{n^4} \sum_{i_1, \dots, i_8=1}^n \prod_{1 \leq j \leq k \leq 8}|\rho(i_i-i_k)|^{\beta_{jk}}
\]
and the supremum is taken over all sets of  nonnegative integers $\beta_{jk}$, $ 1\le j<k \le 8$,   satisfying
 $\beta_{12}, \, \beta_{13}, \, \beta_{24}, \beta_{56}, \beta_{57}, \beta_{68} \geq 1$,
 $\beta_{jk}\le 2$ for $  1\le j<k \le 8$ and
$$
2 \le  \sum_{ j \, {\rm or} \, k =\ell} \beta_{jk}, \quad {\rm for}  \quad 1\le \ell \le 8.
$$

We need to estimate  $A^{(2)}_{n,\beta}$ and to show that 
\begin{equation}  \label{a4}
 \sum_{q_1,\dots,q_8=2}^{\infty}  
 \sum_{\beta \in \mathcal{C}^{(2)}_q} 
L_{q,\beta}^{(2)} <\infty.
\end{equation}

\medskip
\noindent
{\it Estimation of $A^{(2)}_{n,\beta}$:} \quad 
We claim that  
 $$
\sup_{\beta} A^{(2)}_{n,\beta} \le  C n^{-1} \left(\sum_{|k| \leq n} |\rho(k)| ^{\frac 32}\right)^4.
 $$
As in the proof of Theorem \ref{thm2}, we will make use of ideas from graph theory. The exponents $\beta_{jk}$ induce an unordered simple graph on the set of vertices  $V=\{ 1,2,3,4,5,8\}$ by putting an edge between $j$ and $k$ whenever $\beta_{jk }\not=0$. Because  $\beta_{12}\ge 1$, $\beta_{13}\ge 1$,  $\beta_{24}\ge 1$, $\beta_{56}\ge 1, \beta_{57}\ge 1$ and  $\beta_{68}\ge 1$, there  are edges connecting the pairs of vertices $(1,2)$, $(1,3)$, $(2,4)$, $(5,6)$
$(5,7)$ and $(6,8)$. Condition (\ref{bar4}) means that the degree of each vertex is at least $2$.
   Then we consider two cases, depending whether graph is connected or not.
   
   \medskip
   \noindent
   {\it Case 1:} Suppose that  the graph is not connected.  This means that
    $\beta_{jk} =0$ if $j \in \{1,2,3,4\}$ and $k\in \{5,6,7,8\}$ and there is no edge between the sets
   $V_1=\{ 1,2,3,4\}$ and $V_2=\{ 5,6,7,8\}$. Therefore,
    \[
    A^{(2)}_{n,\beta} \le     (A^{(0)}_{n,\beta} )^2,
    \]
    where
     \[
A^{(0)}_{n,\beta}  = \frac{1}{n^2} \sum_{i_1, \dots, i_4=1}^n \prod_{1 \leq j \leq k \leq 4}|\rho(i_i-i_k)|^{\beta_{jk}}
\]
and the nonnegative integers $\beta_{jk}$, $ 1\le j<k \le 4$,   satisfy
 $\beta_{12}, \, \beta_{13}, \, \beta_{24} \geq 1$,
 $\beta_{jk}\le 2$ for $  1\le j<k \le 4$ and
$$
2 \le  \sum_{ j \, {\rm or} \, k =\ell} \beta_{jk}, \quad {\rm for}  \quad 1\le \ell \le 4.
$$
     As a consequence,
    $\beta_{23}+ \beta_{34} \ge 1$ and  $\beta_{14}+\beta_{34} \ge 1$.
   This means $\beta_{34} \ge 1$ or both $ \beta_{23}$ and $\beta_{14}$ are  larger or equal than one.  There are two possible cases:
   \begin{itemize}
   \item[(i)] Suppose  $\beta_{34} \ge 1$,
     Considering the worst case, we can assume that
 $\beta_{34}=1$. Then,  applying  (\ref{equ6a}) and H\"older's inequality, we obtain
   \[
   A^{(0)}_{n,\beta} \le  n^{-1}      \sum_{ |k_1|, |k_2|, |k_3| \le n}
   | \rho(k_1) \rho(k_2)  \rho(k_3) \rho(k_1+k_3-k_2)  |   \le  n^{-1} \left(\sum_{|k| \leq n} |\rho(k)| ^{\frac 43}\right)^3.
   \] 
   By H\"older's inequality, we can show that
   \[
     ( A^{(0)}_{n,\beta})^2 \le C n^{-1} \left(\sum_{|k| \leq n} |\rho(k)| ^{\frac 32}\right)^4.
     \]
   \item[(ii)] Suppose $ \beta_{23}\ge 1$ and $\beta_{14}\ge 1$. Then,
      \[
   A^{(0)}_{n,\beta} \le  n^{-1}      \sum_{ |k_1|, |k_2|, |k_3| \le n}
   | \rho(k_1) \rho(k_2)  \rho(k_3) \rho(k_1+k_3)  \rho(k_1-k_2) |,
   \] 
   and this case can be treated as (\ref{ma1}).
   \end{itemize}
 
    \medskip
   \noindent
   {\it Case 2:}  Suppose that  the graph is  connected. This means that there is an edge connecting the sets $V_1$ and $V_2$. Suppose that
   $\beta_{\alpha_0\delta_0} \ge 1$, where $\alpha_0\in \{1,2,3,4\}$ and $\delta_0\in \{5,6,7,8\}$.   We have then $7$ nonzero coefficients $\beta$:   $ \beta_{12}$, $\beta_{13}$, $\beta_{24} $, $\beta_{56}$, $\beta_{57}$, $\beta_{68}$ and $\beta_{\alpha_0\delta_0}$.
   Because all the edges have at least degree $2$, there must be  another  nonzero coefficient $\beta$. Assume it is $\beta_{\alpha_1\delta_1}$.
    Then, the worse case will be when $ \beta_{12} =\beta_{13} = \beta_{24} =\beta_{56} =\beta_{57}= \beta_{68}=\beta_{\alpha_0\delta_0}=\beta_{\alpha_1\delta_1}=1$ and all the other coefficients are zero. Consider the change of variables $i_1-i_2=k_1$, $i_1-i_3 =k_2$, $i_2-i_4=k_3$, $i_5-i_6= k_4$, $i_5-i_7 =k_5$,
    $i_6-i_8=k_6$, $i_{\alpha_0}-i_{\delta_0}= k_7$. Then, it is easy to show that
    $i_{\alpha_1} - i_{\delta_1} = {\bf k} \cdot {\bf v}$, where ${\bf k} = (k_1,\dots, k_5)$ and ${\bf v}$ is a $7$-dimensional vector whose components are  $0$, $1$ or $-1$.   Then, using (\ref{equ6b})   and H\"older's inequality, we obtain
       \[
   A^{(1)}_{n,\beta} \le   
  C n^{-2}  \left( \sum_{|k| \leq n} |\rho(k)| \right)^{6}    \le  C n^{-2}   \left( \sum_{|k| \leq n} |\rho(k)| ^{\frac 32}\right)^{4}.
   \]

\medskip
\noindent
{\it Proof of (\ref{a4}):} \quad  We have
\begin{align*}
\sum_{q_1,\dots,q_8=2}^{\infty}  
 \sum_{\beta \in \mathcal{C}^{(2)}_q}L^{(2)}_{q,\beta} &=9 \E\left( \left|
A(g'')^{(N)}(X_1) A(g_1')(X_1) A(g_1)(X_1)^2
   \right|^2 \right)\\
   &\le  9 \| A(g'')^{(N)} \| _{L^8(\R,\gamma)} ^2\| A(g'_1)^{(N)} \| _{L^8(\R,\gamma)} ^2\| A(g_1)^{(N)} \| _{L^8(\R,\gamma)} ^4,
    \end{align*}
    which converges as $N\rightarrow \infty $ to
    \[
    9 \| A(g'') \| _{L^8(\R,\gamma)} ^2\| A(g'_1) \| _{L^8(\R,\gamma)} ^2\| A(g_1) \| _{L^8(\R,\gamma)} ^4 <\infty.
    \]

\medskip
\noindent
{\it Case $i=3$.}
The term  $\mathbb{E}[ |\Phi^{(3)}_{n, N}|^2]$   can be handled in a similar way and we omit the details. 
\end{proof}

\end{document}